\date{Juin 2012}
\title{Les espaces de Berkovich sont modérés}
\author{Antoine DUCROS}
\address{Institut de mathématiques de Jussieu \\
\&~Institut universitaire de France\\
Université Pierre et Marie Curie\\
 Case 247\\
 4, place Jussieu \\
 F-75252 Paris Cedex 05}
\email{ducros@math.jussieu.fr}
\renewcommand{\epsilon}{\varepsilon}
\newcommand{\ahat}[1]{\widehat{{\mathbb A}^1_{#1}}}
\newcommand{\phat}[1]{\widehat{{\mathbb P}^1_{#1}}}
\newcommand{\abs}[1]{\mathopen |#1\mathclose |}
\renewcommand{\geq}{\geqslant}
\renewcommand{\leq}{\leqslant}
\newcommand{\zero}{^\circ}
\newcommand{\zeroo}{^{\circ\circ}}
\renewcommand{\phi}{\varphi}
\begin{document}
\maketitle

\subsection*{Conventions préalables} Il sera question
tout au long de ce texte de {\em valuations}, pour lesquelles
deux notations sont en concurrence : la notation
additive et la notation multiplicative. Nous avons 
opté pour la notation
multiplicative, naturelle en géométrie
de Berkovich. Une valuation sur 
un corps $k$ consiste donc en la donnée
: d'un groupe abélien ordonné $G$ noté
multiplicativement d'élément neutre 1 ; et d'une application 
$\abs .$ de $k$ 
vers $G_0:=G\cup\{0\}$ (où $0$ est 
absorbant, et plus petit que tout élément de $G$)
telle que $\abs 0=0, \abs 1=1$, $\abs {ab}=\abs a \cdot \abs b$
et 
$\abs{a+b}\leq \max (\abs a, \abs b)$
pour tout $(a,b)\in k^2$ ; {\em nous
n'exigeons pas que~$|k|=G_0$.}
L'anneau de la valuation $\abs .$ est 
égal à $\{x\in k, \abs x\leq 1\}$ ; on le notera $k\zero$. 
Son idéal maximal est $k\zeroo:=\{x\in k, \abs x <1\}$ ; 
son corps résiduel $k\zero/k\zeroo$ sera noté $\tilde k$. 
Un {\em corps valué} est un corps muni d'une valuation ; 
un {\em corps ultramétrique} est un corps muni d'une valeur absolue
ultramétrique, c'est-à-dire encore d'une valuation à valeurs réelles. 

\section*{Introduction}

À la fin des années quatre-vingt, Vladimir Berkovich a proposé
une nouvelle approche de la géométrie analytique 
 ultramétrique 
(\cite{berkovich1990}, \cite{berkovich1993} ; {\em cf.} 
aussi l'exposé \cite{ducros2007b} de ce séminaire).
L'un de ses traits
distinctifs est qu'elle fournit des
espaces ayant d'excellentes propriétés {\em topologiques}, 
qui se sont avérées très utiles pour des raisons techniques, 
mais aussi psychologiques dans la mesure où 
elles sont souvent remarquablement conformes
à l'intuition classique : par exemple, 
le disque unité fermé est, dans ce contexte, une
partie compacte
et à bord non vide de la
droite affine. 

La connaissance de ces
propriétés topologiques a
récemment 
progressé de 
manière considérable grâce
à l'article \cite{hrushovski-loeser2010}
de Ehud Hrushovski et François Loeser, 
auquel ce texte est consacré. Avant d'en présenter
les grandes lignes, nous allons 
succinctement 
exposer l'état de l'art antérieur à sa parution.
On peut, grossièrement, classer les propriétés
qui étaient connues jusqu'alors en trois catégories. 

\medskip
1) {\em Celles qui relèvent de la topologie générale}. Les
espaces de Berkovich sont localement compacts,
localement connexes par arcs, et de dimension
topologique finie lorsqu'ils sont compacts. Elles ont 
été établies par Berkovich lui-même
(pour l'essentiel dans~\cite{berkovich1990})
 aux débuts de la
théorie. Leurs preuves reposent sur des 
arguments très généraux, 
tel le théorème de
Tychonoff, ainsi que sur une étude {\em explicite} 
du disque unité.

2) {\em Celles qui relèvent de la modération\footnote{Nous ne
chercherons pas ici à définir précisément le terme «modération». 
Il est à prendre dans une acception assez vague, dans l'esprit 
d'{\em Esquisse d'un programme} : il évoque une 
certaine forme de finitude,
ainsi que l'absence de pathologies.}
homotopique.} Elles sont là encore dues
à Berkovich, qui les a prouvées
dans l'article \cite{berkovich1999}. 
Soit $X$ un espace
analytique compact sur un corps
ultramétrique complet $k$.
Supposons que $X$
admet un modèle formel
polystable $\mathfrak X$ sur 
$k\zero$. La combinatoire des singularités
de la fibre spéciale 
${\mathfrak X}\otimes \tilde k$
de~${\mathfrak X}$ peut être codée par 
un polytope $P$ de dimension
inférieure ou égale à celle de $X$ ; Berkovich démontre, 
en se ramenant par localisation étale
à une situation torique qui se
traite à la main, 
que $P$
est homéomorphe 
à un fermé de $X$,
sur lequel $X$ se rétracte par 
déformation. 

À l'aide des altérations de de Jong, 
il en déduit que si la valeur absolue de $k$
 n'est pas triviale, 
 tout espace
$k$-analytique lisse est localement contractile ; 
il montre
plus généralement que c'est le cas de tout espace
$k$-analytique
{\em localement isomorphe à un domaine strictement
$k$-analytique d'un espace $k$-analytique lisse}. 
Donnons quelques très brèves explications
sur les termes employés, 
en
renvoyant 
le lecteur en quête de définitions détaillées
au texte fondateur \cite{berkovich1993}
de
Berkovich. Les {\em domaines
analytiques} d'un espace analytique $X$
sont des sous-ensembles de $X$ qui ont une
structure {\em canonique} d'espace analytique. 
Parmi eux, on trouve entre autres les ouverts de $X$
et certains compacts 
intéressants mais pas, en général, les fermés
de Zariski de $X$ (qui peuvent admettre
plusieurs structures analytiques, à cause
des phénomènes de nilpotence). Quant à 
l'adverbe «strictement»,
il fait 
référence à une condition sur les paramètres de définition. 
Illustrons  ces notions
par un exemple : pour tout $n$-uplet
${\bf r}=(r_1,\ldots, r_n)$ de réels strictement positifs,
le polydisque fermé
${\mathbb D}_{\bf r}$
de polyrayon $\bf r$ est un domaine
analytique 
compact 
de l'espace affine analytique 
${\mathbb A}^{n,\rm an}_k$,
et ${\mathbb D}_{\bf r}$
est strictement $k$-analytique si et seulement
si les $r_i$ appartiennent à $|k^*|^{\mathbb Q}$.
Mentionnons incidemment que
${\mathbb A}^{n,\rm an}_k$ est lisse, mais 
pas ${\mathbb D}_{\bf r}$ car ce dernier a un bord non vide.

\medskip
3) {\em Celles qui relèvent de la modération
en matière de composantes
connexes}. Il y en a essentiellement deux, l'une portant
sur les «parties semi-algébriques» de l'analytifié d'une variété
algébrique, et l'autre sur certaines «familles réelles» d'espaces
analytiques. 
 
- {\em Modération des ensembles semi-algébriques}. Soit $X$ une variété algébrique sur un corps ultramétrique 
complet $k$. Une partie $V$ de son analytifié $X^{\rm an}$ est 
dite {\em semi-algébrique} si elle peut être définie,
localement pour la topologie
de Zariski sur $X^{\rm an}$, par une combinaison booléenne  finie
d'inégalités de la forme $\abs f \Join\lambda \abs g$,
où $f$ et $g$ sont
des fonctions polynomiales, où $\lambda\in \mathbb{R}_+$, 
et où le symbole $\Join$
appartient à $\{<,>,\leq,\geq\}$. 

Toute partie semi-algébrique de $X^{\rm an}$ a un nombre fini
de composantes connexes, elles-mêmes 
semi-algébriques. 
Ce résultat a été établi
 dans \cite{ducros2003} par l'auteur\footnote{Dans \cite{ducros2003}
 seul le cas des variétés
 affines est considéré, mais il est immédiat
 qu'il entraîne le cas général.}  ; 
la preuve repose sur
différents résultats difficiles 
d'algèbre commutative normée, reliant
les propriétés d'une algèbre affinoïde
à celles de sa réduction, qui sont 
dus
à Grauert et Remmert d'une part
(\cite{grauert-remmert1966}), 
à Bosch d'autre part (\cite{bosch1977}). 

- {\em Modération en famille réelle}. Soit $X$ un espace analytique compact et 
soit $f$ 
une fonction analytique sur $X$. Pour tout $\epsilon\geq 0$, notons
$X_\epsilon$ l'ensemble des $x\in X$ tels
que $|f(x)|\geq \epsilon$. Il existe une partition finie
de $\mathscr P$ de ${\mathbb R}_+$ en intervalles,
possédant la propriété suivante : pour tout $I\in \mathscr P$
et tout couple $(\epsilon, \epsilon')$ d'éléments 
de $I$ tel que $\epsilon \leq \epsilon'$, l'application naturelle
$\pi_0(X_{\epsilon '})\to \pi_0(X_\epsilon)$
est bijective. Ce théorème 
a été prouvé par Jérôme Poineau dans~\cite{poineau2008},
à l'aide de deux résultats 
de désingularisation : le 
«théorème de la fibre
réduite» (forme affaiblie du théorème de réduction semi-stable, 
démontrée en toute dimension par Bosch, L\"utkebohmert 
et Raynaud dans \cite{bosch-lutk-ray1995}), et
un théorème
d'élimination de la ramification sauvage, établi
par Epp dans~\cite{epp1973}. 
Indiquons que
le 
cas particulier où
la fonction~$f$
est inversible avait été traité antérieurement par Abbes et Saito dans
leur travail~\cite{abbessaito2002}
sur la ramification sauvage.

\medskip
Mentionnons pour conclure ce survol
une interprétation conceptuelle de l'espace topologique
sous-jacent à un espace analytique -- ou à tout le moins
de sa cohomologie. 
Soit $k$ un corps ultramétrique complet,
soit $k^a$ une clôture algébrique de $k$, 
et soit $X$ une $k$-variété algébrique. La
cohomologie de l'espace topologique $X^{\rm an}_{\widehat{k^a}}$ 
a alors tendance à s'identifier de manière Galois-équivariante, 
lorsque cela a un sens, à la partie
{\em de poids zéro} de la cohomologie «usuelle» de $X_{k^a}$. 
Pour plus de précisions, le lecteur
intéressé pourra consulter : l'article \cite{berkovich2000} de Berkovich, qui traite le
cas d'un corps local,
d'un corps de type fini sur son sous-corps premier, et de $\mathbb C$
(la valeur absolue dans ces deux derniers cas est {\em triviale}) ; 
 et les articles \cite{berkovich2009} et \cite{nicaise2011}
(le premier de Berkovich, le second de Johannes Nicaise), 
qui traitent le cas du corps ${\mathbb C}((t))$ muni d'une valeur
absolue $t$-adique,
en lien avec les familles à un 
paramètre de variétés complexes.

\medskip
{\em Les résultats de Hrushovski et Loeser.} On 
peut les résumer en disant qu'ils étendent
2) et 3) à 
{\em toutes les situations provenant de la géométrie algébrique
(projective).}
Plus précisément, soit $k$ un corps ultramétrique
complet, soit $X$ une $k$-variété
algébrique {\em quasi-projective}, et soit $V$ une partie
semi-algébrique de $X^{\rm an}$. Hrushovski et Loeser démontrent
entre autres les assertions suivantes. 

\medskip
\begin{itemize}
\item[A)] {\em Modération globale}. Il existe un fermé
$S$ de $V$ homéomorphe à un complexe
simplicial fini de
dimension inférieure ou égale
à celle de $X$, et une rétraction 
par déformation de $V$ sur $S$
telle que tous les points d'une trajectoire donnée
aient même image terminale sur~$S$. 

\item[B)] {\em Modération locale.} L'espace topologique
$V$ est localement contractile. 

\item[C)] {\em Modération en famille algébrique.} Soit $Y$
une $k$-variété algébrique et soit $f$ un morphisme
de $X$ vers $Y$. L'ensemble des types d'homotopie des fibres
de $f^{\rm an}|_V: V\to Y^{\rm an}$ est fini. 

\item[D)] {\em Modération en famille réelle.} 
Soit $g\in \mathscr O_X(X)$. 
Pour tout $\epsilon\geq 0$, notons
$V_\epsilon$ l'ensemble
des $x\in V$ tels que $\abs{g(x)}\geq \epsilon$.
 Il existe une partition finie
de $\mathscr P$ de ${\mathbb R}_+$ en intervalles,
possédant la propriété suivante : pour tout $I\in \mathscr P$
et tout couple $(\epsilon, \epsilon')$ d'éléments 
de $I$ tel que $\epsilon \leq \epsilon'$, l'inclusion 
$V_{\epsilon'}\subset V_\epsilon$ est une équivalence
homotopique. 

\end{itemize}

\medskip
{\em Commentaires}. En ce qui concerne A),
soulignons que même
dans le cas où $X$ est projective et lisse et où $V=X^{\rm an}$, 
cette assertion n'était jusqu'alors connue que lorsque
$X$ admet un modèle polystable, {\em cf.} les résultats globaux
mentionnés en 2). 

En ce qui concerne B), notons que $V$
possède une base de voisinages qui sont
des parties semi-algébriques de $X^{\rm an}$ ; cela
permet de déduire~B) 
de A) et de 
la locale contractibilité
des complexes simpliciaux finis.

Par ailleurs, 
tout domaine analytique de $X^{\rm an}$ 
possède une base d'ouverts qui sont des parties
semi-algébriques de $X^{\rm an}$. 
Il s'ensuit que tout espace analytique
localement isomorphe à un domaine
analytique (de l'analytifié) 
d'une variété algébrique est
localement contractile.
Comme un espace analytique 
lisse est localement isomorphe
à un ouvert d'une variété algébrique lisse, 
on retrouve comme cas particulier les résultats 
locaux mentionnés plus haut en 2), 
de surcroît un peu étendus : il n'y a
plus besoin de supposer 
que la valeur absolue de $k$ est non
triviale, ni que les domaines en jeu sont 
{\em strictement} analytiques. 

\medskip
{\em Les méthodes de Hrushovski et Loeser.} L'intérêt 
de leur travail réside non seulement dans les résultats que nous
venons d'évoquer, mais aussi dans les méthodes 
totalement inédites qu'ils 
emploient 
pour aborder ce type
de questions. Elles reposent entièrement 
sur la {\em théorie des modèles des corps non trivialement valués
algébriquement clos} (dont nous utiliserons
l'acronyme anglophone ACVF),
et plus précisément sur des progrès
récents en la matière
dus à Haskell, Hrushovski
 et Macpherson 
(\cite{haskell-hrush-macph2006},~\cite{haskell-hrush-macph2008}).  
Contrairement
à celles qui ont été utilisées 
dans les preuves évoquées aux points 1), 2) et 3)
plus haut, elles ne font {\em aucun} appel à l'étude de la 
réduction des variétés algébriques ou des espaces
analytiques, ni à travers les théorèmes 
sur la réduction des algèbres affinoïdes, 
ni à travers ceux qui assurent
l'existence de modèles pas trop singuliers 
(réduction semi-stable, altérations de de Jong, 
fibre réduite). 

Dans 
\cite{haskell-hrush-macph2006}, Haskell, Hrushovski 
et Macpherson
introduisent une version
enrichie du langage des corps valués,
dans laquelle la théorie
ACVF «élimine les imaginaires», ce qui veut dire
que
le quotient d'un foncteur 
définissable par une relation d'équivalence
définissable est encore un foncteur
définissable. Nous allons donner un exemple d'un tel quotient
qui jouera un rôle important dans la suite ; on utilise à partir
de maintenant le langage de \cite{haskell-hrush-macph2006}.
Soit $(k,\abs .\colon k^*\to G)$ 
un corps valué, et soit $k^a$
une clôture algébrique de $k$
munie d'un prolongement $\abs .\colon (k^a)^*\to G^{\mathbb Q}$. Soit $\mathsf M$ la catégorie
des modèles $F$ de ACVF 
(c'est-à-dire des corps non trivialement
valués et algébriquement clos), 
munis d'un diagramme 
commutatif $$\diagram F\rto^{\abs .}&\abs F\\k^a\uto\rto^{\abs.}
&G_0^{\mathbb Q}\uto\enddiagram$$ dans lequel les flèches verticales
sont des plongements. Soit $T$ le foncteur qui envoie
$F\in \mathsf M$
sur le groupe $$\left\{\left(\begin{array}{cc}
a&b\\0&a\end{array}\right)\right\}_{a\in F^*, b\in F}.$$

Il est $k$-définissable ; on note
$T'$ le sous-foncteur 
$k$-définissable $F\mapsto T(F)\cap {\rm GL_2}(F\zero)$ de 
$T$. 
Le foncteur 
quotient $T/T'$ est 
$k$-définissable par élimination
des imaginaires dans la variante de ACVF
utilisée. On vérifie
immédiatement que l'application qui envoie la
matrice $$\left(\begin{array}{cc}
a&b\\0&a\end{array}\right)$$ sur la boule
de centre $b$ et de rayon $\abs a$ induit une
bijection, fonctorielle
en $F$, entre $T(F)/T'(F)$ et l'ensemble 
$B(F)$ des boules fermées
de $F$ dont le rayon appartient 
à $\abs{F^*}$. Ainsi, $B$ est $k$-définissable. 
L'ensemble $\overline B(F)$ des
boules fermées de $F$ dont le rayon appartient 
à $\abs F$ (le rayon nul est maintenant
autorisé)
est isomorphe à 
$B(F)\coprod F$, fonctoriellement en $F$. 
Le foncteur $\overline B$
est donc lui aussi $k$-définissable. 

Supposons que $G=\mathbb R$ et que
$k$ est complet, et soit $F$ un corps
ultramétrique appartenant à $\mathsf M$.
On dispose alors d'une flèche  $\overline B(F)\to  {\mathbb A}^{1,\rm an}_k$, 
qui est surjective si 
$F$ est maximalement complet
(cela signifie que toute famille décroissante de boules 
fermées de $F$ a une intersection non vide). Si $F=k$ la flèche
$\overline B(F)\to  {\mathbb A}^{1,\rm an}_F$ est injective, et bijective
si $F$ est maximalement complet.  

\medskip
Ce foncteur $\overline B$ peut s'interpréter
comme un cas particulier
d'une construction très générale de Hrushovski et Loeser, 
qui est au cœur de leur article
et que nous allons maintenant présenter.
On ne suppose plus que $G=\mathbb R$ 
(ni que $k$ est complet). 
Soit $X$ une $k$-variété algébrique ; on identifie
$X$ au foncteur $F\mapsto X(F)$ de
$\mathsf M$ vers $\mathsf {Ens}$.

Soit $U$ un sous-foncteur $(k,G)$-définissable de $X$ ; 
il est donné, localement pour la topologie de Zariski de $X$, 
par une combinaison booléenne finie
d'inégalités de la forme $\abs f \Join\lambda \abs g$,
où $f$ et $g$ sont
des fonctions polynomiales
à coefficients dans $k$, où $\lambda\in G$, 
et où le symbole $\Join$
appartient à $\{<,>,\leq,\geq\}$. 
Si $G={\mathbb R}$ et
 si $k$ est complet, les formules
 qui décrivent $U$ définissent 
 sans ambiguïté une partie
 semi-algébrique $U^{\rm an}$
 de $X^{\rm an}$, et toute partie
 semi-algébrique de $X^{\rm an}$
 est de cette forme. 

Hrushovski et Loeser notent $\widehat U$ 
le foncteur qui envoie un corps $F\in \mathsf M$
sur l'ensemble des {\em types stablement dominés 
$F$-définissables situés sur $U$}. Nous ne donnerons
pas dans cette
introduction la définition de type stablement dominé ; 
c'est l'une des notions centrales
du livre \cite{haskell-hrush-macph2008}, et 
nous en disons quelques mots dans ce texte au paragraphe~\ref{typstabdom}.
La formation de $\widehat U$ est fonctorielle en $U$, 
pour les applications définissables. Le foncteur~$\widehat U$
jouit des propriétés suivantes.

\medskip
a)  Le foncteur $\widehat U$ est muni d'un plongement naturel 
$U\hookrightarrow \widehat U$. 

b) Le foncteur $\widehat U$ est~$(k,G)$-pro-définissable, et~$(k,G)$-définissable
si $X$ est de dimension $\leq 1$. 

c) Pour tout corps $F\in \mathsf M$ et toute fonction polynomiale
$g$ à coefficients dans $F$ sur un ouvert de Zariski $V$ de $X_F$,
la fonction $\abs g : V(F)\cap U(F)\to \abs F$ se prolonge
en une fonction $\abs g: \widehat V (F)\cap \widehat U(F)\to \abs F$. 
On munit $\widehat U(F)$ de la topologie la plus grossière
pour laquelle $\widehat V(F)\cap \widehat U(F)$ est ouvert
et $\abs g$ continue pour tout $(V,g)$ comme
ci-dessus (la topologie sur $\abs F$
est celle de l'ordre). 
Si $F\subset F'$,
la flèche
$\widehat U(F)\to \widehat U(F')$ est une injection ; 
elle est ouverte sur son image, mais pas continue en 
général. 

d)  Pour tout $F\in \mathsf M$, le
plongement canonique $U(F)\hookrightarrow \widehat U(F)$ est
 un homéomorphisme sur son image, laquelle est dense. 
 
e)  Supposons que $X$ soit propre et que $U$ soit défini Zariski-localement
par une combinaison booléenne positive d'inégalités {\em larges}. Pour tout 
$F\in \mathsf M$, l'espace topologique $\widehat U(F)$ est alors
{\em définissablement} compact (cela signifie {\em grosso modo}
que tout
ultrafiltre définissable, en un sens convenable, y converge). 
 
f) Supposons que $G={\mathbb R}$
et que $k$ est complet,
 et soit $F$ un corps ultramétrique appartenant à 
$\mathsf M$. On dispose d'une application continue 
$\widehat U(F)\to U^{\rm an}$, qui est une surjection 
topologiquement propre
si $F$ est maximalement complet. Lorsque $F=k$, 
la flèche 
$\widehat U(F)\to U^{\rm an}$ est injective
(elle a pour image les points de $U^{\rm an}$ qui sont 
définissables, en un sens à préciser), et c'est un homéomorphisme
si $F$ est maximalement complet. 

\medskip
{\em Un exemple : le cas où $U=X={\mathbb A}^1_k$.} Le foncteur
$\widehat U$ est alors égal à $\overline B$. On
a vu plus haut que $\overline B$ est définissable, 
ce qui redonne b) dans ce cas particulier. 
Soit 
$F\in \mathsf M$.
Le plongement mentionné en a) est celui qui envoie un élément 
de $U(F)=F$ sur la boule singleton correspondante. La topologie
mentionnée en c) est telle que si $x\in F$, l'application
qui associe à $r\in \abs F$ la boule fermée de centre 
$x$ et de rayon $r$ induit un homéomorphisme de $\abs F$
sur son image. Quant à l'application mentionnée en~f), 
elle coïncide avec la flèche 
$\overline B(F)\to {\mathbb A}^{1,\rm an}_k$
évoquée précédemment. 

\medskip
L'essentiel du travail de Hrushovski et Loeser est 
consacré à l'étude des foncteurs de la forme $\widehat U$. 
Ils sont relativement proches des espaces de Berkovich
en vertu de f) ; mais ils sont en vertu
de c) plus maniables
du point de vue de la théorie des modèles, notamment
pour ce qui concerne
la définissabilité
et la modération. 
Ils apparaissent plus précisément
comme les 
objets d'une géométrie 
{\em qui code l'ensemble des questions de modération 
et de définissabilité en géométrie de
Berkovich}. L'apport majeur de Hrushovski et Loeser est,
en un sens, la mise au jour de cette géométrie ; 
les preuves des assertions A), B), C) et D) ci-dessus 
constituent le premier témoignage de sa fécondité. 
Celles-ci
sont pour l'essentiel 
d'abord établies 
dans le monde des espaces
chapeautés, avant d'être rapatriées à la toute fin 
de l'article (chapitre 13) dans celui des espaces de Berkovich, 
grâce aux liens étroits entre les deux géométries, et notamment
aux applications considérées en~f). 

\medskip
À titre d'exemple,
nous allons décrire plus avant cette
stratégie concernant l'assertion A). Introduisons un 
peu de vocabulaire. On note 
$\Gamma$ (resp. $\Gamma_0$) le
foncteur $F\mapsto \abs{F^*}$ (resp. $F\mapsto \abs F$)
de $\mathsf M$ dans $\mathsf{Ens}$. 
Si $a$ et $b$ sont deux
éléments de $|k|$ avec $a\leq b$, 
le segment $[a;b]$ sera considéré comme un 
sous-foncteur de $\Gamma_0$ 
envoyant $F$ sur
$\{x\in \abs F, a\leq \abs x\leq b\}$. Un {\em segment généralisé} 
est un foncteur qui est une concaténation finie de
segments, avec
identifications des extrémité et origine de deux segments
consécutifs\footnote{On prendra garde qu'un tel foncteur
n'est pas en général définissablement isomorphe à un segment, 
{\em cf}. la remarque~\ref{segmentgen}.}; un 
segment
généralisé possède lui-même deux extrémités.
Nous appellerons
{\em polytope} un sous-foncteur 
de 
$\Gamma_0^n$ (pour un certain $n$)
qui est $k$-définissable ; cela signifie 
qu'il peut être
décrit par une combinaison booléenne d'inégalités 
de la	 forme $a\prod x_i^{n_i}\Join b\prod x_i^{m_i}$, où
$a$ et $b$ appartiennent à $|k|$, les $n_i$ et $m_i$ à $\mathbb N$,
et $\Join$ à $\{\leq, \geq, <,>\}$. 
Si $P$ est un polytope (resp. un segment
généralisé), 
alors pour tout~$F$ appartenant à $ \mathsf M$,
 l'ensemble $P(F)$ est de façon
naturelle un espace topologique (resp. un espace
topologique définissablement compact) ; il existe une notion
naturelle de dimension d'un polytope. 

\medskip
L'avatar chapeauté de l'assertion A), dont celle-ci est déduite, est le suivant. 
{\em On suppose que $X$ est quasi-projective}. Il existe alors :

$\bullet$ un sous-foncteur $S$ de $\widehat U$ ; 

$\bullet$ un polytope $P$ de dimension inférieure
ou égale à celle de $X$,  et un 
isomorphisme~\mbox{$S\simeq P$} définissable 
{\em sur une extension finie\footnote{La pr\'esence de cette extension finie est in\'evitable, 
{\em cf.} la remarque~\ref{gallois}.}
de $k$ contenue dans $k^a$}, tels que la bijection~\mbox{$S(F)\simeq P(F)$} soit un homéomorphisme pour
tout $F\in \mathsf M$ ; 

$\bullet$ un segment généralisé $I$, d'origine $o$ et d'extrémité $e$ ; 

$\bullet$ une transformation naturelle définissable $h: \widehat U \times I \to \widehat U$ 
qui fixe $S$ point par point, induit l'identité au-dessus de $o$
et une rétraction $\widehat U \to S $ au-dessus de $e$,
satisfait les égalités~$h(h(x,t),e)=h(x,e)$ pour tout~$(x,t)$, 
et est telle que $h(F)$ soit continue pour tout $F\in \mathsf M$. 

\medskip
Disons quelques mots de la démonstration. On fixe un
ensemble fini $\mathscr F$ de fonctions définissables
de $\widehat X$ vers $\Gamma_0$ qui
contient la fonction caractéristique de $\widehat U$. 
Dans ce
qui suit, toutes les rétractions, homotopies, déformations, etc.
seront implicitement supposées définissables. On dira qu'une
homotopie $h(.,.)$ {\em préserve} $\mathscr F$ si $\phi \circ h(t,.)=\phi$
pour tout $\phi\in \mathscr F$ et tout $t$. 

Quitte
à agrandir $X$, on peut la supposer projective.
Le but est de déformer $\widehat U$
sur un polytope. On va en réalité 
déformer {\em $\widehat X$ tout entier}
sur  un polytope, en préservant~$\mathscr F$. Cela 
garantira
la stabilité de $\widehat U$ sous l'homotopie construite ; 
que l'image terminale 
de $\widehat U$ soit un polytope résultera
immédiatement d'un argument de définissabilité. Notons qu'il
est important de traiter le cas d'un ensemble $\mathscr F$ quelconque, 
même si le cas où \mbox{$\mathscr F=\{\chi_{\widehat U}\}$}
semble suffire ici : certaines étapes
de la récurrence qui est au cœur
de la preuve
(et dont nous donnons une
description extrêmement 
succincte ci-dessous) requièrent en effet
de savoir préserver un ensemble fini 
arbitraire de
fonctions définissables.

On traite tout d'abord le cas où $X$ est une courbe. Si 
$X={\mathbb P}^1_k$ la démonstration se fait 
plus ou moins «à la main» : dans 
une carte affine standard on construit, en gros, 
l'homotopie requise
en faisant croître le rayon des boules et en 
stoppant lorsqu'il convient (rappelons que 
$\widehat {{\mathbb A}^1_k}$ est l'espace des boules fermées).
Si $X$ est une courbe quelconque, on choisit un morphisme fini
et plat $f:X\to {\mathbb P}^1_k$, et l'on cherche
à construire
une homotopie
sur $\widehat  {\mathbb P}^1_k$
qui se relève sur $\widehat X$ en une homotopie
répondant aux conditions prescrites.
 C'est possible grâce à l'étude directe 
 de ${\widehat{\mathbb P}^1_k}$ déjà menée, et
grâce à
un lemme assurant
que le cardinal des fibres de $\widehat f$ a un comportement 
raisonnable, en tant que fonction à valeurs entières
sur $\widehat{{\mathbb P}^1_k}$. Plus
précisément, on prouve d'abord que le
«sens de variation» de cette fonction
est {\em génériquement} ce qu'on 
attend, puis un argument de définissabilité permet de
passer de ce fait générique à une assertion ferme. 

\medskip
Pour construire la rétraction dans le cas général, 
on procède par récurrence sur la dimension $n$ 
de $X$ (après s'être ramené au cas équidimensionnel). 
Le caractère projectif de $X$ assure l'existence d'une
modification $X'\to X$, qui est un éclatement de centre
un 
sous-schéma de dimension $0$, telle que $X'$ admette une
fibration propre en courbes sur une variété 
projective $Y$ 
purement de dimension $n-1$. Soit $D$ la réunion 
des diviseurs exceptionnels de $X'\to X$. 
On cherche à construire 
une rétraction 
de $\widehat{X'}$ sur un polytope, qui préserve 
$\mathscr F$ et
stabilise
$\widehat D$.  Cela assurera qu'elle
descend à $\widehat X$, chaque composante connexe
de $D$, et partant de $\widehat D$, s'écrasant sur un point. 
Pour garantir la stabilisation de $\widehat D$, il suffit de rajouter
sa fonction caractéristique à l'ensemble $\mathscr F$ ; 
on peut ainsi oublier~$D$. 

Pour rétracter $\widehat{X'}$ sur un polytope
en préservant $\mathscr F$ 
l'idée est, très grossièrement,
d'utiliser la fibration $\widehat{X'}\to \widehat Y$ 
en combinant une rétraction 
convenable de la base
 (fournie par l'hypothèse de récurrence) et une 
 rétraction fibre à fibre bien choisie
 (fournie par le cas des courbes déjà traité, dans sa variante relative.)
Ce procédé 
permet 
de construire une homotopie
$h_0$ ayant les propriétés requises, 
mais qui n'est définie 
que sur une partie
de $\widehat{X'}$ 
de la forme $(\widehat {X'}\setminus\widehat{\Delta})\cup \widehat {D_0}$, 
où $\Delta$
et $D_0$ sont deux diviseurs sur $X'$
(l'un des points délicats est la vérification
de la continuité de $h_0$).
Pour obtenir une homotopie 
$h$ définie sur $\widehat {X'}$ tout entier, 
il faut encore travailler. On commence par faire 
précéder
$h_0$ d'une homotopie $h_1$ dite {\em d'inflation} qui
fixe $\widehat {D_0}$ point par point,
préserve $\mathscr F$ et
éloigne un peu 
de $\widehat \Delta$
les points de $\widehat \Delta-\widehat {D_0}$.
On obtient ainsi une homotopie 
qui est définie sur $\widehat{X'}$
tout entier, préserve
$\mathscr F$ et déforme $\widehat{X'}$
sur un polytope,
mais ce dernier n'est plus nécessairement fixé
point par point pendant toute la durée du processus, 
l'homotopie $h_1$ pouvant le perturber. 
On résout le
problème en faisant suivre la
concaténation de $h_1$ et $h_0$ 
d'une troisième homotopie $h_2$, dont 
l'essentiel
de la construction 
se déroule dans le monde polytopal. 

\medskip
Dans les grandes
lignes que nous venons d'esquisser, la preuve semble
reposer essentiellement sur des idées géométriques. 
Mais les arguments de théorie des modèles y sont 
omniprésents, notamment ceux tournant autour de la 
définissabilité et de la compacité
(au sens modèle-théorique du terme). Notons
qu'à chaque fois que l'on veut
appliquer cette dernière,
il est nécessaire de considérer {\em tous}
les modèles de ACVF contenant le corps $k$. C'est 
pourquoi il est indispensable de faire intervenir
tout au long de la preuve des corps valués {\em quelconques},
même si la motivation initiale porte sur les corps
ultramétriques
complets ; pour 
plus de commentaires sur ce sujet, {\em cf.} \ref{digression} {\em infra}.

\medskip
Pour conclure, mentionnons qu'Amaury Thuillier travaille
actuellement à l'extension des résultats de modération homotopique
de Berkovich (aux espaces non nécessairement lisses et/ou n'admettant
pas nécessairement de modèle polystable) par des méthodes 
plus proches de la stratégie originale de Berkovich : utilisation des 
altérations de de Jong et descente homotopique -- c'est la mise en
œuvre de ce dernier point
qui nécessite de nouvelles idées. 

\medskip
\subsection*{Remerciements}

Je tiens à faire part de toute ma gratitude à 
Ehud Hrushvoski et François Loeser pour toutes les discussions
que j'ai eues avec eux depuis maintenant trois ans au sujet de leur travail. Je suis également extrêmement
redevable à tous les participants du groupe de travail qui s'est tenu sur le sujet de 2009 à 2011 à l'{\em Institut
de mathématiques de Jussieu} : Luc B\'elair,
Elisabeth Bouscaren, Zoé Chatzidakis, Françoise Delon, Deirdre Haskell, 
Pierre Simon, ainsi que Martin Hils ; je sais
particulièrement gré à ce dernier
d'avoir relu très attentivement une première version 
de ce texte et fait un grand nombre
de suggestions qui m'ont permis de l'améliorer
significativement. 

\medskip
Je bénéficiais par ailleurs, lors de la rédaction de ce rapport,
du soutien de l'ANR à travers le projet {\em Espaces de Berkovich} (07-JCJC-0004-CSD5). 

\section{Un peu de théorie des modèles}

\subsection{Langages, formules, énoncés}

{\em Références : } le lecteur intéressé pourra
par exemple consulter~\cite{marker2002}
ou~\cite{tent-ziegler2012}. 

\begin{defi} Un {\em langage} est une collection de symboles comprenant : 

$\bullet$ les symboles logiques usuels, ainsi qu'une liste (dénombrable)
de variables ; 

$\bullet$ des symboles de {\em fonctions}, chacun ayant une arité fixée ; 

$\bullet$ des symboles de {\em relations}, chacun ayant une arité fixée. 

\end{defi}

En général, pour décrire un langage, on se contente de donner la liste
des symboles de fonctions et de relations, les autres étant plus ou moins 
implicites ; les fonctions d'arité $0$ sont le plus souvent
appelées {\em constantes}. Dans ce qui suit,
les langages seront également implicitement
supposés contenir un symbole de relation $=$ d'arité 2. 

Donnons quelques exemples. Le langage $\mathscr L_{\rm ann}$
des anneaux comprend
deux constantes~$0$ et $1$ et deux symboles fonctionnels d'arité 2, à savoir
$+$ et $\times$. Le langage $\mathscr L_{\rm co}$
des corps ordonnés comprend les mêmes symboles,
ainsi qu'un symbole de relation $\leq$ d'arité 2. Le langage 
$\mathscr L_{\rm gao}$ des groupes
abéliens ordonnés (notés multiplicativement, avec adjonction formelle d'un plus petit
élément absorbant) comprend
une constante 1, une constante~0, un symbole de fonction $\times$ d'arité 2,
 et un symbole de
relation $\leq$ d'arité 2. 

Une {\em structure} d'un langage $\mathscr L$ est un ensemble 
non vide $M$
muni : pour chaque symbole fonctionnel $f$
d'arité $n$ de $\mathscr L$, 
d'une fonction $f_M: M^n\to M$ (souvent notée
simplement $f$ s'il n'y a pas d'ambiguïté) ; 
et pour chaque symbole de relation $\mathscr R$ 
d'arité~$n$,
d'une relation à $n$ termes $\mathscr R_M$ (ou simplement $\mathscr R$), 
c'est-à-dire d'un sous-ensemble de $M^n$. On demande que
la relation $=_M$ soit l'égalité usuelle. 
Ainsi, une structure du langage des corps ordonnés
est un ensemble muni de deux éléments distingués $0$ et $1$, 
de deux lois de composition internes $+$ et $\times$, et d'une relation
binaire $\leq$. Une {\em sous-structure} d'une structure $M$
est une partie non vide de $M$ qui est stable par les
(interprétations des) symboles fonctionnels. 
On se permettra d'\'ecrire $N\subseteq M$ pour signifier que
$N$ est une sous-structure de $M$. 

On peut écrire des {\em formules} dans un langage $\mathscr L$ en agençant des symboles
de $\mathscr L$ selon les règles syntaxiques usuelles, en respectant les arités des symboles
de fonction et de relation. Par exemple, 
$$\forall x, \exists y, (y+x)\leq z \;{\rm ou}\;(yt \geq u+1\;{\rm et}\;x=t)$$ est une 
formule du langage $\mathscr L_{\rm co}$.
Une formule peut contenir des variables {\em libres} (qui ne
sont pas quantifiées) ; dans la formule ci-dessus, 
$z, u$ et $t$ sont libres. Une formule sans variable 
libre est appelée un {\em énoncé}. À titre
d'illustration, 
$$\forall x,\exists y, x+y=0$$ est un énoncé 
de $\mathscr L_{\rm ann}$. 
Si $\mathscr L$
est un langage et si $M$ est une structure de $\mathscr L$, tout énoncé
de $\mathscr L$ admet une interprétation dans $M$, laquelle
peut être vraie ou fausse : par
exemple, l'énoncé $\forall x,\exists y, x+y=0$ est vrai dans 
la structure $\mathbb Z$  de $\mathscr L_{\rm ann}$,
 mais faux
dans sa structure $\mathbb N$. 

Une formule $\phi$ de $\mathscr L$
donne naissance, si l'on
remplace certaines de ses variables libres
par des éléments de $M$, à une {\em formule du langage $\mathscr L$ à paramètres
dans $M$} ; lorsqu'on spécialise ainsi 
{\em toutes}
les variables libres de $\phi$,
on obtient un {\em énoncé du langage $\mathscr L$ à paramètres
dans $M$}. Un tel énoncé possède une valeur de vérité dans
$M$, et plus généralement dans toute structure 
$M'\supseteq M$.

\subsection{Langages multisortes}

Les définitions des langages, structures, énoncés et formules
s'étendent dans un contexte un peu plus général, dit {\em multisorte}. 
On se donne un ensemble $\mathscr S$. 
Un langage 
{\em d'ensemble des sortes égal à $\mathscr S$}
se définit comme un langage au sens précédent, 
à ceci près qu'il ne suffit plus de fixer les arités des symboles 
fonctionnels et relationnels : on doit également préciser 
la ou les {\em sortes}
dans lesquels vivent leurs arguments, 
et leurs valeurs pour les fonctions. Une structure $M$
d'un tel
langage est une famille $(\mathcal S(M))_{\mathcal S\in \mathscr S}$
d'ensembles non vides (on dit que $\mathcal S(M)$ est «la partie
de sorte $\mathcal S$», ou plus simplement «la sorte $\mathcal S$», de la structure $M$), 
munie d'une 
interprétation des symboles par de vraies relations
ou fonctions. Les définitions des énoncés et formules
dans ce cadre sont {\em mutatis mutandis} celles du 
paragraphe précédent. 

\begin{exem} Le {\em langage des corps valués} $\mathscr L_{\rm val}$
est un langage 
à trois sortes : $\mathsf F$, $\mathsf R$ et $\Gamma_0$. 
Ses symboles sont : 

$\bullet$ les constantes $0_{\mathsf F}$ et $1_{\mathsf F}$, 
$0_{\mathsf R}$ et $1_{\mathsf R}$, $0_{\Gamma_0}$ et $1_{\Gamma_0}$ (la sorte est indiquée
en indice) ; 

$\bullet$ les fonctions $+_{\mathsf F}$ et $\times_{\mathsf F}$, à deux arguments dans la sorte
$\mathsf F$ et
à valeurs dans la sorte~$\mathsf F$ ; 

$\bullet$ les fonctions $+_{\mathsf R}$ et $\times_{\mathsf R}$, à deux arguments dans 
la sorte 
$\mathsf R$ et
à valeurs dans la sorte~$\mathsf R$~; 

$\bullet$ la fonction $\times_{\Gamma_0}$,  à deux arguments dans la sorte
$\Gamma_0$ et
à valeurs dans la sorte $\Gamma_0$~; 

$\bullet$ la fonction $\abs .$ à un argument,
de la sorte $\mathsf F$ vers la sorte $\Gamma_0$ ; 

$\bullet$ la fonction ${\mathsf{res}}$ à deux arguments, 
de la sorte $\mathsf F$ vers la sorte $\mathsf R$ ; 

$\bullet$ la relation binaire $\leq$ sur la  sorte $\Gamma_0$.

\end{exem}

Soit $(k,\abs . \colon k \to G_0)$ un corps valué
{\em (rappelons
qu'on ne demande pas que $\abs . \colon k\to G_0$ soit surjective)}.
On peut le voir de fa\c con naturelle comme une 
structure de $\mathscr L_{\rm val}$, dont les sortes sont 
$$\mathsf F(k,\abs . \colon k \to G_0)=k, \;\mathsf R(k,\abs . \colon k \to G_0)=\tilde k,
\;{\rm et}\; \Gamma_0(k,\abs . \colon k \to G_0)=G_0.$$ 
Les symboles $0,1,+$ et $\times$ indexés par
les différentes sortes ont le sens que l'on imagine, de même que
la fonction $\abs.$ et la relation $\leq$. Quant à ${\mathsf{res}}$, 
elle envoie un couple $(x,y)$ sur $\widetilde{(x/y)}$ si $\abs x\leq \abs y$ 
et $y\neq 0$
et sur (disons) $0$ sinon.
\begin{rema}\label{convcorpsval}
Au lieu d'écrire «soit $(k,\abs . : k\to G_0)$ un corps valué» nous dirons
souvent plus simplement «soit $(k,G_0)$ un corps valué», et nous 
identifierons $(k,G_0)$ à la structure de $\mathscr L_{\rm val}$
qu'il définit. Lorsque nous écrirons «soit $k$ un corps valué» {\em sans mentionner explicitement $G_0$},
cela signifiera qu'on s'intéresse
au corps valué $(k,\abs k)$ ; autrement dit, 
on voit $k$ comme une structure de $\mathscr L_{\rm val}$
dont les sortes
sont \mbox{$\mathsf F(k)=k$,} $\mathsf R(k)=\tilde k$, et $\Gamma_0(k)=\abs k$. 
\end{rema}

\subsection{Théories}

\begin{defi} Soit $\mathscr L$ un langage, éventuellement multisorte. Une {\em théorie}
$T$ dans le langage $\mathscr L$ est un ensemble d'énoncés de $\mathscr L$
qui est consistant ; cela signifie qu'il existe une structure $M$ de $\mathscr L$
dans laquelle tous les énoncés de $T$ sont vrais. On dit qu'une telle
structure est un {\em modèle} de $T$. \end{defi}

Si $M$ est une structure de $\mathscr L$, l'ensemble
des énoncés vrais dans $M$ est une théorie, qu'on appelle la théorie de
$M$. Voici maintenant quelques exemples moins triviaux qui sont
très fréquemment considérés. 

\medskip
$\bullet$ {\em Dans le langage ${\mathscr L}_{\rm ann}$}. La théorie des corps, qui 
comprend
les énoncés de ${\mathscr L}_{\rm ann}$
vrais dans n'importe quel 
corps, comme par exemple
$\forall x, (x\neq 0)\Rightarrow(\exists y, xy=1)$. On définit de même 
la théorie des corps algébriquement clos,
la théorie des corps algébriquement clos de caractéristique fixée, etc.

$\bullet$ {\em Dans le langage ${\mathscr L}_{\rm co}$}. La théorie des corps
ordonnés ; la théorie des corps
réels clos, c'est-à-dire des corps ordonnés $R$ tels que tout élément positif
de $R$ soit un carré dans $R$ et tels que tout polynôme de degré impair
à coefficients dans $R$ ait une racine dans~$R$. 

$\bullet$ {\em Dans le langage ${\mathscr L}_{\rm gao}$}. La théorie
des groupes abéliens ordonnés ; la théorie des groupes abéliens
ordonnés divisibles non triviaux, dite DOAG ({\em divisible ordered abelian
groups}). 

$\bullet$ {\em Dans le langage ${\mathscr L}_{\rm val}$}. La théorie
des corps valués, qui comprend  les énoncés vrais dans toute
structure $(k,G_0)$. Et la théorie des corps
non trivialement valués algébriquement clos, dite ACVF 
({\em algebraically closed valued fields}),
qui comprend les énoncés vrais dans tout corps
valué et algébriquement clos $k$ tel que
$|k^*|\neq\{1\}$. 

\begin{rema}\label{modegalmod} Soit
$T$ l'une des théories que
l'on vient d'énumérer. On vérifie
que les modèles de $T$
sont {\em exactement} les structures dont elle porte le nom,
car celles-ci sont caractérisées par un ensemble d'énoncés
dans le langage concerné  ;
par exemple, les modèles de ACVF sont exactement
les corps non trivialement valués et algébriquement clos. 
\end{rema}

\medskip
On dit que $T$ {\em élimine les quantificateurs} si pour toute
formule $\phi(x_1,\ldots, x_n)$ dans le langage $\mathscr L$
à variables libres $x_1,\ldots, x_n$, il existe
une formule $\psi(x_1,\ldots, x_n)$ dans le langage $\mathscr L$ 
à variables libres $x_1,\ldots, x_n$  et {\em sans quantificateurs}
telle que $$\forall (x_1,\ldots, x_n), \;\;\; \phi(x_1,\ldots,x_n)\iff \psi(x_1,\ldots, x_n)$$ soit un énoncé
de $T$. 

\begin{lemm}\label{modcomp} 
Supposons que $T$ élimine les quantificateurs, soit $A$ une structure
du langage $\mathscr L$, et soient $M$ et $M'$ deux modèles de $T$ 
tels que $A\subseteq M$ et $A\subseteq M'$.
Soit $\Phi$ un énoncé du langage $\mathscr L$
à paramètres dans $A$. Il est alors vrai dans $M$ si et seulement
si il est vrai dans $M'$.
\end{lemm}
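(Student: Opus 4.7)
Ma stratégie consiste à appliquer l'élimination des quantificateurs pour ramener la question à une assertion sur les formules sans quantificateurs. D'abord, j'écrirais $\Phi$ sous la forme $\phi(a_1, \ldots, a_n)$, où $\phi(x_1, \ldots, x_n)$ est une formule de $\mathscr L$ sans paramètres et où $a_1, \ldots, a_n$ sont des éléments de $A$. L'hypothèse d'élimination des quantificateurs dans $T$ fournit alors une formule sans quantificateurs $\psi(x_1, \ldots, x_n)$ telle que l'énoncé $\forall \bar x, \phi(\bar x) \Leftrightarrow \psi(\bar x)$ appartienne à $T$. Puisque $M$ et $M'$ en sont tous deux des modèles, la valeur de vérité de $\Phi$ dans $M$ (resp. $M'$) coïncide avec celle de $\psi(a_1, \ldots, a_n)$ dans $M$ (resp. $M'$) ; il suffit donc de montrer que $\psi(\bar a)$ a même valeur de vérité dans les deux structures.

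Pour ce faire, je procéderais par récurrence sur la complexité syntaxique de $\psi$. Le cas atomique repose sur l'observation suivante, qui découle directement de la définition d'une sous-structure : pour tout terme $t(x_1, \ldots, x_n)$ de $\mathscr L$ et tous $a_1, \ldots, a_n \in A$, l'élément $t^M(\bar a)$ appartient à $A$ et coïncide avec $t^A(\bar a)$, donc aussi avec $t^{M'}(\bar a)$. L'interprétation de chaque symbole de relation $\mathscr R$ de $\mathscr L$ dans $M$ (resp. $M'$), restreinte à $A$, coïncide avec $\mathscr R_A$ ; par conséquent, toute sous-formule atomique de $\psi$ à paramètres dans $A$ a même valeur de vérité dans $M$, dans $A$ et dans $M'$. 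Le passage aux connecteurs booléens est immédiat puisqu'aucun quantificateur n'apparaît à aucun stade de la récurrence.

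Cette démonstration ne présente pas d'obstacle conceptuel réel : la seule subtilité est la prise de conscience que l'élimination des quantificateurs est précisément l'ingrédient qui transforme une question \emph{a priori} globale à $M$ (du fait de la présence possible de quantificateurs portant sur des éléments de $M \setminus A$) en une question purement interne à $A$. En l'absence d'élimination, rien n'empêcherait en effet qu'un quantificateur existentiel dans $\phi$ soit satisfait dans $M$ par un élément de $M \setminus A$ sans l'être dans $M'$ — et c'est exactement cette possibilité que $\psi$, étant sans quantificateurs, exclut par construction.
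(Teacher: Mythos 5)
Votre démonstration est correcte et suit exactement la même démarche que celle, plus lapidaire, du texte : on utilise l'élimination des quantificateurs pour se ramener à un énoncé sans quantificateurs, puis on conclut par la définition de sous-structure (préservation des termes et des relations atomiques sur $A$, stabilité par connecteurs booléens). Vous ne faites qu'expliciter l'induction sur la complexité syntaxique que le texte laisse implicite.
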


\begin{proof}
Cela résulte de la définition d'une sous-structure
si $\Phi$ est sans quantificateurs, et l'hypothèse faite sur $T$ permet de se ramener à
ce cas. 
\end{proof}

La théorie des corps algébriquement clos, 
la théorie des corps réels clos, la théorie 
DOAG et la théorie ACVF éliminent les quantificateurs
C'est pour bénéficier
de l'élimination des quantificateurs 
qu'on a choisi, en définissant ACVF, de se limiter aux corps algébriquement clos
{\em non trivialement valués} ; en effet, la théorie $T$
des corps 
valués
algébriquement clos
généraux (avec valuation triviale autorisée) dans le langage $\mathscr L_{\rm val}$
n'élimine pas les quantificateurs. Pour le voir, 
fixons un corps algébriquement clos $k$ trivialement valué, et une 
extension {\em non trivialement valuée}
et algébriquement
close $K$ de $k$. L'énoncé
«~$\exists x, x\neq 0 \;{\rm et}\;\abs x\neq 1$~» est
alors par construction 
faux dans $k$ et vrai dans $K$ ; 
il en résulte en vertu du lemme~\ref{modcomp}
que $T$ n'élimine pas les quantificateurs.

\subsection{Ensembles et foncteurs définissables}\label{Ensfonct} 

Soit $T$ une théorie dans un langage 
$\mathscr L$ dont on note $\mathscr S$ l'ensemble
des sortes ; 
on suppose que 
$T$ élimine les quantificateurs. On note~$\Sigma(\mathscr L)$
la collection de toutes les structures de~$\mathscr L$, à laquelle
on rajoute l'ensemble vide. Soit~$A\in \Sigma(\mathscr L)$. 
On note $\mathsf M_A$ la catégorie des modèles 
de $T$ tels que~$A\subseteq M$ (si~$A=\emptyset$, on a donc
affaire à la catégorie de {\em tous} les modèles de~$T$) ; 
les morphismes sont les inclusions comme sous-structures. 
Pour tout~$\mathcal S\in \mathscr S$, on note~$\mathcal S_A$
la restriction de~$M\mapsto \mathcal S(M)$ à~$\mathsf M_A$. 

\begin{defi}\label{defidefi} Soit $M$ 
appartenant à $\mathsf M_A$ 
et soit $(n_{\mathcal S})_{{\mathcal S}\in \mathscr S}$ une famille d'entiers presque tous nuls. 
Un sous-ensemble
$E$ de $\prod \mathcal S(M)^{n_{\mathcal S}}$ 
(resp. un sous-foncteur~$D$ de~$\prod \mathcal S_A^{n_{\mathcal S}}$)
est dit {\em $A$-définissable} s'il existe 
une formule $\Phi([\underline x_{\mathcal S}]_{\mathcal S})$ du langage $\mathscr L$,
à paramètres
dans $A$, en variables libres $(\underline x_{\mathcal S})$ où chaque
$\underline x_{\mathcal S}$ est un $n_{\mathcal S}$-uplet de variables de la sorte $\mathcal S$, 
telle que $$E=\left\{(\underline a_{\mathcal S})\in \prod \mathcal S(M)^{n_{\mathcal S}}, 
\Phi([\underline a_{\mathcal S}])\right\}$$

(resp. $F(N)=\left\{(\underline a_{\mathcal S})\in \prod \mathcal S(N)^{n_{\mathcal S}}, 
\Phi([\underline a_{\mathcal S}])\right\}$ pour tout~$N\in \mathsf M_A$). 
\end{defi}

Soit 
$E$ un sous-ensemble $A$-définissable de $\prod \mathcal S(M)^{n_{\mathcal S}}$
et 
soit $E'$ un sous-ensemble $A$-définissable de $\prod \mathcal S(M)^{m_{\mathcal S}}$.
Une application $f: E\to E'$ est dite 
$A$-définissable si son graphe est  une partie
$A$-définissable de~$\prod \mathcal S(M)^{n_{\mathcal S}+m_{\mathcal S}}$. 
Si $f$ est $A$-définissable, 
$f(E)$ est un sous-ensemble $A$-définissable
de $\prod \mathcal S(M)^{m_{\mathcal S}}$. 

De même, soit~$D$ un sous-foncteur~$A$-définissable 
de $\prod \mathcal S_A^{n_{\mathcal S}}$
et 
soit $D'$ un sous-foncteur~$A$-définissable de $\prod \mathcal S_A^{m_{\mathcal S}}$.
Une transformation naturelle~$f: D\to D'$ est dite 
$A$-définissable si son graphe est  un sous-foncteur
$A$-définissable de~$\prod \mathcal S_A^{n_{\mathcal S}+m_{\mathcal S}}$. 
Si $f$ est~$A$-définissable, 
$f(D)$ est un sous-foncteur~$A$-définissable
de $\prod \mathcal S_A^{m_{\mathcal S}}$.

\medskip
Soit  $E$ un sous-ensemble~$A$-définissable de
$\prod \mathcal S(M)^{n_{\mathcal S}}$ et soit $\Phi$
comme dans la définition~\ref{defidefi}. 
Le foncteur
$$N\mapsto \left\{(\underline a_{\mathcal S})\in \prod \mathcal S(N)^{n_{\mathcal S}}, \Phi([\underline a_{\mathcal S}])\right\}$$
est un sous-foncteur~$A$-définissable de 
$\prod \mathcal S_A^{n_{\mathcal S}}$. 
{\em Supposons~$A\neq \emptyset$}. Ce foncteur 
ne dépend alors que de~$E$, et pas du choix
de la formule 
$\Phi$ utilisée pour décrire ce dernier. 
En effet, si $\Psi$ est une autre formule 
décrivant $E$, le fait que $\Phi$ et $\Psi$ décrivent le même
ensemble au niveau du modèle $M$ se propage 
en vertu du lemme~\ref{modcomp}~à tout
$N\in \mathsf M_A$ (c'est ici qu'on utilise le fait
que~$A$ est non vide : le lemme~\ref{modcomp}
requiert l'existence d'une sous-structure commune 
aux deux modèles qu'il met en jeu).  
Il est dès lors licite de noter ce foncteur~$\underline E$. 

\begin{rema} L'assertion précédente
est fausse en général lorsque~$A=\emptyset$. 
Par exemple, supposons que~$T$ est la théorie des corps
algébriquement clos, et soit~$D$
(resp.~$D'$) le foncteur qui associe à
un modèle~$F$ de~$T$
l'ensemble~$\{x\in F,x+1=x\}$
(resp. l'ensemble~$\{x\in F, x+x=1\}$). Les foncteurs~$D$ et~$D'$
sont~$\emptyset$-définissables, et si~$F$ est un modèle de~$T$
de caractéristique 2, alors~$D(F)=D'(F)=\emptyset$. Mais~$D$ et~$D'$
ne coïncident pas pour autant : si~$F$ est un modèle de~$T$
de caractéristique
différente de~$2$ alors~$D(F)=\emptyset$ et~$D'(F)=\{1/2\}$. 
\end{rema}

Soit $M\in \mathsf M_A$. Notons $\mathsf D_{A,M}$ la catégorie définie comme suit. 

$\bullet$ Ses objets sont les
couples $((n_{\mathcal S}),E)$ où $(n_{\mathcal S})_{\mathcal S\in \mathscr S}$ est une famille d'entiers presque tous nuls, et $E$ 
un sous-ensemble $A$-définissable de $\prod \mathcal S(M)^{n_{\mathcal S}}$. 

$\bullet$ Ses morphismes sont les applications $A$-définissables. 

On définit de même la catégorie ${\mathbb D}_A$ en remplaçant sous-ensembles $A$-définissables
de $\prod \mathcal S(M)^{n_{\mathcal S}}$ par sous-foncteurs $A$-définissables de
 $\prod \mathcal S_A^{n_{\mathcal S}}$, 
et applications $A$-définissables par transformations naturelles $A$-définissables. 
Si~$A$ est non vide, 
les flèches
$E\mapsto \underline E$ et $D\mapsto D(M)$ établissent alors un {\em isomorphisme} entre
les catégories $\mathsf D_{A,M}$ et ${\mathbb D}_A$. 

\medskip
Donnons maintenant quelques exemples. 

$\bullet$ On suppose que $\mathscr L$ est le langage des corps ordonnés, et que
$T$ est la théorie des corps réels clos. Soit $R_0$ un modèle
de $T$ et soient $a$ et $ b$ deux éléments de $R_0$. Le sous-ensemble
$E:=\{x\in R_0, a\leq x\leq b\}$ de $R_0$ 
est visiblement définissable sur la sous-structure $A:={\mathbb Q}(a,b)$
de $R_0$. 
Le foncteur $\underline E$ envoie
$R\in \mathsf M_A$ sur $\{x\in R, a\leq x\leq b\}$ ; on 
le note $\mathopen[a;b\mathclose]$. 

Le foncteur~$\mathopen[0;1\mathclose] : R\mapsto \{x\in R,0\leq x\leq 1\}$ est défini 
sur~$\mathsf M_\emptyset$ et est~$\emptyset$-définissable. 

$\bullet$ On suppose que $\mathscr L$ est le langage des corps valués, 
et que $T$ est égale à ACVF. Soit $F_0$ un modèle de $T$.

- Soient
$a$ et $b$ deux éléments de $\abs {F_0}$. Le sous-ensemble
$E:=\{x\in \abs {F_0}, a\leq x\leq b\}$ de $\abs{F_0}$
est visiblement définissable sur la sous-structure
$A:=(k_0,\langle \abs{k_0},a,b\rangle)$ de $F_0$,
 où $k_0$ est 
le sous-corps premier de $F_0$. Le foncteur $\underline E$ envoie
un modèle $F$ appartenant à $\mathsf M_A$
sur l'ensemble~$\{x\in \abs F, a\leq x\leq b\}$ ; on 
le note $\mathopen[a;b\mathclose]$. 

Le foncteur~$\mathopen[0;1\mathclose] : F\mapsto \{x\in \abs F,0\leq x\leq 1\}$ est défini 
sur~$\mathsf M_\emptyset$ et est~$\emptyset$-définissable.

- Soit $\lambda \in F$ et soit $r\in \abs F$. Le sous-ensemble 
$E:=\{y\in F, \abs{y-\lambda}\leq r\}$ est visiblement définissable sur la sous-structure
$A:=(k_0(\lambda), \langle \abs{k_0(\lambda)},r\rangle)$ de
$F_0$. 
Le foncteur $\underline E$ envoie
$F\in \mathsf M_A$ sur $\{y\in F, \abs{y-\lambda}\leq r\}$ ; on 
le note $b(\lambda,r)$. On dit que c'est la {\em boule fermée de centre
$\lambda$ et de rayon $r$.} On définit de même le foncteur $b^{\rm ouv}(\lambda,r)$
(boule {\em ouverte} de centre $\lambda$ et de rayon $r$). 

Les foncteurs~$b(0,1) : F\mapsto \{x\in F,\abs x\leq 1\}$ 
et~$b^{\rm ouv}(0,1) : F\mapsto \{x\in F,\abs x<1\}$ sont définis
sur~$\mathsf M_\emptyset$ et~$\emptyset$-définissables.

\medskip
Il arrive fréquemment que l'on dise d'un foncteur covariant
$h : \mathsf M_A\to \mathsf{Ens}$ qu'il est~{\em$A$-définissable}. Cela 
signifie qu'il existe un foncteur $D\in {\mathbb D}_A$ et un isomorphisme
de foncteurs $h\simeq D$. Le problème est que le foncteur $D$ en question 
n'a {\em a priori} pas de raison d'être canonique, si l'on s'en tient à cette définition. 
En réalité, à chaque fois que l'on emploiera cette terminologie, le foncteur $D$ sera bien défini, 
pour une raison simple : on se donnera toujours {\em implicitement} un peu plus
qu'un foncteur covariant de $\mathsf M_A$ vers $\mathsf{Ens}$, et ces données
supplémentaires rigidifieront la situation. Nous allons expliquer ci-après en détail de quoi
il retourne.

\medskip
Soit $D\in {\mathbb D}_A$. Soit~$B\in \Sigma(\mathscr L)$
telle que~$A\subseteq B$. 
Le foncteur $D$ définit par restriction un élément 
de ${\mathbb D}_B$, noté $D_B$, et partant un foncteur contravariant 
${\rm Hom}_B(.,D_B)$ de ${\mathbb D}_B$ dans $\mathsf {Ens}$ ; 
si $A\subseteq B \subseteq B'$, on a une inclusion naturelle
${\rm Hom}_B(\Delta,D_B)\subset {\rm Hom}_{B'}(\Delta_{B'},D_{B'})$
pour tout $\Delta\in D_B$. 
Remarquons par ailleurs que 
pour tout $M\in \mathsf M_A$, on a $D(M)={\rm Hom}_M(\{*\},D_M)$, où 
$\{*\}$ est un singleton arbitrairement choisi dans un produit
$\prod \mathcal S(M)^{n_{\mathcal S}}$ quelconque (notons
que~$\{*\}$ est un objet 
de $\mathsf D_{M,M}\simeq{\mathbb D}_M$). 

Soit $\mathsf C_A$ la catégorie définie comme suit. 

$\bullet$ Ses objets sont
les couples $(B,D)$, où $B\in \Sigma(\mathscr L)$, où~$A\subset B$, 
et où $D\in {\mathbb D}_B$. 

$\bullet$ L'ensemble des morphismes de $(B,D)$ dans $(B',D')$ est l'ensemble
${\rm Hom}_B(D,D'_B)$ si $B'\subseteq B$, et est vide sinon. 

Si $D\in {\mathbb D}_A$ alors $(B,\Delta)\mapsto {\rm Hom}_B(\Delta,D_B)$
est de façon naturelle
un foncteur contravariant de $\mathsf C_A$ vers $\mathsf{Ens}$, 
noté ${\rm Hom}_{\bullet}(.,D_\bullet)$. 
On dira qu'un foncteur 
contravariant \mbox{$h: \mathsf C_A\to \mathsf {Ens}$} est 
{\em $A$-définissable} s'il existe $D\in {\mathbb D}_A$ tel que 
$h$ soit isomorphe à ${\rm Hom}_{\bullet}(.,D_\bullet)$. 
Le lemme de Yoneda assure que $D$ est dans ce cas uniquement déterminé.

\medskip
On peut maintenant préciser quelles
sont les données implicites évoquées plus haut. Lorsqu'on dira qu'un foncteur covariant 
$h : \mathsf M_A\to \mathsf{Ens}$
est $A$-définissable, il sera sous-entendu : 

- que le foncteur $h$ «se met en familles de façon naturelle», 
c'est-à-dire qu'il 
existe un foncteur contravariant 
$h^\sharp$ de $\mathsf C_A$ dans $\mathsf{Ens}$, 
dont la définition est censée découler 
clairement du contexte, 
 et des 
identifications
naturelles $h(M)=h^\sharp(M,\{*\})$ pour tout $M$
appartenant à $\mathsf M_A$ ;

- que le foncteur $h^\sharp$ est $A$-définissable.

En pratique, la première de ces conditions se produit lorsqu'on s'est donné
une classe d'objets $\mathscr C$, 
lorsque $h(M)$ 
se décrit comme l'ensemble des objets de $\mathscr C$
définis sur $M$, et lorsque la notion de famille $B$-définissable d'objets
de $\mathscr C$ tombe peu ou prou sous le sens : on définit 
alors justement $h^\sharp(B,D)$ comme l'ensemble des familles
$B$-définissables d'objets de $\mathscr C$ paramétrées par 
$D$. 

\medskip
Soit $h$ un foncteur $A$-définissable, c'est-à-dire
un foncteur covariant de $\mathsf M_A$
dans $\mathsf{Ens}$ qui est $A$-définissable
au sens ci-dessus. Il existe un objet
$D\in \mathsf D_A$, canoniquement déterminé, tel 
que $h\simeq D$.
Il en résulte une définition naturelle de sous-ensemble
\mbox{$A$-définissable} de $h(M)$ pour tout $M\in \mathsf M_A$, de
transformation naturelle
$A$-définissable de $h$ vers un autre foncteur $A$-définissable $h'$, 
etc. Si~$A\neq \emptyset$ alors
pour tout $M\in \mathsf M_A$, tout sous-ensemble $A$-définissable $E$
de $h(M)$ induit
un sous-foncteur $A$-définissable $\underline E$ de $h$ ; et
les flèches $E\mapsto \underline E$ et $g\mapsto g(M)$ 
mettent en bijection l'ensemble des parties
$A$-définissables de $h(M)$ et celui des sous-foncteurs
$A$-définissables de $h$. 

Si $B\in \Sigma(\mathscr L)$ est telle que $A\subseteq B$, tout
foncteur $A$-définissable $h$ induit par restriction à $\mathsf M_B$ un 
foncteur $B$-définissable $h_B$. Donnons maintenant
quelques exemples de foncteurs définissables.

\medskip
$\bullet$ On se place dans la théorie des corps
algébriquement clos. Soit $k$ un corps 
et soit $X$ un $k$-schéma de type fini ; il induit 
un foncteur $k$-définissable, encore noté $X$. 

$\bullet$ On se place dans la théorie des corps réels clos. Soit 
$k$ un corps ordonné, et soit $X$ un $k$-schéma de type fini ; il 
induit 
un foncteur $k$-définissable, encore noté $X$. 
Un sous-foncteur $U$ de $X$ est~$k$-définissable
si et seulement si peut être défini Zariski-localement
sur $X$ par une combinaison booléenne d'inégalités entre fonctions
régulières. 

$\bullet$ On se place dans la théorie ACVF. Soit 
$(k,G_0)$ un corps valué, et soit $X$ un $k$-schéma de type fini ; 
il induit
un foncteur $k$-définissable, encore noté $X$. 
Un sous-foncteur $U$ de $X_{(k,G_0)}$ est 
$(k,G_0)$-définissable
si et seulement si il peut être
défini Zariski-localement
sur $X$ par une combinaison booléenne d'inégalités 
de la forme $\abs f\Join \lambda \abs g$, où $f$ et $g$
sont des fonctions régulières, où $\lambda\in G_0$ et
où $\Join\in\{<,>,\leq,\geq\}$.

$\bullet$ On se place dans la théorie ACVF. 
Soit $(k, G_0)$ un corps valué, et soient $(a_i)_{1\leq i\leq n}$ 
et $(b_i)_{1\leq i\leq n}$ deux familles finies d'éléments de $G_0$.
Le foncteur qui envoie $F\in \mathsf M_{(k,G_0)}$ 
sur $$\left(\coprod_i \{x\in \abs F, \min(a_i,b_i)\leq x\leq \max(a_i,b_i)\}\right)/\mathscr R$$ où $\mathscr R$ 
identifie pour tout $i\leq n-1$ l'élément $b_i$ du $i$-ème sommande avec l'élément 
$a_{i+1}$ du $(i+1)$-ème sommande, est $(k, G_0)$ définissable. 
On appelle
un tel foncteur un {\em segment généralisé} ; le foncteur singleton $F\mapsto \{a_1\}$ (resp. $F\mapsto \{b_n\}$)
est appelé son {\em origine} (resp. {\em extrémité}). 

En s'autorisant à identifier origines et/ou extrémités de plus de deux segments, on définit
de même des foncteurs $(k,G_0)$-définissables appelés {\em graphes finis}, dont
certains sont des {\em arbres}. Nous laissons au lecteur le soin d'en donner
des définitions précises. 

\begin{rema}\label{stablyemb}
Soit $D$ un foncteur $G_0$-définissable 
dans la théorie DOAG. Le foncteur
$D\circ \Gamma_0 : F\mapsto D(\abs F)$ 
est alors un foncteur $(k,G_0)$-définissable
dans la théorie ACVF. 
On vérifie que l'application $\Delta\mapsto \Delta\circ \Gamma_0$
établit une {\em bijection} entre l'ensemble
des sous-foncteurs $G_0$-définissables de $D$
et celui des sous-foncteurs $(k,G_0)$ définissables
de $D\circ \abs .$~. En bref, la trace de la théorie ACVF
sur la sorte $\Gamma_0$ coïncide avec la théorie
DOAG. 
De même, la trace de la théorie ACVF
sur la sorte «corps résiduel» coïncide avec la théorie
des corps algébriquement clos. 

\end{rema}

\begin{rema}\label{segmentgen} On prendra garde qu'un 
segment généralisé n'est pas, en général, 
définissablement isomorphe à un segment
$F\mapsto \{x\in \abs F, a\leq x\leq b\}$. 
Le lecteur vérifiera par exemple
que la concaténation~$[0;1]\coprod\limits_{1=0}[0;1]$
(qui est~$\emptyset$-définissable) ne
peut pas être identifiée à un segment, car le langage autorisé
pour une telle identification est $\mathscr L_{\rm gao}$
({\em cf.} la remarque~\ref{stablyemb} ci-dessus), qui
comprend simplement 
la relation d'ordre et la multiplication. Par contre, une concaténation
de segments {\em à extrémités non nulles}
 est encore
(définissablement isomorphe à) un segment.
\end{rema}

\subsection{Élimination des imaginaires ; le langage 
des corps valués de Haskell, Hrushovski et Macpherson}

{\em Références : } nous renvoyons le lecteur à l'ouvrage \cite{haskell-hrush-macph2008}, ainsi
qu'aux transparents d'exposé d'Ehud Hrushovski (\cite{notes-hrush2011}). 

On conserve les notations de la section précédente. Soit $D$
un foncteur $A$-définissable. 
On dira qu'un sous-foncteur $R$ de $D\times D$ est une {\em relation d'équivalence
$A$-définissable} si $R$ est $A$-définissable
et si $R(M)$ est pour tout $M\in \mathsf M_A$
le graphe d'une relation d'équivalence sur $D(M)$. Le foncteur quotient 
$D/R$ est alors bien défini. 

\begin{defi}\label{elimimag} On dit que la théorie $T$ {\em élimine les imaginaires}
dans le langage $\mathscr L$ si pour toute~$A\in \Sigma(\mathscr L)$, pour tout foncteur
$A$-définissable $D$ et toute relation d'équivalence $A$-définissable
$R\subset D\times D$, le quotient $F/R$ est $A$-définissable
(la flèche quotient $F\to F/R$ est alors automatiquement
$A$-définissable). 

\end{defi}

\begin{rema} Pour que~$T$ élimine les imaginaires, il suffit
que la condition ci-dessus soit vérifiée lorsque~$A=\emptyset$ :
cela provient du fait qu'en général, toute relation d'équivalence~$A$-définissable peut être vue, 
en «faisant varier les paramètres», comme une spécialisation d'une
relation~$\emptyset$-définissable. 

\end{rema}

La théorie des corps algébriquement clos
ou celle des corps réels clos éliminent les imaginaires, 
mais ce n'est pas le cas de ACVF : on peut montrer
que le foncteur quotient
$F\mapsto {\rm GL}_2(F)/{\rm GL}_2(F\zero)$  (autrement dit,
le foncteur «espace des réseaux du plan»), défini sur 
$\mathsf M_\emptyset$, n'est pas $\emptyset$-définissable. Notons par
contre que $F\mapsto {\rm GL}_1(F)/{\rm GL}_1(F\zero)$ est isomorphe
à $F\mapsto |F^*|$, et est donc $\emptyset$-définissable. 

Il existe un procédé standard pour enrichir le langage $\mathscr L$
de façon à {\em forcer} la théorie $T$ à éliminer les imaginaires
: il consiste 
à rajouter, 
pour toute famille $(n_{\mathcal S})$ d'entiers presque tous nuls, tout sous-foncteur 
$\emptyset$-définissable $D$ de $\prod \mathcal S_\emptyset^{n_S}$ et toute relation
 d'\'equivalence~$\emptyset$-définissable $R\subset D\times D$ d\'efinissable sans param\`etres, une sorte
«quotient de $D$ par $R$» et un symbole fonctionnel «application quotient
de $D$ vers $D/R$». Ce langage étendu~$\mathscr L^*$ possède les propriétés suivantes : 

$\bullet$ si~$A\in \Sigma(\mathscr L)$, si~$M\in \mathsf M_A$
et si~$D$ est un foncteur~$A$-définissable,
un sous-foncteur de~$D$ (resp. un sous-ensemble
de~$D(M)$) est~$\mathsf A$-définissable au sens de~$\mathscr L^*$
si et seulement si il l'est au sens de~$\mathscr L$ ; 

$\bullet$ le lemme~\ref{modcomp} est encore valable pour~$\mathscr L^*$ ;

$\bullet$ la théorie~$T$ élimine les imaginaires dans le langage~$\mathscr L^*$. 

\medskip
Par contre, la théorie~$T$ n'élimine plus nécessairement
les quantificateurs dans le langage~$\mathscr L^*$. On peut y remédier 
sans modifier l'ensemble des sortes, 
ni la notion 
de foncteur ou de sous-ensemble~$A$-définissable : il suffit de rajouter, 
au langage~$\mathscr L^*$, pour
toute formule (quantifiée)~$\Phi$ de~$L^*$
en les variables libres~$(x_1,\ldots,x_n)$,
un symbole de relation~$n$-aire~$\sigma_\Phi$,
que l'on interprète dans un modèle~$M$ de~$T$ en disant
que~$\sigma_\Phi(m_1,\ldots,m_n)\iff \Phi(m_1,\ldots, m_n)$. 

\medskip
Si le langage ainsi obtenu a un intérêt théorique, il n'est en général 
pas forcément 
très explicite ni très maniable, étant donnée la profusion
de sortes supplémentaires introduites. Aussi cherche-t-on, lorsque c'est possible, 
à mettre en évidence un ensemble limité de quotients tel que l'adjonction
des sortes et symboles fonctionnels correspondants suffise à rendre définissables
{\em tous} les quotients.  
C'est ce que Haskell, Hrushovski et Macpherson ont
fait dans \cite{haskell-hrush-macph2006} à propos de la théorie ACVF. 
Ils ont démontré
que pour qu'elle élimine les imaginaires, il suffit de rajouter
au langage $\mathscr L_{\rm val}$ 
 les sortes et
symboles fonctionnels suivants, où ${\rm GL}_n$ (resp. ${\rm GL}_n\zero$, resp. ${\rm GL}_n\zeroo$) 
désigne
le foncteur qui envoie
un modèle $F$ de ACVF sur ${\rm GL}_n(F)$ (resp. ${\rm GL}_n(F\zero)$, resp. 
$I_n+F\zeroo {\rm M}_n(F\zero)$~) : 

$\bullet$ pour tout $n>0$, une sorte $\mathcal S_n$ 
pour le quotient ${\rm GL}_n/{\rm GL}_n\zero$,
et un symbole fonctionnel $\rho_n$ codant l'application quotient correspondante ; 

$\bullet$ pour tout entier $n>0$, une sorte $\mathcal T_n$ pour le quotient 
${\rm GL}_n/{\rm GL}_n\zeroo$, 
et un symbole fonctionnel $\tau_n$  codant l'application quotient 
$ {\rm GL}_n/{\rm GL}_n\zeroo\to {\rm GL}_n/{\rm GL}_n\zero$. 

\medskip
\begin{rema}Soit $F$ un modèle de ACVF. On peut identifier
naturellement $\mathcal S_n(F)$ à l'espace des réseaux de $F^n$, 
et $\mathcal T_n(F)$, {\em via} l'application
$\tau_n$,  
à l'espace total du «fibré des bases»
d'un certain « fibré vectoriel»~sur $\mathcal S_n(F)$ 
; plus précisément, la fibre de $\tau_n$
en un point correspondant à un réseau $\Lambda$ est l'espace
des bases du $\tilde \Lambda$-espace vectoriel $\Lambda/F\zeroo \Lambda$. 
\end{rema}

Appelons $\mathscr L^*_{\rm val}$ le langage
des corps valués enrichi par les sortes $\mathcal S_n$ et $\mathcal T_n$
et les symboles $\rho_n$ et $\tau_n$,
et convenablement \'etendu de fa\c con \`a assurer 
l'\'elimination des quantificateurs\footnote{Nous avons expliqué
ci-dessus comment atteindre ce dernier objectif par adjonction
d'un ensemble de symboles relationnels. Celui-ci
peut sembler
absolument gigantesque, et peu tangible.
Mais dans~{\em loc. cit.}, Haskell,
Hrushovski et Macpherson exhibent un ensemble
{\em explicite}
et de taille raisonnable de tels symboles qui s'avère
suffisant.}. C'est désormais avec lui
que nous travaillerons, et l'acronyme ACVF désignera 
à partir de maintenant la théorie des corps non trivialement
valués et algébriquement clos 
{\em dans le langage $\mathscr L^*_{\rm val}$}. Cette 
nouvelle version de ACVF élimine par construction
les quantificateurs
et les imaginaires. 
Le {\em foncteur 
des boules fermées} $\overline B$
de $\mathsf M_\emptyset$ vers
$\mathsf {Ens}$ qui associe à un corps $F\in \mathsf M_\emptyset$ 
l'ensemble $\{b(\lambda,r)_F\}_{\lambda\in F,r\in \abs F}$ 
est $\emptyset$-définissable ; en effet, il se décrit simplement,
comme on l'a vu dans l'introduction\footnote{Dans l'introduction,
on travaillait au-dessus d'une structure $A$ particulière, mais celle-ci ne jouait aucun
rôle dans la description évoquée.}, à 
partir d'un quotient de deux foncteurs en groupes $\emptyset$-définissables
dans le langage classique $\mathscr L_{\rm val}$.
On vérifie
sans peine que si $F\in \mathsf M_\emptyset$, l'application
de $F\times \abs F$ vers $\overline B(F)$ qui envoie $(\lambda,r)$ sur
$b(\lambda,r)_F$ est $\emptyset$-définissable.

\section{Types}

{\em Références :} pour ce qui concerne les types, nous renvoyons le lecteur aux
premiers chapitres de l'article étudié
\cite{hrushovski-loeser2010}, à l'ouvrage \cite{haskell-hrush-macph2008}, 
et aux transparents d'exposés de Hrushovski 
(\cite{notes-hrush2011}). 

\subsection{Généralités}\label{typegeneral}

On fixe un langage $\mathscr L$ et une théorie $T$ dans
le langage $\mathscr L$ ; on suppose
que $T$ élimine les quantificateurs. Soit $A$
une structure de $\mathscr L$, et soit $D$
un foncteur $A$-définissable. Soient $N$ et $N'$ deux 
objets de $\mathsf M_A$. Soit $x\in D(N)$ et soit
$x'\in D(N')$. On dit que $(N,x)$ et $(N',x')$ sont
{\em $A$-équivalents} si pour tout sous-foncteur
$A$-définissable $\Delta$ de $D$, on a $$x\in \Delta(N)\iff x'\in \Delta(N')$$
(autrement dit, $x$ et $x'$ ne sont pas discernables par une formule
à paramètres dans~$A$). 
Un {\em type 
sur $D$}
est une classe de $A$-équivalence de couples $(N,x)$ comme ci-dessus. 
Si $t$~est la classe de $(N,x)$, on dira que $x$ {\em réalise} le type $t$. 
Soit $t$ un type sur $D$ et soit 
$x$ une réalisation de $t$, appartenant à un modèle $N$. 
Soit $\Delta$ un sous-foncteur $A$-définissable de $D$. 
L'appartenance
ou non de $x$ à $\Delta(N)$ ne dépend que de $t$ ; si elle est avérée,
on dira que $t$ est situé sur $\Delta$. Il n'y a pas de conflit de terminologie : 
le couple $(N,x)$ définit justement dans ce cas sans ambiguïté 
un type sur $\Delta$. Notons que le type 
$t$ est par définition {\em caractérisé}
par l'ensemble des 
sous-foncteurs $A$-définissables
de $D$ sur lesquels il est situé.  
 On note
$S(D)$ l'ensemble\footnote{Soit $\mathscr F$ l'ensemble
des sous-foncteurs $A$-définissables de $D$ (qui s'identifie, rappelons-le,
à l'ensemble
des parties $A$-définissables de $D(M)$ pour tout
$M\in \mathsf M_A$). On peut par ce qui
précède voir un type sur $D$ comme une application de $\mathscr F$ vers
$\{0,1\}$ ; par conséquent, $S(D)$ est bien un ensemble.}
des types sur $D$. 

Soit $D'$ un foncteur $A$-définissable et soit $f: D\to D'$
une transformation naturelle définissable. Soit $t\in S(D)$ 
et soit $x$ une réalisation
de $t$ ; son image $f(x)$ induit un type sur $D'$ qui ne dépend que de $t$, 
et que l'on note $f(t)$. 

\medskip
Supposons que $A$ soit un modèle de $T$. Si $x\in D(A)$ il définit un type 
sur $D$ (dont il est une, et même la seule, réalisation)
; un tel type est qualifié de {\em simple}. 
Donnons maintenant quelques exemples plus intéressants. 

\medskip
(1) On suppose que $\mathscr L={\mathscr L}_{\rm ann}$, et que $T$ 
est la théorie de corps algébriquement clos. Soit $F$ un corps et soit
$X$ un $F$-schéma de type fini ; on peut le voir comme un foncteur $F$-définissable. 
Soit $x$ un point du schéma $X$ et soit $F'$ une extension algébriquement close
du corps résiduel $F(x)$. Le point $x$ définit un point de $X(F')$, et 
partant un type 
sur $X$. Cette construction induit une {\em bijection} entre l'ensemble
sous-jacent au schéma~$X$ et $S(X)$.

À titre d'illustration, supposons que $X$ est intègre, et soit $x$ son point générique. 
Le type correspondant
est alors caractérisé par le fait qu'il n'est situé sur {\em aucun} fermé de Zariski
strict de $X$. 

\smallskip

(2) On suppose que $\mathscr L={\mathscr L}_{\rm co}$, et que $T$ 
est la théorie des corps réels clos. 
Soit $R$ un corps ordonné, et soit $X$ un $R$-schéma
de type fini ; on peut le voir comme un foncteur $R$-définissable. 
Rappelons que le
{\em spectre réel} $X_r$ de $X$ est l'ensemble des couples $(x,\leq)$ où 
$x$ est un point du schéma $X$ et $\leq $ un ordre sur le corps résiduel $R(x)$
prolongeant l'ordre sur $R$.  
Soit $\xi=(x,\leq)$ un point de $X_r$ et soit $R'$ la clôture réelle de $(R(x),\leq)$. 
Le point $x$ définit un point de $X(R')$, et 
partant un type
sur $X$. Cette construction induit une {\em bijection} entre l'ensemble
$X_r$ et $S(X)$. 

À titre d'illustration, supposons que $X={\mathbb A}^1_R$, et soit
$u$ la fonction coordonnée sur~$X$. Munissons $R(u)$ de l'ordre
tel que $u>0$ et $u<\epsilon$ pour tout élément $\epsilon>0$ de $R$. On définit
par ce biais un point de $X_r$, supporté par le point générique de~$X$. 
Le type 
correspondant est noté $0^+_R$ ; il est caractérisé
par le fait qu'il est situé sur $]0;\epsilon[_R$ pour tout élément $\epsilon>0$ de $R$. 

\smallskip

(3) On suppose que $\mathscr L={\mathscr L}^*_{\rm val}$, et que $T$ 
est la théorie ACVF. Soit $F$ un corps valué, et soit $X$ un $F$-schéma
de type fini ; on peut le voir comme un foncteur $F$-définissable. Rappelons que le
{\em spectre valuatif} $X_v$ de $X$ est l'ensemble des couples $(x,\abs .)$ où 
$x$ est un point du schéma $X$ et $\abs. $ une valuation sur le corps résiduel $F(x)$,
qui prolonge la valuation de $F$. 
Soit $\xi=(x,\abs .)$ un point de $X_v$ et soit $F'$ une extension
non trivialement
valuée et algébriquement close de $(F(x),\abs .)$. 
Le point $x$ définit un point de $X(F')$, et 
partant un type sur $X$. Cette construction induit une {\em bijection} entre l'ensemble
$X_v$ et $S(X)$. 

À titre d'illustration, supposons que $X={\mathbb A}^1_F$, et soit
$u$ la fonction coordonnée sur~$X$. Soit $\lambda\in F$ et soit
$r\in \abs F$. Munissons $F(u)$ de la valuation
$\abs. $ telle que l'on ait \mbox{$\abs {\sum a_i (u-\lambda)^i}=\max \abs{a_i}r^i$}
 pour tout polynôme 
$\sum a_i (u-\lambda)^i$
à coefficients dans $F$. On définit
par ce biais un point de $X_v$, supporté par le point générique de $X$ ; 
on note  $\eta_{\lambda,r,F}$ le type correspondant. 
Si $F$ est un modèle de ACVF, le type
$\eta_{\lambda, r,F}$ est caractérisé par 
le fait qu'il est situé sur $b(\lambda,r)_F$, mais qu'il
n'est situé sur 
aucune boule ferm\'ee ou ouverte $F$-d\'efinissable contenue
{\em strictement} dans $b(\lambda,r)_F$ ; 
il s'ensuit aisément que $\eta_{\lambda,r,F}=\eta_{\mu, s,F}$
si et seulement si $b(\lambda,r)_F=b(\mu,s)_F$. On 
appelle parfois $\eta_{\lambda,r,F}$
le {\em type générique de $b(\lambda,r)_F$}.

Supposons que $r>0$. Donnons-nous
maintenant un groupe abélien ordonné contenant $|F^*|$
ainsi qu'un élément $\omega$ tel que $\omega<1$ et $\alpha<\omega$
pour tout $\alpha\in \abs{F\zeroo}$. 
Munissons $F(u)$ de la valuation
$\abs. $ telle que l'on ait~$\abs {\sum a_i (u-\lambda)^i}=\max \abs{a_i}r^i\omega^i$ pour tout polynôme 
$\sum a_i (u-\lambda)^i$
à coefficients dans $F$. On définit
par ce biais un point de $X_v$, supporté par le point générique de $X$ ; 
on note  $\eta_{\lambda,r^-,F}$ le type correspondant. 
Lorsque $F$ est un modèle de ACVF, 
le type~$\eta_{\lambda,r^-,F}$ est  caractérisé par le fait qu'il est situé sur 
$b^{\rm ouv}(\lambda,r)_F$ 
mais qu'il
n'est situé sur aucune boule fermée~$\beta\in \overline B(F)$ 
contenue
dans $b^{\rm ouv}(\lambda,r)_F$ ; il s'ensuit aisément que $\eta_{\lambda,r^-,F}=\eta_{\mu, s^-,F}$
si et seulement si $b^{\rm ouv}(\lambda,r)_F=b^{\rm ouv}(\mu,s)_F$.
On appelle
parfois $\eta_{\lambda,r^-,F}$
le {\em type générique de $b^{\rm ouv}(\lambda,r)_F$}. 

\smallskip

(4)  On travaille toujours
avec le langage $\mathscr L^*_{\rm val}$ ; soit $k$ un corps 
ultramétrique complet. 
Soit  $X$ un $k$-schéma
de type fini ; on peut le voir comme un foncteur \mbox{$k$-définissable.} Rappelons que 
{\em l'analytifié} $X^{\rm an}$ de $X$ (au sens de Berkovich)
est l'ensemble des couples $(x,\abs .)$ où 
$x$ est un point du schéma $X$ et où $\abs .$ est une valeur
absolue ultramétrique sur le corps résiduel $k(x)$. 
Soit $\xi=(x,\abs .)$ un point de $X^{\rm an}$ et soit $k'$ une extension
ultramétrique non trivialement valuée et algébriquement close 
de $(k(x),\abs .)$. 
Le point $x$ définit un point de $X(k')$, et 
partant un type sur $X$ qui est {\em ultramétrique}, au sens
où il admet une réalisation dans un modèle ultramétrique de ACVF. 
Cette construction induit une {\em bijection} entre l'ensemble
$X^{\rm an}$ et l'ensemble des types ultramétriques situés sur $X$. 

\subsection{Topologies sur les espaces de types}\label{toptypes}

On conserve les notations $\mathscr L$ et $T$ introduites 
au début de~\ref{typegeneral}. 
Soit $A$ une structure de $\mathscr L$ et soit 
$D$ un foncteur $A$-définissable. 
On munit $S(D)$  
de la topologie dont une base d'ouverts est constituée des $S(\Delta)$,
où $\Delta$ parcourt l'ensemble des sous-foncteurs $A$-définissables de $D$. 

{\em Reprenons l'exemple (1) de \ref{typegeneral}}. L'ensemble $S(X)$
s'identifie au schéma $X$. La topologie sur $X$ déduite de celle de
$S(X)$ est alors la topologie
{\em constructible} (celle qui est engendrée par les ouverts {\em et les fermés} de
Zariski). 

{\em Reprenons l'exemple (2) de \ref{typegeneral}}. On a vu que  $S(X)$
s'identifie au spectre réel $X_r$ de~$X$. 
La topologie sur $X_r$ déduite de celle de
$S(X)$ est alors la topologie
{\em constructible} (celle qui est engendrée Zariski-localement
par les combinaisons booléennes  d'inégalités strictes
{\em ou larges} entre fonctions régulières).

\medskip
La topologie ainsi définie sur $S(D)$ a l'avantage d'en faire
un espace topologique compact (nous discuterons cette propriété plus avant
un peu plus bas). Elle a l'inconvénient de faire de
{\em tout} définissable un ouvert, indépendamment de la nature
de la formule qui le décrit. Dans certaines circonstances, 
où l'on estime que certaines formules
«doivent» être ouvertes et d'autres non, on peut donc
être amené
à introduire une topologie alternative en prenant pour base
d'ouverts les $S(\Delta)$,
pour $\Delta$ parcourant
{\em un certain sous-ensemble} de 
l'ensemble des sous-foncteurs $A$-définissables de $D$. 

\medskip
{\em Reprenons l'exemple (1) de \ref{typegeneral}}. L'ensemble $S(X)$
s'identifie au schéma $X$. On peut munir $S(X)$ de la topologie
engendrée par les $S(\Delta)$, où $\Delta$ est un ouvert
de Zariski de $X$. La topologie ainsi définie sur $S(X)$ correspond
évidemment à 
la topologie de Zariski de $X$. 

{\em Reprenons l'exemple (2) de \ref{typegeneral}}. On a vu que  $S(X)$
s'identifie au spectre réel $X_r$ de $X$.  On peut munir $S(X)$ de la topologie
engendrée par les $S(\Delta)$, 
où $\Delta$ est défini Zariski localement
par combinaison booléenne positive
d'inégalités {\em strictes} entre fonctions 
régulières. 
La topologie sur $X_r$
à laquelle elle correspond
est la topologie usuelle du spectre réel. 

\subsection{Digression sur la compacité}\label{digression}

La compacité des espaces de type joue un rôle
majeur en théorie des modèles en général, et chez  Hrushovski et
Loeser en particulier, où elle intervient à maints endroits -- même
si cela n'apparaîtra guère dans ce texte, où nous contenterons de brosser
très grossièrement les preuves. 
Le plus souvent, elle s'utilise sous la forme du principe suivant, à l'énoncé volontairement
vague : si $\mathscr P$ est une propriété à l'énoncé raisonnable
dans le langage $\mathscr L$, il revient
au même de la démontrer en situation {\em relative} 
sur {\em un} modèle $M$
donné ou en situation {\em absolue} sur {\em tous} les modèles $M'$ tels que $M\subseteq M'$. 
C'est ce qui explique qu'il puisse être nécessaire, même si l'on s'intéresse à un modèle $M$
bien défini, de travailler sur des modèles éventuellement beaucoup plus gros. 
Nous allons étayer notre propos à travers deux exemples. 

\medskip
Le premier concerne  la {\em trivialisation constructible d'un faisceau cohérent}. 
Soit $F$ un corps
et soit $X$ un $F$-schéma de type fini. Soit $\mathscr F$ un
faisceau cohérent sur $X$. Il existe alors une partition constructible
finie et localement fermée
$(X_i)$ de $X$ telle que $\mathscr F|_{X_i}$ soit libre pour tout~$i$ 
(où $X_i$ est muni de sa structure réduite). En effet, si 
$x$ est un point du  schéma $X$ alors $\mathscr F\otimes F(x)$ est libre
(c'est la forme absolue du résultat à établir, sur le corps $F(x)$),
et cette propriété se propage au-dessus d'un ouvert de Zariski non vide
de $\overline{\{x\}}_{\rm red}$ ; on conclut en invoquant la compacité
de $X$ pour la topologie constructible -- ou une récurrence
noethérienne si l'on préfère, ce qui revient au même. 

On voit que si l'on s'était contenté de ne considérer que des $F$-points
de $X$, on n'aurait pas pu aboutir m\^eme si $F$ est alg\'ebriquement clos~: 
si $x\in X(F)$, la liberté de $\mathscr F\otimes F(x)$
n'a aucune raison {\em a priori} de se propager au-delà du singleton 
constructible~$\{x\}$. 

\medskip
Le second concerne {\em le théorème de Hardt} ; 
nous allons 
donner les grandes lignes de la preuve 
qu'en ont proposée Jacek Bochnak, Michel Coste
et Marie-Françoise Roy (\cite{bochnak-coste-roy1987}, th. 9.3.1)
ou disons plutôt 
d'une traduction de celle-ci dans le langage des types
(ils utilisent quant à eux celui du spectre réel). 
Soit $R$ un corps réel clos, et 
soient $Y\subset {\mathbb A}^n_R$ et $X\subset {\mathbb A}^m_R$ deux
foncteurs
$R$-définissables dans le langage 
des corps ordonnés (on dit aussi qu'ils sont
{\em semi-algébriques}). 
Soit $f: Y\to X$
une transformation naturelle 
\mbox{$R$-définissable.} On la suppose continue dans le
sens suivant : pour tout corps réel clos
$R'$ contenant $R$, l'application $f(R'): Y(R')\to Y(R')$
est continue pour les topologies  déduites 
de l'ordre sur $R'$ (il suffit en vertu 
du lemme \ref{modcomp}
de le tester sur {\em un} tel corps
$R'$, par exemple sur $R$). Il existe alors une partition finie
et $R$-définissable $(X_i)$ de $X$ et, pour tout~$i$, un
sous-foncteur $R$-définissable $Z_i$ de ${\mathbb A}^n_R$ telle que $f^{-1}(X_i)$ soit
$R$-définissablement homéomorphe à $Z_i\times X_i$.
En effet, soit $t\in S(X)$ et soit $x$ une réalisation de $t$, 
appartenant à $X(S)$ pour un certain corps réel clos $S$ contenant $R$. 
La fibre $f_S^{-1}(x)$
est un sous-foncteur $S$-définissable de ${\mathbb A}^n_S$, 
et possède à ce titre
une triangulation \mbox{$S$-définissable}\footnote{Cela signifie
qu'elle s'identifie à une union finie de cellules {\em ouvertes} 
d'un complexe simplicial (elle n'est
pas forcément fermée bornée) ; l'expression «ensemble simplicial»
que nous employons ensuite fait référence à la donnée combinatoire 
de ces cellules ouvertes.}. 
L'ensemble simplicial
correspondant 
admettant une réalisation dans ${\mathbb A}^m_S$, il en admet
une, disons $Z$, dans ${\mathbb A}^m_R$ (par le lemme
\ref{modcomp}). Par conséquent, il existe
un homéomorphisme $S$-définissable
$f_S^{-1}(x)\simeq Z_S$ (c'est la forme absolue du résultat à établir, 
sur le corps réel clos $S$). Cette propriété se
propage : il existe un sous-foncteur $R$-définissable
$T$ de $X$ tel que
$t \in S(T)$ et tel que $f^{-1}(T)$ soit $R$-définissablement
homéomorphe à $ Z\times T$. On conclut
en utilisant la compacité de $S(X)$. 

On voit que si l'on s'était contenté de ne considérer que des $R$-points,
on n'aurait pas pu aboutir : le type d'homéomorphie semi-algébrique 
de la fibre en 
un tel point $x$ n'a aucune raison {\em a priori} de se propager
au-delà du singleton $\{x\}$. 

\medskip
\begin{rema}\label{compultraf} Soit~$M$ un modèle de~$T$,
et soit~$D$ un foncteur~$M$-définissable. Soit~$x\in S(D)$, 
et soit~$\mathscr  E_x$
l'ensemble des sous-foncteurs~$M$-définissables
de~$D$ sur lesquels~$x$ est situé. L'ensemble~$\mathscr U_x$
des parties~$M$-définissables de~$D(M)$ de la
forme~$\Delta(M)$, où~$\Delta\in \mathscr E$, est un ultra-filtre
de parties~$M$-définissables. Réciproquement, 
il résulte de la compacité de~$S(D)$ que
tout ultrafiltre de parties~$M$-définissables de~$D(M)$ 
est de la forme~$\mathscr U_x$ pour un certain~$x\in S(D)$ 
(nécessairement unique par définition d'un type). 
\end{rema}

\subsection{Types définissables}

On conserve les notations $\mathscr L$ et $T$ introduites 
au début de~\ref{typegeneral}. 
\begin{defi}\label{deftypedef} Soit $M$ un modèle de $T$, soit $D$
un foncteur $M$-définissable et soit $t\in S(D)$. On dit que
$t$  est {\em $M$-définissable} s'il possède la propriété suivante : pour tout foncteur
$M$-définissable $D'$ et tout sous-foncteur 
$M$-définissable $\Delta\subset D\times D'$,
le sous-ensemble de $D'(M)$ formé des points $x$ tels
que $t$ soit situé sur $\Delta\times_{D'}\{x\}\subset D$ est $M$-définissable. 

\end{defi} 

Donnons quelques exemples et contre-exemples. 

\medskip
$\bullet$ Plaçons-nous dans la théorie des corps algébriquement clos, et
soit $D$ un foncteur $F$-définissable pour un certain corps
algébriquement clos $F$.  Tout type sur $D$ est alors $F$-définissable. 

$\bullet$ Plaçons-nous dans la théorie des corps
réels clos et soit $R$ un corps réel clos. 
Le type $0^+_R$ défini à la fin de l'exemple (2) de~\ref{typegeneral}
est $R$-définissable. Cela provient essentiellement du fait suivant. Soit 
$f\in R(u)$ et soit $D$ le sous-foncteur de ${\mathbb A}^1_R$ défini par
la condition $f\Join 0$, ou $\Join$ est un symbole 
appartenant à $\{<,>,\leq,\geq\}$. 
Le type $0^+_R$ est alors situé sur $D$ si et seulement si il existe 
$\epsilon >0$ dans $R$ tel que $f(x)\Join 0$ 
pour tout $x$ tel que $0<x<\epsilon$,
et cette condition s'exprime visiblement par une 
formule de $\mathscr L_{\rm co}$ à paramètres dans $R$ et portant
sur les coefficients de $f$. 

$\bullet$ Plaçons-nous dans la théorie ACVF et soit
$F$ un modèle de celle-ci.  
Les types $\eta_{\lambda,r,F}$ et $\eta_{\lambda,r^-,F}$ 
définis à l'exemple (3) de \ref{typegeneral} sont
$F$-définissables : cela résulte essentiellement de leurs descriptions
par des formules de $\mathscr L^*_{\rm val}$ à paramètres dans $F$. 

$\bullet$ Plaçons-nous dans la théorie des corps réels clos, et soit
$R_0$ la fermeture algébrique de $\mathbb Q$ dans $\mathbb R$ 
(c'est un corps réel clos). Le nombre réel $\pi\in {\mathbb A}^1({\mathbb R})
$ définit un type sur ${\mathbb A}^1_{R_0}$ ; il n'est pas
$R_0$-définissable : si $f\in R_0(u)$, le signe de $f(\pi)$ ne s'exprime pas par une
formule de $\mathscr L_{\rm co}$ 
à paramètres dans $R_0$ en les coefficients de $f$. 

$\bullet$ On reste dans la théorie  des corps réels clos. 
On se donne un corps réel clos $R$ contenant strictement $\mathbb R$
(il existe alors dans $R$ un élément supérieur à tout réel 
 positif). Munissons 
$R(u)$ de l'ordre pour lequel un élément $f$ est strictement
positif si et seulement si il existe $N\in \mathbb N$ tel que
$f(x)$ soit strictement positif pour tout $x$ {\em appartenant à $\mathbb R$}
et strictement supérieur à $N$. On définit
ainsi un point du spectre réel de ${\mathbb A}^1_R$, et partant un type 
sur ${\mathbb A}^1_R$. Ce type n'est pas $R$-définissable ; 
cela
résulte essentiellement du fait que $\mathbb R$ n'est pas une partie 
$R$-définissable
de $R$. 

\medskip
{\em Restriction des types, extension des types
définissables.} Soit $M$ un modèle de $T$, 
soit $D$ un foncteur $M$-définissable, et soit $M'\in \mathsf M_M$. 
Soit $\theta$ un type
sur $D_{M'}$. Si $x$ désigne
une réalisation de $\theta$, alors $x$ induit un type
sur $D$, qui ne dépend que de $\theta$, et pas du choix
de $x$ ; on dispose ainsi
d'une application naturelle $S(D_{M'})\to S(D)$,
parfois dite de {\em restriction au modèle $M$}.

Soit maintenant $t$ un type {\em $M$-définissable} situé sur $D$. 
Il existe
alors un {\em antécédent 
canonique} $t'$ de $t$ sur $S(D_{M'})$
qui est $M'$-définissable 
(on dit que
c'est {\em l'extension canonique de $t$ au modèle $M'$}) ; 
grossièrement, le
type~$t'$ est défini par les mêmes
formules que $t$. Plus précisément, soit $\Delta$
un sous-foncteur $M'$-définissable de $D_{M'}$. En
«faisant varier les paramètres de la formule qui définit $\Delta$», 
on montre l'existence d'un foncteur $M$-définissable $D_1$, 
d'un sous-foncteur
$M$-définissable $D_2$ de $D\times D_1$, et d'un point
$x\in D_1(M')$ tel que $\Delta=D_{2,M'} \times_{D_{1,M'}}\{x\}$. Le type $t$ étant
$M$-définissable, il existe un sous-foncteur
$M$-définissable $D_3$ de $D_1$ tel que 
pour tout $y\in D_1(M)$, le point $y$ appartienne
à $D_3(M)$ si et seulement si 
$t$ est situé sur $D_2\times_{D_1}\{y\}$. 
Le type $t'$ est alors 
situé sur $\Delta$ si et seulement si 
$x\in D_3(M')$.

Donnons maintenant quelques exemples d'extension canonique. 

$\bullet$ Si $x\in D(M)$ il définit un type simple $t$ sur $D$ ; l'extension
canonique de $t$ au modèle $M'$ est le type simple sur $D_{M'}$
associé à $x\in D(M)\subset D(M')$. 

$\bullet$ On reprend les notations de l'exemple (1) de~\ref{typegeneral}, en supposant
que $F$ est algébriquement clos. Soit
$F'$ une extension algébriquement close de $F$, soit $t\in S(X)$,
et soit $t'\in S(X_{F'})$ son extension canonique. Le type $t$ correspond
à un point $x$ du schéma $X$. Le fermé de Zariski
$\overline{\{x\}}_{F'}$ de $X_{F'}$ est irréductible (car $F$ est algébriquement clos). 
Son point générique $x'$ est précisément le point du schéma $X_{F'}$ qui correspond
à~$t'$. 

$\bullet$ On reprend les notations de l'exemple (2) de~\ref{typegeneral}
en supposant que $R$ est réel clos. Soit  $R'$
une extension réelle close de $R$. L'extension canonique du type $0^+_R$ au modèle
$R'$ est le type $0^+_{R'}$. 

$\bullet$ On reprend les notations de l'exemple (3) de~\ref{typegeneral} en 
supposant que $F$ est un modèle de ACVF. 
Soit  $F'$
une extension valuée algébriquement close de $F$. 
L'extension canonique du type $\eta_{\lambda,r,F}$ (resp. $\eta_{\lambda,r^-,F}$) au modèle
$F'$ est le type $\eta_{\lambda,r,F'}$ (resp. $\eta_{\lambda,r^-,F'}$).

\medskip
Terminons ce paragraphe
en mentionnant que si $f: D\to D'$ est une transformation naturelle $M$-définissable
entre deux foncteurs $M$-définissables, et si $t$ est un type 
\mbox{$M$-définissable}
sur $D$, alors
le type image $f(t)$ est $M$-définissable. 

\subsection{Topologies définissables ; compacité définissable}

On conserve les notations $\mathscr L$ et $T$ introduites 
au début de~\ref{typegeneral}. Soit $A$ une structure de $T$, 
et soit $D$ un foncteur $A$-définissable. Une {\em topologie définissable}
$\mathscr T$ sur $D$ consiste en la donnée, pour tout $M\in \mathsf M_A$, d'une
famille $\mathscr T_M$ de sous-foncteurs $M$-définissables de $D_M$, 
que l'on appelle les {\em ouverts $M$-définissables de $D_M$}, possédant les 
propriétés suivantes\footnote{On impose également
des conditions de finitude très raisonnables sur lesquelles nous ne nous étendrons
pas ici} : 

$\bullet$ la famille $\mathscr T_M$ est stable
par intersections finies ; 

$\bullet$ si $(U_i)$ est une famille
d'éléments de $\mathscr T_M$ telle  
que $\bigcup U_i(M)=V(M)$ pour un certain sous-foncteur $M$-définissable
$V$ de $D_M$ alors $V\in \mathscr T_M$ ;  

$\bullet$ si $M\subseteq M'$ et si $U$ est un sous-foncteur
$M$-définissable de $D_M$ alors~$U\in \mathscr T_M$ 
si et seulement si $U_{M'}\in \mathscr T_{M'}$. 

Soit $\mathscr T$ une topologie définissable sur $D$. Elle
induit pour tout $M$
une topologie~$\mathscr T(M)$ sur $D(M)$, à savoir celle engendrée par les $U(M)$
pour $U$ parcourant  $\mathscr T_M$. Lorsque~\mbox{$M\subseteq M'$,} la topologie $\mathscr T(M)$
est plus grossière (et, en général, {\em strictement} plus grossière)
que la topologie induite par celle $\mathscr T(M')$. Si $B$ est une structure telle
que $A\subseteq B$, on dira qu'un sous-foncteur $B$-définissable $U$ de $D_B$ est 
un {\em ouvert $B$-définissable} si $U_M$ est un ouvert définissable de $D_M$
pour tout $M\in \mathsf M_B$ -- il suffit que ce soit le cas pour {\em un} tel~$M$. 

Si $D'$ est un foncteur $A$-définissable muni d'une topologie définissable $\mathscr T'$,
une transformation naturelle $A$-définissable $f:D\to D'$ est dite {\em continue} si pour
tout $M\in \mathsf M_A$ et tout $U\in \mathscr T'_M$, le foncteur $f^{-1}(U)$ appartient à 
$\mathscr T_M$. Il revient au même de demander que $f(M) : D(M)\to D'(M)$
soit continue pour {\em tout} modèle $M\in \mathsf M_A$.

Soit $M\in \mathsf M_A$, soit $t$ un type sur $D_M$ et soit 
$x\in D(M)$. On dit que $x$ {\em adhère à $t$} si pour 
tout $U\in \mathscr T_M$ tel que $x\in U(M)$, 
le type $t$ est situé sur $U$.

On dira que $D$ est {\em définissablement compact}
si pour tout 
$M\in \mathsf M_A$ et tout type {\em $M$-définissable} $t$ sur $D_M$, 
il existe un unique $x\in D(M)$ qui est adhérent à $t$. Cela revient en quelque sorte
à demander que «tout ultra-filtre~$M$-définissable de
parties~$M$-définissables de~$D(M)$ converge», {\em cf.} rem.~\ref{compultraf}.  

Si $D$ est définissablement
compact et si $\Delta$ est un sous-foncteur $A$-définissable de $D$, alors
$\Delta$ est un fermé $A$-définissable ({\em i.e.} le complémentaire d'un ouvert
$A$-définissable) si et seulement si il est définissablement compact. 

\begin{exem}\label{topdef} On se place dans la théorie des corps 
réels clos. Soit $R_0$ un corps réel clos et soit $X$ un $R_0$-schéma de type fini. 
Pour tout corps réel clos $R$ contenant $R_0$, notons 
$\mathscr T_R$ la famille des sous-foncteurs $R$-définissables
de $X_R$ définis Zariski-localement sur $X_R$ par une combinaison
booléenne positive d'inégalités {\em strictes} entre fonctions régulières. La donnée
des $\mathscr T_R$ constitue une topologie définissable sur $X$ et en induit une, 
par restriction, sur tout sous-foncteur $R_0$-définissable de $X$. Si $R$ est un
corps réel clos contenant $R_0$, la topologie $\mathscr T(R)$ 
sur $X(R)$ est celle déduite de la topologie de corps ordonné de $R$. Remarquons
que si $R'$ est un corps réel clos contenant $R$ et s'il existe un élément strictement
positif $\epsilon$ de $R'$ qui est inférieur à $x$ pour tout $x>0$ dans $R$, la topologie
de $X(R)$ induite par $\mathscr T(R')$ est la topologie {\em discrète} : en effet, 
pour tout $x\in R$ on a $\{x\}=\{y\in R',-\epsilon <y-x<\epsilon\}$. 

Si $X={\mathbb A}^n_{R_0}$ et si $Y$ est un sous-foncteur
$R$-définissable de $X$ alors $Y$ est définissablement compact 
si et seulement si il est fermé (autrement dit, $X\setminus Y\in \mathscr T_{R_0}$)
et borné, c'est-à-dire contenu dans $[-x;x]_{R_0}^n$ pour un certain $x>0$ dans
$R_0$. 

Donnons un exemple de point adhérent à un type. On suppose maintenant
que~$X$ est égal à ${\mathbb A}^1_{R_0}$. Le point $0\in {\mathbb A}^1(R_0)=R_0$ est alors
adhérent au type $0_{R_0}^+$ (et c'est le seul point de $R_0$ dans ce cas).
On voit ainsi pourquoi 
$]0;1[_{R_0}$ n'est pas définissablement compact : $0_{R_0}^+$ est situé sur $]0;1[_{R_0}$,
mais aucun~$R_0$-point de ce dernier n'adh\`ere \`a $0^+_{R_0}$. 
Insistons à ce propos sur l'importance, 
pour la compacité définissable, de se limiter à manipuler des 
types {\em définissables}. Pour
s'en convaincre, il suffit de considérer le cas où $R_0$ est la fermeture algébrique de $\mathbb Q$ 
dans $\mathbb R$. Le sous-foncteur $R_0$-définissable
$[0;4]_{R_0}$ de $\mathbb A^1_{R_0}$ est définissablement
compact. Le nombre réel $\pi$ induit un type $t$ sur $[0;4]_{R_0}$
qui n'est pas $R_0$-définissable... et de fait, aucun $R_0$-point 
de $[0;4]_{R_0}$ n'adh\`ere \`a $t$.

\end{exem}

\subsection{Types stablement dominés ; le cas de ACVF}\label{typstabdom}

Commençons par une remarque, qui sera implicitement 
utilisée dans toute la suite du texte. Soit $F$ un modèle de ACVF, et
soit $D$ un foncteur $\abs F$-définissable dans la théorie des
groupes abéliens divisibles ordonnés non triviaux (resp. un foncteur $\tilde F$-définissable
dans la théorie des corps algébriquement clos). Soit $D'$ le foncteur 
qui envoie $L\in \mathsf M_F$ sur $D(\abs L)$ (resp. $D(\tilde L)$). Il
résulte alors de la remarque~\ref{stablyemb} qu'il existe une bijection canonique
$S(D)\simeq S(D')$, 
et qu'un type sur $D'$ est $F$-définissable
si et seulement si le type correspondant sur $D$ est $\abs F$-définissable
(resp. $\tilde F$-définissable). 

\medskip
Venons-en maintenant à la stabilité. 
Il n'est pas question de donner des définitions précises
en la matière ; nous renvoyons le lecteur intéressé
à l'ouvrage \cite{haskell-hrush-macph2008} de Haskell, Hrushovski 
et Macpherson. Nous allons nous contenter de quelques
explications très sommaires. 

1) Il existe différentes définitions
équivalentes d'une théorie {\em stable}. L'une d'elles
consiste à exiger «qu'elle ne contienne pas d'ensemble ordonné infini» ; 
une autre requiert que pour tout modèle $M$ et tout 
foncteur $M$-définissable $D$, le cardinal de $S(D_{M'})$
croisse raisonnablement en fonction de celui du modèle $M'\supseteq M$. La théorie
des corps algébriquement clos est stable. La théorie des 
groupes abéliens divisibles ordonnés non triviaux,
la théorie des corps réels
clos et la théorie ACVF ne sont pas stables : chacune
d'elles «contient un ensemble ordonné infini». 

2) Une théorie $T$ possède toutefois une sorte de
«plus grande sous-théorie stable» $T_{\rm stab}$. 
Si $T$ est la théorie des
groupes abéliens divisibles ordonnés non triviaux, 
$T_{\rm stab}$ ne voit que
les foncteurs définissables {\em finis} 
(c'est-à-dire dont l'ensemble des points
à valeur dans un modèle donné, et donc dans tout modèle, 
est fini). 
Si $T={\rm ACVF}$, la théorie $T_{\rm stab}$ est essentiellement
la théorie de la sorte «corps résiduel» 
(qui s'identifie à celle des corps algébriquement clos). 

3) Un type définissable dans une 
théorie $T$ est dit {\em stablement dominé} s'il est déterminé
par sa trace sur la plus grande sous-théorie stable
de $T$. 

\medskip
Un type simple est toujours stablement dominé. 
La réciproque est parfois vraie. Ainsi, si $T$ est la théorie des
groupes abéliens divisibles ordonnés non triviaux, et si 
$D$ est un foncteur $M$-définissable pour un certain modèle
$M$ de $T$, un type stablement dominé sur $D$ est un type
$M$-définissable
$t$ qui doit être caractérisé par $f(t)$ pour une certaine transformation
naturelle $M$-définissable de $D$ vers un foncteur
$M$-définissable {\em fini} $D'$ ; il n'est pas difficile
de voir que cela équivaut à demander
que $t$ soit {\em simple}. La théorie des types stablement
dominés est donc triviale dans la théorie $T$.

Il n'en va pas de même dans la théorie ACVF. Donnons un exemple de type
stablement dominé qui n'est pas simple. On fixe un modèle $F$. On dispose
d'une transformation naturelle $F$-définissable $\rho$ de $b(0,1)_F$
 vers $\mathbb{A}^1_{\tilde F}$
(la réduction modulo l'idéal maximal). Soit $t\in S(  b(0,1)_F)$. 
On vérifie aisément
que $t=\eta_{0,1,F}$ si et seulement si $\rho(t)$
est le type correspondant au point générique de $\mathbb{A}^1_{\tilde F}$. 
Comme $\mathbb{A}^1_{\tilde F}$
«appartient» à ${\rm ACVF}_{\rm stab}$, le type $\eta_{0,1,F}$ est contrôlé par
sa trace sur  ${\rm ACVF}_{\rm stab}$, et est dès lors
stablement dominé.  

En fait, on dispose dans la théorie ACVF d'un critère permettant de décider si
un type définissable est stablement dominé, que nous allons
maintenant énoncer ; on rappelle 
que $\Gamma_0$ désigne le foncteur $F\mapsto \abs F$.

\begin{prop}\label{equivstabdom} Soit $F$ un
modèle de ACVF, soit $D$ un foncteur $F$-définissable
et soit $t$ un type $F$-définissable sur $D$. Le type $t$ est 
stablement
dominé si et seulement si il est {\em orthogonal à $\Gamma_0$},
c'est-à-dire si pour tout
modèle $F'\supseteq F$ et toute transformation 
naturelle $F'$-définissable
$f: D_{F'}\to \Gamma_{0,F'}$, le type image $f(t)$ est simple. 
\end{prop}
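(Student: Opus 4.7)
Le plan consiste à traiter séparément les deux implications. Pour la direction facile (stablement dominé $\Rightarrow$ orthogonal à $\Gamma_0$), on s'appuie sur le fait que la domination stable est préservée par image directe sous une transformation naturelle définissable, combiné au fait que la «partie stable» de DOAG est triviale. Pour la direction difficile, qui porte le véritable contenu de l'énoncé, l'hypothèse d'orthogonalité doit forcer toute l'information du type à vivre dans la sorte «corps résiduel», laquelle est stable.

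\medskip
\emph{Direction $\Rightarrow$.} Soit $t$ stablement dominé, et soit $f: D_{F'} \to \Gamma_{0,F'}$ une transformation naturelle $F'$-définissable pour un $F' \supseteq F$. Alors $f(t)$ est encore stablement dominé, car la domination stable se transporte par image directe : si $t$ est déterminé par sa trace sur la partie stable, il en va de même pour $f(t)$. Or la trace de ACVF sur la sorte $\Gamma_0$ coïncide avec DOAG (remarque~\ref{stablyemb}), théorie dans laquelle les ensembles définissables stables sont finis ; la seule notion de type stablement dominé à laquelle cela laisse place est celle de type simple (un type $F'$-définissable orthogonal à lui-même dans DOAG est contraint de se concentrer en un élément de $\abs{F'}$). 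Donc $f(t)$ est simple.

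\medskip
\emph{Direction $\Leftarrow$.} Soit $a$ une réalisation de $t$ dans une extension suffisamment saturée $F^*$ de $F$. L'hypothèse d'orthogonalité signifie que pour toute transformation naturelle définissable $g: D_{F^*} \to \Gamma_{0,F^*}$, l'élément $g(a)$ appartient à $\abs{F^*}$ : la projection de $a$ sur tout paramètre à valeurs dans $\Gamma_0$ n'apporte aucune information nouvelle au-delà de celle déjà présente dans $F^*$. En utilisant l'élimination des imaginaires de Haskell-Hrushovski-Macpherson dans le langage $\mathscr{L}^*_{\rm val}$, toute information définissable portée par $a$ se laisse décomposer en une composante à valeurs dans $\Gamma_0$ et une composante à valeurs dans les sortes géométriques $\mathcal{S}_n$ et $\mathcal{T}_n$ (qui se réduisent, via $\tau_n$, à des données vivant essentiellement dans la sorte résiduelle). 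L'orthogonalité annule la première composante ; il reste que $\mathrm{tp}(a/F)$ est entièrement déterminé par la trace de $a$ sur les sortes stables, ce qui est précisément la définition de la domination stable.

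\medskip
\emph{Obstacle principal.} Toute la difficulté se concentre dans la direction $\Leftarrow$, et plus précisément dans la justification rigoureuse de cette «décomposition» d'une formule arbitraire en une partie à valeurs dans $\Gamma_0$ et une partie à valeurs dans la sorte résiduelle. C'est le cœur de \cite{haskell-hrush-macph2008} (chapitre 10) : il y est démontré, à partir d'une étude fine des types génériques de sous-ensembles définissables de $\mathrm{GL}_n$ et de leurs quotients par $\mathrm{GL}_n\zero$ et $\mathrm{GL}_n\zeroo$, qu'un type $F$-définissable dans ACVF est contrôlé conjointement par sa projection sur $\Gamma_0$ et par sa partie «résiduelle». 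Une fois cet outil en main, l'argument ci-dessus fonctionne ; mais l'outil lui-même requiert la machinerie développée dans tout l'ouvrage \cite{haskell-hrush-macph2008}, ce qui en fait véritablement l'obstacle central.
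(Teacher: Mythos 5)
The paper itself states this proposition without proof, referring the reader to Haskell--Hrushovski--Macpherson \cite{haskell-hrush-macph2008}; so you are not deviating from the paper's strategy by deferring the hard work to that reference. Your easy direction ($\Rightarrow$) is substantially correct: pushforward preserves stable domination (this follows directly from the definition of stable domination, not from the remark~\ref{stabdomfonct}, which the paper derives \emph{from} the present proposition --- invoking that remark here would be circular), and a stably dominated type on a $\Gamma_0$-set must be simple because the stable part of DOAG only sees finite sets. That is the argument the paper itself sketches a few lines before the proposition.

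For the hard direction ($\Leftarrow$) your outline is qualitatively right in that the content is Chapter 10 of \cite{haskell-hrush-macph2008}, but the specific decomposition you describe is inaccurate. You assert that the geometric sorts $\mathcal{S}_n$ and $\mathcal{T}_n$ ``se réduisent, via $\tau_n$, à des données vivant essentiellement dans la sorte résiduelle''; this is not true. The fibres of $\tau_n : \mathcal{T}_n \to \mathcal{S}_n$ are indeed residue-field objects (bases of a $\tilde{F}$-vector space), but $\mathcal{S}_n$ itself --- the space of lattices in $F^n$ --- carries essential $\Gamma$-structure: its $\emptyset$-definable invariants include elementary-divisor data in $\Gamma^n$, and for $n\geq 2$ it is definably infinite over the residue sort alone. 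So one cannot simply say ``orthogonality kills the $\Gamma_0$ component and what remains is residue-field''. The actual HHM argument is more delicate: it proceeds via an analysis of generically stable types, the theory of invariant extensions and ``sequential domination'', and a descent theorem, all of which intertwine the $\Gamma$ and residue data rather than cleanly separating them. Your formulation of the orthogonality hypothesis in terms of a single realization $a$ in a saturated $F^*$ is also slightly off: orthogonality to $\Gamma_0$ concerns the image \emph{types} $f(t)$ being simple over each $F' \supseteq F$, not membership $g(a)\in\abs{F^*}$ for a chosen realization (which holds vacuously once $a\in D(F^*)$). None of this is fatal to your account, since you explicitly flag the hard direction as the obstacle and point to the correct reference; but the reader should not come away thinking the decomposition is as clean as you describe it.
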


\begin{rema}\label{remstabdom} 
Supposons que $D$ soit un sous-foncteur d'un $F$-schéma de type
fini~$X$. Le type $t$ correspond alors à un point $(x,\abs.)$ du spectre
valuatif de $X$, et la condition énoncée dans la proposition
ci-dessus équivaut
simplement
à l'égalité $\abs{F(x)}=\abs F$. 
\end{rema}

\begin{rema}\label{stabdomfonct}
Soit $F$ un
modèle de ACVF, soit $D$ un foncteur $F$-définissable, et soit 
$t$ un type stablement dominé sur $D$. Il résulte immédiatement
de la proposition ci-dessus que : 

$\bullet$ si $f$ est une transformation naturelle $F$-définissable de 
$D$ vers un foncteur \mbox{$F$-définissable,} le type $f(t)$ est stablement dominé ; 

$\bullet$ si $F'\in \mathsf M_F$, l'extension canonique de $t$ au modèle $F'$
est stablement dominée.

\end{rema}

On a vu plus haut que $\eta_{0,1,F}$ est stablement dominé. On peut retrouver
ce fait grâce à la proposition~\ref{equivstabdom} et à la remarque~\ref{remstabdom} ; 
nous allons les utiliser pour vérifier plus généralement
que $\eta_{\lambda,r,F}$ est
stablement dominé pour tout $\lambda\in F$ et tout $r\in \abs F$. Si $r=0$
alors $\eta_{\lambda,r,F}$ est le type simple induit
par le point $\lambda$ de $\mathbb{A}^1_F(F)=F$, et il est donc stablement
dominé. Supposons $r>0$. Le point $\eta_{\lambda,r,F}$
est alors le type associé au point du spectre valuatif
de $\mathbb{A}^1_F$ défini par la valuation~$\abs . : F(u)\to \abs F$ qui envoie
$\sum a_i(u-\lambda)^i$ sur $\max |a_i|\cdot r^i$. Comme
$\abs{F(u)}=\abs F$, le type  $\eta_{\lambda,r,F}$ est 
stablement dominé. 

Nous allons {\em a contrario} vérifier que si $r>0$ alors
$\eta_{\lambda,r^-,F}$ n'est pas stablement dominé. En effet,
il est associé au point du spectre valuatif
de $\mathbb{A}^1_F$ défini par la 
valuation~$\abs . : F(u)\to \abs F$ qui envoie
$\sum a_i(u-\lambda)^i$ sur $\max |a_i|\cdot \omega^ir^i$
(avec les notations de l'exemple (3) de \ref{typegeneral}).
On a donc $\abs{F(u)}=\omega^{\mathbb Z}\cdot \abs F\supsetneq \abs F$ ; 
par conséquent, le type  $\eta_{\lambda,r^-,F}$ est 
stablement dominé. Il n'est pas difficile, dans ce dernier cas, 
d'exhiber une transformation naturelle définissable
$f: \mathbb{A}^1_F\to \Gamma_0$ telle que
$f(\eta_{\lambda, r^-,F})$ ne soit pas simple. Il suffit de remarquer
que $\abs{u-\lambda}=\omega$, et de traduire ce fait. On note $f$ la 
transformation $F$-définissable de $\mathbb{A}^1_F\to \Gamma_{0,F}$
donnée par la formule $x\mapsto \abs{x-\lambda}$. Sa valeur en $\eta_{\lambda, r^-,F}$
est alors par la remarque qui précède
le type sur $\Gamma_{0,F}$ induit par $\omega$, qui n'est pas simple : 
ce type n'est autre que $1_{\abs F}^-$, qui est caractérisé par le fait 
qu'il est situé sur $]\epsilon ; 1[_{\abs F}$ pour tout $\epsilon <1$ dans
$\abs F$.

\section{Espaces chapeautés et énoncé du théorème principal}

{\em Références :} la référence principale est bien entendu l'article \cite{hrushovski-loeser2010}
lui-même, mais le lecteur pourra consulter avec profit les transparents d'exposés de Hrushovski 
(\cite{notes-hrush2011}). 

\medskip
On travaille dans la théorie ACVF. On 
fixe {\em pour toute la suite du texte} un corps valué $(k,G_0)$. On se donne
une clôture algébrique $k^a$ de $k$, et l'on note simplement $\mathsf M$ la catégorie
$\mathsf M_{k^a,G_0^{\mathbb Q}}$. Si $A$ est une sous-structure de
$(k^a,G_0^{\mathbb Q})$, un {\em foncteur $A$-définissable} désignera ici la restriction 
à $\mathsf M$ d'un foncteur $A$-définissable au sens précédemment utilisé ; on note
encore 
$\overline B$ et $\Gamma_0$ les restrictions respectives à $\mathsf M$ des foncteurs 
$\overline B$ (qui envoie $F$ sur $\{b(\lambda,r)_F\}_{\lambda\in F,r\in \abs F}$) et $\Gamma_0$
(qui envoie $F$ sur $\abs F$). De même, si $X$ est une $k$-variété algébrique, on 
note encore $X$ le foncteur qu'elle induit sur $\mathsf M$. 

\subsection{Les espaces chapeautés : définition, exemples de base, premières propriétés}

Soit $D$ un foncteur $(k,G_0)$-définissable, soit $F\in \mathsf M$ et soit $F'\in \mathsf M_F$. 
La remarque~\ref{stabdomfonct} assure que l'ensemble des types stablement dominés sur $D_F$ se
plonge, {\em via} l'opération d'extension canonique au modèle $F'$, dans celui des types stablement dominés
sur $D_{F'}$. Ainsi, la formation de l'ensemble des types stablement dominés sur $D_F$ est
{\em fonctorielle en $F$.}

\begin{defi}\label{defchapeau} Soit $D$ un foncteur $(k,G_0)$-définissable. On note $\widehat D$
le foncteur qui associe à un modèle $F\in \mathsf M$ l'ensemble des types stablement dominés
sur $D_F$.

\end{defi} 

La remarque~\ref{stabdomfonct} assure que la formation de $\widehat D$ est fonctorielle en $D$, 
pour les transformations naturelles $(k,G_0)$-définissables. 

\medskip
{\em Commentaires et premiers exemples.} Si $F\in \mathsf M$, tout point de $D(F)$
définit un type simple, et partant stablement dominé, 
sur $D_F$. On dispose par ce biais d'un plongement
naturel $D\hookrightarrow \widehat D$.

Supposons que $D$ «vive dans la sorte $\Gamma_0$», c'est-à-dire 
soit de la forme $\Delta\circ \Gamma_0$, où $\Delta$ est un foncteur $G_0$-définissable
dans la théorie DOAG ; c'est par exemple le cas dès que $D$ est 
un sous-foncteur de $\Gamma_0^n$. Dans ce cas, pour tout $F\in \mathsf M$, 
les types stablement dominés sur $D_F$ sont exactement les types 
simples : autrement dit, le plongement $D\hookrightarrow \widehat D$ est 
bijectif. On montre plus généralement que si $D'$ est un foncteur
$(k,G_0)$-définissable, le foncteur $\widehat {D'\times D}$ s'identifie
naturellement à $\widehat {D'}\times D$. 

Nous allons maintenant décrire $\ahat k$. Pour tout,
$F\in \mathsf M$ on note $\iota(F)$ l'application
qui envoie une boule fermée $b(\lambda,r)_F\in \overline B(F)$ 
sur son type générique $\eta_{\lambda, r,F}$. Les types de la forme
$\eta_{\lambda, r,F}$ étant stablement dominés, 
on définit ainsi un plongement 
$\iota : \overline B \hookrightarrow \ahat k$, dont
on démontre qu'il est {\em bijectif}. Par conséquent, 
$\ahat k\simeq \overline B$. Comme $\overline B$ est $(k,G_0)$-définissable, 
il en va de même de $\ahat k$. 

Ce dernier fait se généralise en partie. Avant de formuler l'énoncé correspondant, 
donnons une définition. Soit $X$ une $k$-variété algébrique, et soit $U$ un sous-foncteur
$(k,G_0)$-définissable de $X$ ; cela signifie, rappelons-le,
que $U$ est défini Zariski-localement
sur $X$ par une combinaison booléenne d'inégalités 
de la forme $\abs f\Join \lambda \abs g$, où $f$ et $g$
sont des fonctions régulières, où $\lambda\in G_0$ et
où $\Join\in\{<,>,\leq,\geq\}$. Supposons que $U$
est non vide et soit $F\in \mathsf M$. Le plus grand entier $n$
pour lequel il existe un {\em plongement} $F$-définissable de $b(0,1)_F^n$ 
dans $U_F$ ne dépend pas de $F$, et est appelé la {\em dimension} de~$U$. 

\begin{prop}\label{prodef} Soit $U$ comme ci-dessus. Le foncteur $\widehat U$
est alors $(k,G_0)$-{\em prodéfinissable} ; il est $(k,G_0)$-définissable si et seulement si
il est de dimension inférieure ou égale à $1$. 
\end{prop}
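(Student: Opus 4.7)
La démonstration se décompose naturellement en trois parties indépendantes : prodéfinissabilité générale, définissabilité en dimension au plus 1, et non-définissabilité en dimension strictement supérieure à 1.

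\medskip

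Pour la \emph{prodéfinissabilité}, le principe est d'associer à tout $t\in \widehat U(F)$ son \emph{schéma de définition}. Pour chaque formule $\varphi(x,\underline y)$ du langage $\mathscr L^*_{\rm val}$, où $x$ parcourt les sortes ambiantes de $U$ et $\underline y$ un tuple de sortes fixées, la définissabilité de $t$ (conséquence de la stable dominance) fournit une formule $\psi_\varphi(\underline y)$ à paramètres dans $(k,G_0)$ telle que $t$ soit situé sur le sous-foncteur défini par $\varphi(\cdot,\underline a)$ si et seulement si $\psi_\varphi(\underline a)$ est vraie. La famille $(\psi_\varphi)_\varphi$ caractérise $t$, et par élimination des imaginaires dans $\mathscr L^*_{\rm val}$, chaque $\psi_\varphi$ est codée par un tuple de paramètres vivant dans un foncteur $(k,G_0)$-définissable $E_\varphi$ dépendant de $\varphi$. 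Ceci fournit un plongement de $\widehat U$ dans le produit $\prod_\varphi E_\varphi$, soit précisément la prodéfinissabilité voulue.

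\medskip

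En \emph{dimension au plus 1}, on se ramène au cas où $U$ est un sous-foncteur $(k,G_0)$-définissable de $\mathbb{A}^1_k$ en passant à une compactification projective puis à une carte affine. L'isomorphisme $\ahat k\simeq \overline B$ rappelé dans l'introduction, joint à la $(k,G_0)$-définissabilité de $\overline B$, montre directement la définissabilité de $\widehat U$ comme sous-foncteur de $\ahat k$ découpé par les inégalités qui définissent $U$. Pour une courbe quelconque, on fait appel à un morphisme fini vers $\mathbb{P}^1_k$ et au contrôle définissable (cardinal borné) des fibres de l'application $\widehat f$ induite pour transporter la définissabilité au niveau de $\widehat X$.

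\medskip

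L'assertion principale est la non-définissabilité en \emph{dimension $\geq 2$}. Il suffit de traiter $U=\mathbb{A}^2_k$. L'idée est qu'à toute courbe algébrique irréductible $C\subset \mathbb{A}^2_F$, on associe le point du spectre valuatif de $\mathbb{A}^2_F$ donné par le point générique de $C$ muni de la valuation triviale sur $F(C)/F$ ; on a alors $\abs{F(C)}=\abs F$, donc cela fournit un élément de $\widehat{\mathbb{A}^2_k}(F)$ par la remarque~\ref{remstabdom}. L'application ainsi obtenue, de l'ensemble des courbes irréductibles vers $\widehat{\mathbb{A}^2_k}(F)$, est injective car deux courbes distinctes ne satisfont pas aux mêmes équations polynomiales génériques. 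Or la collection de ces courbes, union sur $d$ des schémas de Hilbert $\mathrm{Hilb}^d$, n'est pas $(k,G_0)$-définissable, le degré n'y étant pas borné. L'obstacle principal résidera dans la formalisation rigoureuse de cette dernière étape : une éventuelle identification $\widehat{\mathbb{A}^2_k}\simeq E$ avec $E$ définissable forcerait, par composition avec l'application précédente, une définissabilité uniforme de la famille des courbes de tout degré, ce qui est impossible. Cela demandera vraisemblablement un argument soigné de compacité modèle-théorique, ou une analyse directe de la complexité des types stablement dominés sur~$\mathbb{A}^2$.
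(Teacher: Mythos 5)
Votre décomposition en trois volets (prodéfinissabilité, définissabilité en dimension $\leq 1$, non-définissabilité en dimension $\geq 2$) est la bonne, mais les trois traitements s'écartent chacun de l'exposé, et deux d'entre eux ont de vraies lacunes.

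\textbf{Prodéfinissabilité.} Votre approche par «~schémas de définition~» est celle du texte, mais vous escamotez précisément le point que l'exposé met en avant comme crucial : la \emph{propriété d'uniformité}. La définissabilité d'un type $t$ donne bien, pour chaque $\varphi$, une formule $\psi_\varphi$, mais \emph{a priori} celle-ci varie avec $t$ de façon non contrôlée ; rien, dans l'élimination des imaginaires seule, ne garantit que les $\psi_\varphi$ parcourent, lorsque $t$ varie parmi les types stablement dominés, une famille $(k,G_0)$-définissable $E_\varphi$ ne dépendant que de $\varphi$. C'est exactement ce que dit le lemme d'uniformité invoqué par Hrushovski et Loeser (et rappelé dans le texte juste après la proposition) ; c'est lui qui est spécifique aux types définissables orthogonaux à $\Gamma_0$ et qui fait fonctionner l'argument. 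Votre preuve affirme le résultat de ce lemme sans le justifier, ce qui est précisément le contenu non trivial de l'étape.

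\textbf{Dimension $\leq 1$.} Vous prenez une route différente de l'exposé : morphisme fini vers $\mathbb{P}^1_k$ et contrôle du cardinal des fibres de $\widehat f$. L'exposé, lui, passe par le théorème de Riemann-Roch : $F(X)$ est multiplicativement engendré par les fonctions à au plus $g+1$ pôles, de sorte qu'un type stablement dominé sur une courbe est caractérisé par sa trace sur une famille \emph{définissable} de fonctions, d'où un plongement définissable de $\widehat X$ dans un foncteur définissable. Votre argument «~fibres finies donc transfert de la définissabilité~» n'est pas automatique : une application définissable à fibres finies bornées d'un foncteur vers un foncteur définissable ne rend pas la source définissable sans travail supplémentaire (il faut coder les fibres, et faire cela uniformément) ; la justification propre de ce transfert mobilise en fait le même type de finitude que Riemann-Roch.

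\textbf{Dimension $\geq 2$.} L'exposé n'esquisse pas cette direction, donc vous suppléez, mais votre construction a deux difficultés. D'abord, «~la valuation triviale sur $F(C)/F$~» n'a pas de sens tel quel : ce que vous voulez est une valuation sur $F(C)$ prolongeant celle de $F$ avec $|F(C)|=|F|$, et il en existe une infinité non canonique, donc l'application «~courbe $\mapsto$ type~» n'est pas bien définie sans choix supplémentaire. Ensuite, l'argument «~si $\widehat{\mathbb{A}^2_k}$ était définissable, la famille des courbes de tout degré le serait~» ne s'ensuit pas directement : un définissable peut avoir des sous-ensembles non définissables. Pour le rendre correct, il faut passer par l'application support $\widehat{\mathbb{A}^2_k}\to S(\mathbb{A}^2)$ vers le foncteur \emph{ind-définissable} des points du schéma $\mathbb{A}^2$, observer que la définissabilité de $\widehat{\mathbb{A}^2_k}$ forcerait par compacité son image à tomber dans une pièce définissable de cet ind-définissable, donc à ne contenir que des points de complexité bornée — contradiction. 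Vous gesticulez vers cela en fin de paragraphe, mais c'est justement là que se situe tout l'argument.
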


Faisons quelques commentaires. Dire qu'un foncteur est $(k,G_0)$-prodéfinissable
signifie qu'il 
est isomorphe à une limite projective de foncteurs $(k,G_0)$-définissables --
on demande de surcroît que l'ensemble d'indices ne soit 
pas trop gros, dans un sens que nous ne préciserons pas ici.
Bien entendu, pour que la limite projective en question soit canonique, il faut,
à l'instar de ce qu'on a vu dans le cas définissable, 
se donner un peu plus qu'un foncteur sur $\mathsf M$ : 
le foncteur en question doit se mettre en familles de façon évidente. 
C'est le cas en ce qui concerne~$\widehat U$
-- on dispose d'une définition naturelle de famille définissable
de types stablement dominés. 

Une bonne partie de ce qu'on a vu à propos des foncteurs
définissables s'étend au cadre des foncteurs
prodéfinissables, comme la notion de transformation 
définissable, 
celle de type, ou celle de type définissable. Si $D$ est un foncteur
$(k,G_0)$-prodéfinissable, un sous-foncteur $D'$ de $D$ sera dit : 

- {\em $(k,G_0)$-isodéfinissable} s'il existe un foncteur $(k,G_0)$-définissable
$\Delta$, et une transformation $(k,G_0)$-définissable $\Delta\to D$ qui induit
un isomorphisme $\Delta\simeq D'$ ; 

- {\em relativement $(k,G_0)$-définissable} s'il existe un foncteur $(k,G_0)$-définissable
$\Delta$, un sous-foncteur $(k,G_0)$-définissable $\Delta'$ de $\Delta$, et une 
transformation naturelle $(k,G_0)$-définissable $f: D\to \Delta$ telle que $D'=f^{-1}(\Delta')$. 

Par exemple, la transformation
naturelle $U\hookrightarrow \widehat U$ est $(k,G_0)$-définissable. Son image
(que l'on identifiera à $U$) est donc $(k,G_0)$-isodéfinissable ; on montre 
qu'elle est aussi~$(k,G_0)$-relativement définissable. Si~$V$ est 
un sous-foncteur~$(k,G_0)$-définissable de $V$ alors $\widehat V$
est relativement $(k,G_0)$-définissable dans $\widehat U$. 

\begin{rema} On montre plus précisément que le foncteur $\widehat U$ est 
{\em strictement}~$(k,G_0)$-prodéfinissable : cela signifie que pour toute transformation $(k,G_0)$-définissable
$f$ de $\widehat U$ vers un foncteur~$(k,G_0)$-définissable $D$, le foncteur image
$f(\widehat U)$ est $(k,G_0)$-définissable. 
\end{rema}

\medskip
Disons maintenant quelques mots des preuves. 

$\bullet$ La pro-définissabilité de $\widehat U$ est établie à l'aide
d'arguments
qui n'ont rien de spécifique à ACVF et s'appliquent
à bien d'autres théories. Ils
reposent pour l'essentiel sur
une propriété très générale
d'uniformité, qui dans 
le cas qui nous préoccupe dit
en gros la chose suivante : 
si, avec les notations de la proposition~\ref{deftypedef}, on fixe~$D'$ et~$\Delta$
et fait parcourir à~$t$ 
l'ensemble des types définissables et orthogonaux à $\Gamma_0$ sur $D$,
 alors le sous-ensemble
définissable de $D'(M)$ 
fourni par~{\em loc. cit.}
évolue au sein
d'une famille
$M$-définissable. 

$\bullet$ La {\em stricte} prodéfinissabilité de $\widehat U$
repose, 
{\em via} le fait qu'un type stablement dominé est contrôlé
par sa trace sur la sorte «corps résiduel», sur une propriété
de la théorie des corps algébriquement clos  : si $X$ 
est un foncteur définissable dans cette théorie, le foncteur
des types
(définissables) sur $X$ est {\em ind-définissable}. Pour se convaincre
de ce dernier point, rappelons que
si $X$ est un schéma, les types sur $X$ correspondent bijectivement
aux points du schéma $X$, donc aux fermés irréductibles de $X$
-- d'où un foncteur ind-définissable : on se ramène
au cas affine, on fixe un «degré» et un nombre
d'équations,  on fait varier
les coefficients de celles-ci, on ne garde que celles
qui décrivent un fermé irréductible, 
et l'on quotiente par la relation d'équivalence
qui identifie deux systèmes d'équations définissant le même fermé. 

$\bullet$ La définissabilité de $\widehat U$ en dimension $\leq 1$
est quant à elle une conséquence du résultat de finitude suivant, qui
lui-même résulte du théorème de Riemann-Roch : si $X$ est une courbe 
projective, irréductible et lisse de genre $g$ sur un corps algébriquement clos $F$,
le corps $F(X)$ est engendré multiplicativement par les fonctions
ayant au plus~$g+1$
pôles (avec multiplicités) ; cela généralise le fait 
que les polynômes de degré $1$ en $u$
engendrent multiplicativement $F(u)$. 

{\em À propos des fibres d'applications chapeautées.}  Soient $D$ et $D'$ deux foncteurs
$(k,G_0)$-définissables,
soit $f: D\to D'$ une transformation naturelle 
$(k,G_0)$-définissable et soit $\widehat f : \widehat D\to \widehat{D'}$ la
transformation induite. 
Soit $F\in \mathsf M$. Si $x\in \widehat{D'}(F)$, 
la fibre $\widehat f _F^{-1}(x)$ est un sous-foncteur bien défini de $\widehat D_F$. 

Supposons que $x\in D'(F)$, c'est-à-dire encore que $x$
est un type simple. Il n'est alors pas difficile de voir que
$\widehat f_F^{-1}(x)$ s'identifie naturellement à 
$\widehat{f_F^{-1}(x)}$. 

Par contre, on prendra garde que si $x$ n'est pas simple, 
le foncteur $\widehat f_F^{-1}(x)$ {\em ne s'interprète pas en général
comme un espace chapeauté.} Ce phénomène qui 
peut sembler 
un peu désagréable 
-- les fibres ne sont pas toutes des objets de la théorie -- n'est pas lié
à une lacune dans les définitions, mais à une «vraie pathologie»~de la
théorie des valuations, dont nous allons décrire une manifestation. Il est possible
d'exhiber deux modèles $F\subseteq F'$ de ACVF et un couple $(x,y)\in (F')^2$
tels que les
propriétés suivantes soient satisfaites : 

- le type sur ${\mathbb A}^1_F$ induit par $x$ est stablement dominé ; 

- le type sur ${\mathbb A}^2_F$ induit par $(x,y)$ est stablement dominé ; 

- le type sur ${\mathbb A}^1_{F(x)^a}$ induit par $y$ {\em n'est pas} $F(x)^a$-définissable,
et {\em a fortiori} pas stablement dominé 
(on désigne par $F(x)^a$ la fermeture algébrique de $F(x)$ dans $F'$). 

\medskip
Cela dit, ce défaut n'est pas rédhibitoire. Nous verrons qu'il n'interdit
pas de raisonner par fibrations ; simplement, il contraint à 
un certain nombre
de contorsions techniques lorsqu'on veut le faire, puisqu'on ne 
sait travailler que sur les fibres en les points simples. 

\subsection{La topologie sur $\widehat U$}

Pour tout $n$, on munit le foncteur $(k,G_0)$-définissable
$\Gamma_0^n$ de la topologie définissable pour laquelle 
les ouverts $F$-définissables de $\Gamma_{0,F}^n$ sont, pour
tout $F\in \mathsf M$, les sous-foncteurs de~$\Gamma_{0,F}^n$
qui peuvent être définis par une combinaison booléenne positive
d'inégalités de la 
forme~$a\prod x_i^{n_i}<b\prod x_i^{m_i}$ où $a$ et $b$ appartiennent à 
$\abs F$. Cette topologie possède les propriétés intuitives attendues : 
si $D$ est un sous-foncteur~$(k,G_0)$-définissable de~$\Gamma_0^n$, 
il est définissablement compact si et seulement si il est borné et
peut être décrit par une combinaison booléenne positive
d'inégalités {\em larges}
entre monômes.
Cette topologie en induit une sur tout segment généralisé, et plus généralement
sur tout graphe fini (pour une définition,
voir la fin de~\ref{Ensfonct}). Un graphe fini est définissablement 
compact.

La notion de topologie définissable sur un foncteur
définissable s'étend au cas des foncteurs prodéfinissables 
-- sur un modèle donné, une telle topologie est donnée par une collection 
de sous-foncteurs {\em relativement définissables.} La compacité définissable
se définit de façon analogue dans ce cadre. 

On désigne toujours par $U$ un sous-foncteur $(k,G_0)$-définissable
d'un $k$-schéma de type fini $X$. Soit $F\in \mathsf M$, soit $V$
un ouvert de Zariski de $X_F$ et soit $f$ une fonction régulière sur $V$. 
On peut voir $\abs f$ comme une transformation $F$-définissable
de~$V$ vers~$\Gamma_{0,F}$ ; elle en induit une de $\widehat V$ vers
$\widehat \Gamma_{0,F}=\Gamma_{0,F}$, que l'on note encore $\abs f$. 
Soit~$D$ un ouvert~$F$-définissable de~$\Gamma_{0,F}$ ; le foncteur
$\abs f^{-1}(D)\cap \widehat U_F$ est un sous-foncteur relativement
$F$-définissable de $\widehat U_F$. On munit $\widehat U$ de la 
topologie définissable la plus grossière pour laquelle $\abs f^{-1}(D)\cap \widehat U_F$
est un ouvert relativement~$F$-définissable
de~$\widehat U_F$ pour tout~$(F,V,f,D)$ comme ci-dessus. Indiquons quelques propriétés de cette topologie. 

\begin{itemize}

\item[$\bullet$] Pour tout $F\in \mathsf M$, 
le plongement naturel~$U(F)\hookrightarrow \widehat U(F)$ est un homéomorphisme 
de $U(F)$ (muni de la topologie déduite de celle du corps valué $F$) sur son image, 
laquelle est dense. 

\item[$\bullet$] Si $X$ est géométriquement connexe alors $\widehat X$ est 
$(k^a,G_0^{\mathbb Q})$-connexe : il ne peut s'écrire comme une union disjointe de deux
ouverts $(k^a,G_0^{\mathbb Q})$-définissables non vides. Si $X$ est connexe et {\em si~$k$ est hensélien}
alors
$\widehat X$ est $(k,G)$-connexe.

\item[$\bullet$] Supposons que $X$ est séparé,
et qu'il existe une famille finie $(X_i)$ d'ouverts affines de $X$
telle que $U=\bigcup U_i$, où $U_i$ est un sous-foncteur $(k,G_0)$-définissable
de $X_i$ possédant les propriétés suivantes : 

\begin{itemize}

\item[$\alpha)$] $U_i$ est borné relativement à un plongement fixé 
de $X_i$ dans un espace affine ; 

\item[$\beta)$] $U_i$  est «naïvement fermé», c'est-à-dire 
qu'il peut être défini par une combinaison booléenne positive
d'inégalités 
de la forme $\abs f \leq \lambda \abs g$ où $f$ et $g$ sont des fonctions régulières
sur $X_i$, et où $\lambda\in G_0$. 
\end{itemize}

Sous ces hypothèses, $\widehat U$ est définissablement
compact. Notons que si $X$ est projective et possède un recouvrement
$(X_i)$ par des ouverts affines tels que $U\cap X_i$ soit
naïvement fermé pour tout $i$, 
les hypothèses ci-dessus sont satisfaites et $\widehat U$ est définissablement
compact. En particulier, $\widehat X$ est définissablement compact dès que $X$ est projective. 
\end{itemize}

\medskip
{\em La structure d'arbre de $\phat k$.} Identifions $\ahat k$ au foncteur $\overline B$ des boules fermées
(en envoyant une boule sur son type générique). 
Soit $F\in \mathsf M$ et soit $\lambda$ 
un élément de $F$.
On note $b(\lambda,.)_F$ la transformation naturelle~$\Gamma_{0,F}\to \ahat F$ qui, sur 
un modèle $F'$ donné dans $\mathsf M_F$, envoie un élément~$r$ de~$\abs {F'}$ 
sur~$b(\lambda,r)_{F'}$. Elle est $F$-définissable, et induit un homéomorphisme
entre $\Gamma_{0,F}$ et un sous-foncteur~$F$-définissable de $\ahat F$. 
Soit~$\mu$ un (autre) élément de $F$ ; posons~$r=\abs{\lambda-\mu}$. 
Soit $I$ le $F$-segment généralisé
obtenu en concaténant $[0;r]_F$ et $[r; 0]_F$. 
On définit alors un homéomorphisme $F$-définissable de $I$ sur un sous-foncteur
$F$-définissable de $\ahat F$ en appliquant $b(\lambda,.)_F$ sur le premier segment, 
et $b(\mu,.)_F$ sur le second. 

Ce « segment généralisé joignant $\lambda$ à $\mu$~» est le seul. Plus généralement, 
on démontre que si $x$ et $y$ appartiennent à $\phat k(F)$, il existe un unique sous-foncteur
$F$-définissable~$D$ de~$\phat F$ possédant la propriété suivante : {\em il existe un $F$-segment
généralisé $I$ et un homéomorphisme~$F$-définissable~$I\simeq D$ envoyant l'origine de $I$ sur $x$
et l'extrémité de~$I$ sur $y$.} Donnons une description informelle du foncteur~$D$ : 
si $x$ et $y$ appartiennent à $\ahat k(F)$, on fait croître 
le rayon de la boule $x$ jusqu'à rencontrer la boule $y$,
puis on le diminue jusqu'à obtenir exactement $y$. Si $x\in \ahat k(F)$ et
si $y=\infty$, 
on fait croître le rayon de la boule $x$ jusqu'à l'infini (et on paramètre par l'inverse du rayon, 
le point correspondant à $0$ étant alors $y$). On dira que $D$ est le {\em segment
généralisé joignant $x$ à~$y$.} 

On dispose d'une notion naturelle d'{\em enveloppe convexe}
$C$ d'une famille finie de points de~$\phat k (F)$ : c'est la réunion des segments généralisés les joignant
deux à deux. Il existe un homéomorphisme $F$-définissable d'un arbre fini sur $C$.

\medskip
Terminons cette section en mentionnant que si $U$ est comme ci-dessus et si $D$ est un 
sous-foncteur $(k,G_0)$-définissable de $\Gamma_0^n$, ou un segment généralisé et plus
généralement un graphe fini, on munit $\widehat U\times D$ de la topologie définissable produit. 

\subsection{Liens avec les espaces de Berkovich}

On suppose dans cette section que $G_0\subset {\mathbb R}_+$
et que $k$ est complet. Le corps $k$
est donc un {\em corps ultramétrique complet}. On désigne
toujours par $X$ un $k$-schéma de type fini, et par $U$ un sous-foncteur
$(k,G_0)$-définissable de $X$. Les inégalités qui décrivent
$U$ définissent sans ambiguïté une partie semi-algébrique $U^{\rm an}$
de $X^{\rm an}$. 

Soit $F\in \mathsf M$ un corps {\em ultramétrique} et
soit $x\in \widehat U(F)\subset \widehat X(F)$. Le point $x$ est 
un type sur $X_F$, c'est-à-dire un couple formé d'un point
$\xi$ du schéma $X_F$ et d'une valuation sur $F(\xi)$, 
prolongeant celle de $F$. Comme $x$ est stablement dominé, 
il est orthogonal à $\Gamma_0$, ce qui veut dire 
que $\abs {F(\xi)}\subset \abs F \subset  {\mathbb R}_+$. Soit $\xi_0$ l'image
de $\xi$ sur le schéma $X$. La valuation sur $k(\xi_0)$ induite
par celle de $F(\xi)$ est à valeurs réelles ; elle définit donc un point 
$\theta(x)$ de $X^{\rm an}$ supporté par $\xi_0$. L'appartenance de $x$ à $\widehat U(K)$
implique immédiatement que $\theta(x)\in U^{\rm an}$. 

Avant d'énoncer les propriétés fondamentales de l'application $\theta$, 
rappelons qu'un corps ultramétrique  $F$ est dit {\em maximalement complet} 
si toute famille décroissante de boules fermées de $F$ a une intersection non vide. 
Cela équivaut à demander que $F$ n'admette pas d'extension immédiate non triviale
(une extension valuée de $F$ est dite immédiate si elle a même corps résiduel
et même groupe des valeurs que $F$). Tout corps ultramétrique $F$ 
admet une extension 
maximalement complète $F'$ qui est algébriquement close, vérifie
l'égalité $\abs {F'}={\mathbb R}_+$, et a pour corps résiduel une clôture algébrique de 
$\tilde F$ ; une telle extension est essentiellement unique. Notons qu'un corps
maximalement complet est complet (considérer une famille décroissante
de boules dont le rayon tend vers $0$).

\begin{prop} Soit $F\in \mathsf M$ un corps 
ultramétrique. 

1) L'application $\theta : \widehat U(F)\to U^{\rm an}$ est continue ; si $F=k$, c'est un homéomorphisme
sur son image. 

2) Supposons que $F$ soit maximalement complet et que $\abs F={\mathbb R}_+$. 
L'application $\theta$ est alors
une surjection 
topologiquement propre, et un homéomorphisme si $F=k$.
\end{prop}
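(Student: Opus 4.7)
La continuité de $\theta$ est une conséquence immédiate des définitions. Les deux topologies en jeu sont engendrées par les préimages d'ouverts de l'axe des valeurs absolues sous des applications de la forme $\abs f$ (pour $f$ fonction rationnelle), et la construction de $\theta$ assure l'identité $\abs{f(\theta(x))}=\abs f (x)$ pour tout $x\in \widehat U(F)$ et tout tel $f$ défini au support de $x$.

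Pour l'injectivité de $\theta$ lorsque $F=k$, j'observe qu'un type stablement dominé $k$-définissable $t$ sur $U$ est déterminé par sa description comme couple $(\xi,\abs .)$ formé d'un point du schéma $U_k$ et d'une valuation sur $k(\xi)$ prolongeant celle de $k$ ; l'orthogonalité à $\Gamma_0$ (proposition~\ref{equivstabdom} et remarque~\ref{remstabdom}) garantit que $\abs{k(\xi)}\subset \abs k\subset \mathbb R_+$, de sorte que ce couple s'identifie précisément à un point de $U^{\rm an}$, à savoir $\theta(t)$. Le caractère homéomorphe sur l'image en résulte aussitôt : un ouvert de base $\abs f^{-1}(D)\cap \widehat U(k)$ correspond exactement à la trace sur l'image d'un ouvert de base analogue de $U^{\rm an}$.

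Pour la surjectivité lorsque $F$ est maximalement complet avec $\abs F=\mathbb R_+$, je considère un point $y\in U^{\rm an}$ de support $\xi_0\in X$ muni d'une valuation réelle $\abs .$ sur $k(\xi_0)$. La propriété d'injectivité universelle des corps ultramétriques maximalement complets algébriquement clos à groupe des valeurs égal à $\mathbb R_+$ permet de plonger $(k(\xi_0),\abs .)$ dans $F$ au-dessus de $k$ ; un tel plongement fournit un point $x\in U(F)\subset \widehat U(F)$ tel que $\theta(x)=y$.

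La propreté topologique est le point le plus délicat. Étant donné un compact $K\subset U^{\rm an}$, je le recouvre par une réunion finie de parties $U_i^{\rm an}$, où les $U_i$ sont des sous-foncteurs $(k,G_0)$-définissables bornés et naïvement fermés contenus dans des ouverts affines d'une compactification projective $\overline X$ de $X$ (cela repose sur un argument standard de compacité de $K$ vu dans $\overline X^{\rm an}$, lui-même compact). Les propriétés de la topologie énoncées juste avant la proposition assurent alors que chaque $\widehat{U_i}$ est définissablement compact. L'étape clé consiste à traduire cette compacité définissable en compacité topologique de $\widehat{U_i}(F)$ sous les hypothèses faites sur $F$ : l'idée est que, grâce à la maximale complétude de $F$ et à l'égalité $\abs F=\mathbb R_+$, tout ultrafiltre de parties fermées de $\widehat{U_i}(F)$ se raffine en un ultrafiltre de parties relativement $F$-définissables, lequel provient, via la variante pro-définissable de la remarque~\ref{compultraf}, d'un type $F$-définissable stablement dominé sur $U_i$ ; son unique point adhérent, fourni par la compacité définissable, appartient à l'intersection de l'ultrafiltre. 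La préimage $\theta^{-1}(K)$, étant fermée dans la réunion compacte $\bigcup \widehat{U_i}(F)$, est alors compacte. Enfin, dans le cas où $F=k$ est maximalement complet avec $\abs k=\mathbb R_+$, la bijection continue et propre $\theta$ entre espaces localement compacts séparés, obtenue en combinant 1) et 2), est automatiquement un homéomorphisme.
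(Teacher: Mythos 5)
Les points 1) et 2)-injectivité sont traités correctement et dans l'esprit du texte. En revanche, l'argument de surjectivité comporte un véritable trou. Vous cherchez à plonger $(k(\xi_0),\abs.)$ dans $F$ au-dessus de $k$, pour produire un type \emph{simple} $x\in U(F)$ d'image $y$. Or un tel plongement requiert en particulier que $\widetilde{k(\xi_0)}$ se plonge dans $\tilde F$ au-dessus de $\tilde k$, ce qui n'a aucune raison d'avoir lieu : dès que $F=k$ et que $y$ est, disons, le point de Gauss de ${\mathbb A}^{1,\rm an}_k$, le corps résiduel $\widetilde{k(\xi_0)}$ est de degré de transcendance $1$ sur $\tilde k$ et ne se plonge donc pas dans $\tilde k$. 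Autrement dit, l'image de $U(F)$ par $\theta$ ne couvre pas $U^{\rm an}$ en général, et il faut impérativement faire intervenir des types stablement dominés \emph{non simples}. C'est précisément à quoi sert le fait que le texte met en avant : pour $F$ maximalement complet avec $\abs F={\mathbb R}_+$, un type $x$ sur $X_F$ correspondant à $(\xi,\abs.)$ est stablement dominé dès que $\abs{F(\xi)}={\mathbb R}_+$, \emph{sans hypothèse préalable de $F$-définissabilité}. Étant donné $y\in U^{\rm an}$, on choisit alors n'importe quel point $\xi\in X_F$ au-dessus de $\xi_0$ et n'importe quelle extension de $\abs._0$ à $F(\xi)$ à valeurs réelles (possible par maximale complétude), et le type obtenu est automatiquement stablement dominé et s'envoie sur $y$.

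L'argument de propreté est lui aussi fragile. Vous passez d'un ultrafiltre de fermés de $\widehat{U_i}(F)$ à « un type $F$-définissable stablement dominé sur $U_i$ », mais c'est justement là que la difficulté se niche : l'ultrafiltre de parties relativement définissables donne un type sur le pro-définissable $\widehat{U_i}$ qui n'est pas $F$-définissable a priori, et vous ne justifiez pas ce passage (de plus, la formulation confond types sur $U_i$ et types sur $\widehat{U_i}$, qui sont des objets différents). Le texte indique une voie différente : la propreté s'obtient par un usage du théorème de Tychonoff — typiquement, on plonge $\widehat{U_i}(F)$ comme fermé dans un produit de segments de $\abs F={\mathbb R}_+$ via les fonctions $\abs f$, et c'est la complétude de Dedekind de ${\mathbb R}_+$ combinée à la maximale complétude de $F$ qui donne la fermeture. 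Votre raisonnement par ultrafiltres n'est pas absurde dans l'esprit, mais tel qu'il est écrit il omet exactement les deux ingrédients qui font marcher la preuve : la caractérisation non-définissable des types stablement dominés sur un corps maximalement complet, et la compacité effective de ${\mathbb R}_+$ (plutôt que la seule compacité définissable).
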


Faisons quelques commentaires. L'assertion 1) résulte immédiatement de la définition des
topologies sur $\widehat U(F)$ et sur $U^{\rm an}$. Disons à titre d'illustration
quelques mots de l'image de $\theta$
lorsque $F=k$ (ce qui implique
que $k$ est algébriquement clos,
que sa valuation n'est pas triviale, et que~$|k|=G_0$)
et lorsque $U=X={\mathbb A}^1_k$. L'application $\theta$ envoie 
par construction 
le point $\eta_{\lambda, r,k}\in \ahat k(k)$
sur la semi-norme $\sum a_i (u-\lambda)^i\mapsto \max \abs {a_i}r^i$. On voit donc
que son image consiste exactement en l'ensemble des points {\em de type $1$ ou $2$}
de~${\mathbb A}^{1,\rm an}_k$. Ce fait s'étend à toute courbe algébrique $X$ : 
l'image de~$\theta : \widehat X(k)\to  X^{\rm an}$ est l'ensemble de points de type $1$ ou $2$
de $X^{\rm an}$. Du point de vue de la définissabilité,
qui préside à l'ensemble de la construction de Hrushovski et Loeser, cela n'a rien 
de choquant : l'expérience montre que, tant qu'on s'intéresse à des problèmes de nature
algébrique ne faisant pas intervenir d'autres nombres réels que ceux de~$\abs k$, les points
de type 1 ou 2 sont précisément les seuls points en lesquels
~«il peut se passer quelque chose». Les autres 
servent à garantir de bonnes propriétés topologiques (compacité, connexité par arcs), mais ne détectent rien
d'intéressant. Se limiter aux points de type 1 ou 2 ne fait donc perdre pour l'essentiel aucune information 
-- simplement, il faut compenser les désagréments topologiques inhérents à 
cette approche
en remplaçant les propriétés classiques par leurs
avatars modèles-théoriques : compacité définissable, existence de segments généralisés définissables joignant
deux points, etc.

En ce qui concerne l'assertion 2), sa preuve repose essentiellement sur le fait suivant 
(qui garantit la surjectivité, la propreté découlant ensuite du théorème de Tychonoff 
convenablement utilisé). Soit $F$ un corps
maximalement complet, algébriquement clos
et tel que $\abs F={\mathbb R}_+$, soit $X$ un $F$-schéma 
de type fini et soit $x$ un type sur $X$. Il correspond à un 
couple $(\xi,\abs.)$ où $\xi$ est un point de $X$ et $\abs.$
une valuation 
sur $F(\xi)$. Le type $x$ est alors stablement dominé si et seulement
si $\abs{F(\xi)}=\mathbb R_+$. {\em Notons la différence absolument cruciale
avec la proposition~\ref
{equivstabdom} : nous n'avons pas supposé {\em a priori} que $x$ est $F$-définissable. }

\medskip
Cette application $\theta$ peut être utilisée pour 
transférer certaines applications chapeautées en géométrie de Berkovich, comme le montre
le théorème ci-dessous.
\begin{theo}\label{transferberk} Soit $U$ comme ci-dessus, et soit $D$ un foncteur $(k,G_0)$-définissable
vivant dans la sorte $\Gamma_0$ ; on suppose plus précisément que $D$ est muni d'une topologie définissable, 
et qu'il admet un plongement topologique $(k,G_0)$-définissable dans $\Gamma_0^n $ pour un certain $n$ 
(c'est par exemple le cas si $D$ est un segment généralisé). Soit $V$ un sous-foncteur
$(k,G_0)$-définissable d'une $k$-variété algébrique et soit $f: \widehat U \times D \to 
\widehat V$
une transformation naturelle $(k,G_0)$-définissable et {\em continue}. Soit~$F$ une extension ultramétrique complète
de $k$. On suppose que $F$ est algébriquement clos, maximalement complet, et que $\abs F={\mathbb R}_+$. Il existe alors une unique application continue 
$$f^{\rm an} : U^{\rm an}\times D({\mathbb R}_+)\to V^{\rm an}$$ telle que $$\diagram \widehat U(F)\times D({\mathbb R}_+)
\rto^{\;\;\;\;\;\;\;\;\;\;\;f} \dto_{(\theta,{\rm Id})}&\widehat V(F)\dto^\theta\\U^{\rm an}\times D({\mathbb R}_+)\rto^{\;\;\;\;\;\;\;f^{\rm an}}&V^{\rm an}\enddiagram$$ commute. 

\end{theo} 

\begin{rema}\label{preuvetransf} L'unicité et la continuité de 
$f^{\rm an}$
proviennent immédiatement
du caractère surjectif et propre de $\theta$. L'existence demande
un peu plus de travail : le diagramme commutatif indique comment la construire, 
mais il y a des choix d'antécédents à faire et il faut vérifier que le résultat n'en dépend pas. 
La définissabilité {\em et} la continuité de $f$ sont utilisées pour cette étape. 
\end{rema}

\subsection{Énoncé du théorème principal}

Commençons par quelques conventions. Soit $D$ un sous-foncteur~$(k,G_0)$-définissable de $\Gamma_0^n$. 
On fixe un modèle $F\in \mathsf M$ ; le plus grand entier $m$ tel qu'il existe un plongement définissable 
$\prod_{1\leq i\leq n}[a_i;b_i]_F\hookrightarrow D_F$, où les $a_i$ et les $b_i$ appartiennent à $\abs F$ et où $a_i<b_i$ pour tout
$i$, ne dépend pas de $F$ ; on l'appelle la {\em dimension} de $D$ (on la prend égale à $-\infty$ si $D$ est vide). 

Si $U$ est un foncteur $(k,G_0)$-définissable et $f: U\to D$ une transformation naturelle $(k,G_0)$-définissable, nous nous
permettrons souvent de noter encore $f$ la transformation naturelle $\widehat U\to \widehat D\simeq D$ induite par $f$. 

Nous allons maintenant énoncer le théorème principal de Hrushovski et Loeser, qui porte
sur la géométrie chapeautée ; les résultats annoncés de modération des espaces de Berkovich en découlent grâce
au théorème~\ref{transferberk} ci-dessus.

\begin{theo}\label{theoprinc} Soit $(k,G_0)$ un corps valué, soit $X$ une $k$-variété algébrique {\em quasi-projective}
et soit $U$ un sous-foncteur $(k,G_0)$-définissable de $X$. Soit $\mathscr F$ une famille finie de transformations 
naturelles $(k,G_0)$-définissables de $U$ vers $\Gamma_0$, et soit $\mathsf G$ un groupe fini agissant sur $X$ (par automorphismes
de $k$-schéma) et stabilisant $U$. Il existe : 

$\bullet$ un segment généralisé $(k,G_0)$-définissable $I$, d'origine $o$ et d'extrémité $e$ ; 

$\bullet$ une transformation naturelle $(k,G_0)$-définissable et {\em continue} $h: I\times \widehat U \to \widehat U$ ; 

$\bullet$ un sous-foncteur $(k,G_0)$-isodéfinissable $S$ de $\widehat U$ ; 

$\bullet$ un sous-foncteur $(k,G_0)$-définissable $P$ de $\Gamma_0^n$ (pour un certain $n$), de dimension majorée
par celle de $X$ ;  

$\bullet$ une extension finie $L$ de $k$ contenue dans $k^a$ et un homéomorphisme $(L,G_0)$-définissable $P\simeq S$
(nous résumerons l'existence de~$L$, de~$P$ et de l'homéomorphisme évoqué en disant simplement que~$S$ est un {\em polytope}),\par

\smallskip

\noindent tels que les propriétés suivantes soient satisfaites pour tout modèle 
\mbox{$F\in \mathsf M$,} tout \mbox{$x\in \widehat U(F)$} et
tout $t\in I(F)$. 

1) On a $h(e,x)\!\in\!S(F)$, 
et $h(t,x)\!=\!x$ dès que $x\in S(F)$. On a aussi les égalités~$h(e,h(t,x))\!=\!h(e,x)$ et~$h(o,x)=x$. Nous dirons
plus brièvement que $h$ est~{\em une homotopie d'image~$S$}.   

2) Si $f\in \mathscr F$ alors $f(h(t,x))=f(x)$. 

3) Pour tout $g\in \mathsf G$ on a $g(h(t,x))=h(t, g(x))$. 

\end{theo}

\begin{rema} Ce n'est pas seulement par souci de généralité que Hrushovski et Loeser ont cherché à imposer à leur homotopie $h$ de préserver les fonctions appartenant à $\mathscr F$ (propriété 2) et d'être équivariante (propriété 3) : même pour construire $h$ sans ces contraintes, leur stratégie requiert, au cours d'un raisonnement par récurrence, de savoir construire en dimension inférieure des homotopies auxiliaires qui satisfont ce type de conditions. 
\end{rema}

\begin{rema}\label{gallois} Il n'existe pas, en général,
d'homéomorphisme $(k,G_0)$-définissable entre $S$ et un sous-foncteur 
 $(k,G_0)$-définissable de $\Gamma_0^n$. La raison est que le type d'homotopie de $\widehat X$
 n'a aucune raison d'être invariant sous l'action de Galois. 
 Par exemple, le groupe de Galois
 peut permuter les composantes connexes de $\widehat X$, si $X$ est connexe mais pas géométriquement
 connexe. Il peut aussi agir de façon plus subtile sur la topologie de $\widehat X$. 
 Considérons
 par exemple le cas où $X$ est la courbe elliptique sur~${\mathbb Q}_3$ d'équation affine
 $y^2=x(x-1)(x-3)$. On peut alors construire une homotopie comme dans le théorème, 
 dont l'image $S$ est homéomorphe à un cercle
  (ou plus précisément à son avatar modèle-théorique dans DOAG). 
 Cet homéomorphisme est~${\mathbb Q}_3(i)$-définissable, mais n'est pas ${\mathbb Q}_3$-définissable : 
 la conjugaison agit en fixant deux points sur le cercle et en échangeant les deux demi-cercles correspondants. 
 
 En termes d'espaces de Berkovich, $X_{{\mathbb Q}_3(i)}^{\rm an}$ a le type d'homotopie d'un cercle
 (c'est une courbe de Tate déployée), mais $X^{\rm an}$ a celui du quotient d'un cercle par l'action
 précédemment décrite, c'est-dire d'un segment.  
 
\end{rema}

\section{Esquisse de la preuve}

\subsection{Rétractions par déformation de $\phat k$}

Soit $U$ le sous-foncteur $k$-définissable de
~${\mathbb P}^1_k$ défini par la condition~$\abs u\leq 1$, où $u$~est la 
fonction coordonnée ; soit $V$ le sous-foncteur $k$-définissable de
$U$ défini par la condition $\abs u<1$, et soit $W$ le foncteur
${\mathbb P}^1_k\setminus U$ ; la fonction $1/u$ induit un isomorphisme
$k$-définissable $\psi : W\simeq V$. 

Soit $F\in \mathsf M$. Soit $\lambda\in F\zero$ et soit 
$r\in \abs F\zero$. 
Pour tout~$t$ appartenant
à $\abs {F\zero}$, 
on pose ~$h(t,\eta_{\lambda, r,F})=\eta_{\lambda, \max (r,t), F}$. 
Notons que~$h(0,x)=x$ 
et~$h(1,x)=\eta_{0,1,F}$ pour tout~$x\in \widehat U(F)$ ; notons
aussi que~$h(t,x)$ appartient à $\widehat V(F)$ dès que~$x\in \widehat V(F)$ et que $t<1$. 
Si~$x$ appartient à $W(F)$ on pose~ $h(t,x)=\widehat \psi^{-1}(h(t,\widehat \psi(x)))$ si~$t<1$, 
et~$h(1,x)=\eta_{0,1,F}$. On vérifie aisément que l'on a ainsi construit une
homotopie $k$-définissable $h : [0\;;1]\times \phat k\to \phat k$, dont 
l'image est le (foncteur) singleton~$\{\eta_{0,1}\}$.

On fixe
un diviseur $D$ sur ${\mathbb P}^1_k$, sans multiplicités ; 
on le voit comme un sous-ensemble
fini et Galois-invariant de ${\mathbb P}^1(k^a)$. 
Soit $C_D$ l'enveloppe convexe de $D\cup\{\eta_{0,1}\}$. 
C'est un sous-foncteur $k$-définissable de $\phat k$ qui est un polytope. Notons
que l'action de Galois sur $C_D$ n'est pas forcément triviale 
(elle l'est si $D$ est constitu\'e de~$k$-points). On définit comme suit
une homotopie $k$-définissable~$h_D :  [0\;;1]\times \phat k\to \phat k$ d'image~$C_D$. Soit
$F\in \mathsf M$ et soit $x\in \phat k(F)$. On note $\tau(x)$ le plus petit élément $t$
de~$\abs {F\zero}$ tel que~$h(t,x)\in C_D(F)$ (remarquons que~$h(1,x)=\eta_{0,1,F}\in C_D(F)$). 
Soit~$t\in \abs{F\zero}$. On pose 
$$h_D(t,x)=h(t,x)\; \;{\rm si}\;\;t\leq \tau(x)\;\;{\rm et}\;\; h_D(t,x)=h(\tau(x),x)\;\;{\rm si}\;\;\tau(x)\leq t\leq 1.$$

Remarquons que $h_D(t,x)=x$ si $x\in D$ (car $D\subset C_D(F)$), et que $h_D(t,h_D(t',x))$ est
égal à~$h_D(t',x)$ si 
$t'\leq x$, et à $h_D(t,x)$ sinon.

\subsection{Relèvement à une courbe}\label{releve}

Soit $X$ une $k$-courbe algébrique projective
et soit $\phi: X\to {\mathbb P}^1_k$ un morphisme fini. 
Le but de ce qui suit est d'exhiber un diviseur $D$
sur ${\mathbb P}^1_k$ tel que l'homotopie~$h_D$
construite ci-dessus se relève de manière unique en une homotopie
de~$[0;1]\times \widehat X$ vers~$\widehat X$ 
d'image un polytope  de dimension $\leq 1$. 
Pour ce faire, il va être nécessaire de disposer 
d'un certain contrôle sur le cardinal des
fibres de~$\widehat \phi$. La condition cruciale à ce propos s'exprime plus naturellement
dans le cadre des courbes affines. On suppose donc (pour un moment) que~$X$ est 
une courbe {\em affine}, munie d'un morphisme fini~$\phi : X\to {\mathbb A}^1_k$. 

Soit $F\in \mathsf M$, et soit $x\in \ahat k(F)$. Écrivons $x=\eta_{\lambda, r,F}$ 
pour un certain $\lambda\in F$ et un certain~$r$ appartenant à $\abs F$ (remarquons que~$r$ 
est unique, contrairement
à~$\lambda$ : c'est le rayon de la boule dont~$x$ est le type générique). 
L'application $\mathsf n$ qui à un élément~$t$ de~$\abs F$ associe le cardinal 
de~$\widehat \phi_F^{-1}(\eta_{\lambda, t, F})$ est constante par morceaux, 
par définissabilité de~$\widehat X$ et $\ahat k$. 
On dit
~que $x$ est un point de {\em ramification extérieure} de $\widehat \phi_F$ si 
$r>0$ et si $\mathsf n(t)>\mathsf n(r)$ pour $t$ suffisamment proche supérieurement de $r$
(cela ne dépend pas du choix de $\lambda$). 

\begin{lemm}\label{decroigen} Le sous-foncteur $k$-définissable de $\ahat k$
qui associe à $F\in \mathsf M$
l'ensemble des points de ramification extérieure de $\widehat \phi_F$ est fini.
\end{lemm}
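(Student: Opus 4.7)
The plan is to show that the $k$-definable sub-functor $R$ of $\ahat k$ consisting of outer ramification points is in fact zero-dimensional, and hence automatically finite. Definability of $R$ is immediate from the ingredients at hand: by Proposition~\ref{prodef} both $\widehat X$ and $\ahat k$ are genuine $k$-definable sets (being of dimension one), so $\widehat\phi$ is a $k$-definable map with finite fibers, and the integer $\mathsf n(\lambda,t):=|\widehat\phi_F^{-1}(\eta_{\lambda,t,F})|$ is a $k$-definable $\mathbb N$-valued function of $(\lambda,t)$. The condition ``$r>0$ and $\mathsf n(\lambda,t)>\mathsf n(\lambda,r)$ for all $t$ in some right-neighborhood of $r$'' is first-order in $\mathscr L^*_{\rm val}$, so $R$ is $k$-definable.

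Two facts do the real work. First, $\mathsf n$ is bounded above by $d:=\deg\phi$: for $x\in\ahat k(F)$, each type in $\widehat\phi_F^{-1}(x)$ is the generic type of a distinct component of the preimage of the ball represented by $x$ under $\phi$, which has generic degree $d$, so there are at most $d$ such types. Second, any putative one-dimensional $k$-definable sub-functor of $\ahat k$ would, in some $F\in\mathsf M$, contain a radial arc $A_F=\{\eta_{\lambda,t,F}:t\in J\}$ for some fixed $\lambda\in F$ and some open interval $J\subset\abs F$; this relies on the tree structure of $\ahat k\simeq\overline B$ and the ``segment généralisé joignant $x$ à $y$'' description recalled just before the lemma, together with o-minimality in the value-group sort. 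Assuming $R$ had dimension one and contained such an arc, the outer-ramification condition would force the piecewise-constant function $t\mapsto\mathsf n(\lambda,t)$ to have a strict upward jump at every point of $J$; but a function bounded above by $d$ can have only finitely many upward jumps on an interval, a contradiction.

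Hence $R$ is zero-dimensional, and since it is $k$-definable, its cardinality in every $F\in\mathsf M$ is finite and uniformly bounded by the usual logical-compactness argument: if no uniform bound existed, the axioms ``$|R|\geq n$'' for $n\in\mathbb N$ would be finitely consistent and thus realized in some model with $R$ infinite, contradicting finiteness in that model. I expect the extraction of the radial arc to be the main technical obstacle: one must use the concrete description of $k$-definable subsets of $\ahat k\simeq\overline B$ furnished by the tree structure (any non-trivial one-parameter definable family of balls contains, after suitable reparametrization, a nested sub-family) and apply o-minimality inside the $\Gamma_0$-sort to fix a suitable center on a sub-arc. With this in hand, the rest of the argument is formal.
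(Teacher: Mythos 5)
Your key claim --- that any one-dimensional $k$-definable sub-functor of $\ahat k$ must contain, over some model $F$, a radial arc $\{\eta_{\lambda,t,F}: t\in J\}$ with fixed center $\lambda$ --- is not correct, and the argument collapses there. For a counterexample, fix $r$ with $0<r<1$ and consider the sub-functor sending $F$ to the set of all balls $b(\lambda,r)_F$ with $\lambda\in b(0,1)(F)$: this is an infinite, one-dimensional, $k$-definable sub-functor of $\ahat k$, yet the balls in question are pairwise disjoint and all of the same radius, so it contains no radial arc (and no nested sub-family either, contrary to your parenthetical claim). Nothing in the piecewise-constancy of $t\mapsto\mathsf n(\lambda,t)$ along individual radial arcs rules out that such a ``horizontal'' family consists entirely of outer ramification points, since the jump condition is imposed independently at each center $\lambda$.

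This horizontal configuration is exactly what the paper's argument is built to exclude, and it is why the generic-monotonicity step is indispensable. The paper proves, by a counting-of-types argument over successive extensions $F\subseteq F'\subseteq F''$ with carefully chosen realizations $a$ and $b$, that the generic type of a \emph{generic} sub-ball of radius $s<r$ of $b(\lambda,r)$ has at least as many preimages on $\widehat X$ as $\eta_{\lambda,r,F}$ does; equivalently, at a generic center the fiber count does not jump upward as the radius passes $r$ from below, so a generic point of any ball is not an outer ramification point. The Swiss-cheese description of $k$-definable sub-functors of the affine line (and hence of $\ahat k$) then reduces an arbitrary infinite definable locus to that generic situation. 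Your purely $\Gamma_0$-sort/o-minimality argument only controls the radial direction of the tree and omits this essential ingredient; the bound $\mathsf n\leq\deg\phi$ and the compactness argument at the end are fine but do not repair the gap.
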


Nous allons nous contenter de donner quelques indications de la preuve. 
L'idée, grossièrement exprimée, est de prouver que $\mathsf n$ a «~génériquement~»~tendance
à croître lorsque le rayon diminue ; des arguments de définissabilité 
spécifiques à la droite affine (et notamment la caractérisation 
des sous-foncteurs définissables de cette dernière comme des «~fromages suisses~»)
permettent ensuite 
de conclure
à la finitude du lieu des points de ramification extérieure de $\widehat \phi$ --  qui sont
en quelque sorte les points en lesquels on observe un comportement
non générique. 

Expliquons un peu plus avant ce que nous entendons lorsque nous disons
que $\mathsf n$~a génériquement tendance à croître lorsque le rayon diminue. 
Fixons $F\in \mathsf M$ et un point~$\eta_{\lambda, r,F}\in \ahat k(F)$ avec $r>0$. 
Soit $s$ un élément de $\abs F$ strictement inférieur à~$r$. Choisissons un 
modèle~$F'\supseteq F$ et une réalisation~$a\in F'$ 
du type~$\eta_{\lambda, r,F}$. 
Choisissons alors un modèle~$F''\supseteq F$ et une réalisation~$b\in F''$ du type
$\eta_{a,s,F'}$. L'élément $b$ est alors lui-même une réalisation
de  $\eta_{\lambda, r,F}$ (on a en effet~$|b-\mu|=r$ pour tout
élément~$\mu$ de~$F$ tel que~$|\lambda-\mu|\leq r$). 

Le nombre $N$ d'antécédents de $\eta_{\lambda, r, F}$ sur $\widehat X(F)$ est alors
égal au cardinal de l'ensemble~$E$ des types 
sur $X_{F(b)}$  admettant une réalisation $z$
sur $F''$ telle que $\phi(z)=b$ (en effet, si deux tels antécédents
sont discernables par une formule à paramètres
dans~$F(b)$, ils le sont par une formule à paramètres
dans~$F$ : il suffit de remplacer partout~$b$ par~$\phi(x)$ où~$x$ est la variable codant
un élément de $X$) ; pour la même raison,
le nombre $N'$ d'antécédents de 
$\eta_{a,s,F'}$ sur $\widehat X(F)$ est
égal au cardinal de l'ensemble $E'$ des types 
sur $X_{F'(b)}$  admettant une réalisation $z$
sur $F''$ telle que $\phi(z)=b$. On dispose d'une surjection 
naturelle de $E'$ vers $E$ (si deux antécédents de $b$ par 
$\phi$ ne peuvent être discernés par une formule à paramètres
dans $F'(b)$, ils ne peuvent {\em a fortiori} l'être par une formule
à paramètres dans $F(b)$). Par conséquent, $N'\geq N$. Autrement dit, 
on a établi l'assertion (informelle)
suivante, qui est celle que nous avions en vue : {\em le type générique d'une boule $F$-générique de rayon $s<r$ 
contenue dans $b(\lambda, r)$ a plus d'antécédents sur $\widehat X$ que le type générique 
de $b(\lambda,r)$.} 

\medskip
Revenons au problème initial, à savoir celui du relèvement des homotopies. 
On désigne donc à nouveau par $X$
une $k$-courbe algébrique projective 
munie d'un morphisme
fini $\phi: X\to {\mathbb P}^1_k$. Le choix d'une coordonnée 
$u$ sur ${\mathbb P}^1_k$ fournit deux cartes affines. Soit $D$ un diviseur 
sur ${\mathbb P}^1_k$ poss\'edant les propriétés suivantes : 

$\bullet$ la restriction de $\phi$ \`a $X\setminus\phi^{-1}(D)$ admet une factorisation $X\setminus \phi^{-1}(D)\to X'\to {\mathbb P}^1_k$ o\`u $X\setminus \phi^{-1}(D)\to X'$ 
est radiciel et o\`u $X'\to {\mathbb P}^1_k$ est \'etale ; 

$\bullet$ le polytope $C_D$ contient les points de ramification extérieure de 
$\widehat \phi$ au-dessus de chacune des deux cartes affines
de ${\mathbb P}^1_k$. 

Notons qu'un tel diviseur existe toujours en vertu du lemme~\ref{decroigen}. Nous allons alors
expliquer brièvement comment montrer qu'il existe une unique homotopie
$h'_{\phi,D}$ de~$[0;1]\times \widehat X$ vers $\widehat X$ relevant $h_D$. Fixons
$F\in \mathsf M$. 
Soit~$y\in \phat k (F)\setminus C_D(F)$
et soit~$x$ un antécédent de~$y$ sur~$\widehat X(F)$. 
Soit $\tau(y)$ le plus grand $t\in \abs{F\zero}$ tel que $h_D(\tau(y),y)=y$ ;
on a~$\tau(y)<1$ car~$y\notin C_D$. Il existe dès lors
un élément
$t_0\in \abs {F\zero}$ qui est strictement supérieur à~$\tau(y)$, 
et un voisinage~$\Omega$ de $x$ dans~$\widehat X(F)$ 
sur lequel tout point de la forme $h(t,y)$ avec $0<\tau(y)<t_0$ a
{\em au moins} un antécédent sur~$\Omega$ (on montre
en effet que~$\widehat \phi_F$ est ouvert). On peut 
de plus choisir $\Omega$ et $t$ de sorte que tout point de 
la forme $h(t,y)$ avec $0<\tau(y)<t_0$ ait
{\em exactement} un antécédent sur $\Omega$. En effet, si $\tau(y)>0$ cela provient du fait 
que $y$ n'est pas un point de ramification extérieure de $\widehat \phi_F$ (car tous ces points
appartiennent à $C_D(F)$), ce qui force le nombre d'antécédents de $h(t,y)$ à être (au plus)
égal au nombre d'antécédents
de $y$ lorsque $t$ est strictement supérieur à $\tau(y)$ et suffisamment proche de celui-ci. 
Et si $\tau(y)=0$ le point $y$ appartient à ${\mathbb P}^1_k(F)\setminus C_D(F)$ ; le point
$x$ appartient alors à $X(F)$. 
Par choix de $D$, la fl\`eche $X\to {\mathbb P}^1_k$ se d\'evisse, 
au voisinage de~$x$, en une fl\`eche radicielle (qui induit un hom\'eomorphisme entre les espaces chapeaut\'es sous-jacents) suivie d'une fl\`eche \'etale~; 
un avatar du théorème des fonctions implicites
assure alors que $\phi$ induit un homéomorphisme entre un voisinage de $x$ dans $\widehat X(F)$ et
un voisinage de $y$ dans $\phat k(F)$, d'où l'assertion. 

On peut alors définir $h'_{\phi,D}(x,t)$ pour tout $x\in X(F)$ et tout $t\in \abs{F\zero}$ : si $x$
appartient à~$\phi^{-1}(D)$, 
on pose~$h'_{\phi,D}(t,x)=x$ pour tout~$t$. Sinon, soit~$\tau$ le plus petit élément~$t$
de~$\abs{F\zero}$ tel que~$h(t,\phi(x))\in C_D(F)$. Nous avons établi ci-dessus une 
propriété «~d'unique 
relèvement de $h(.,x)$
dans le sens des temps croissants~». 
Celle-ci, combin\'ee \`a des arguments de compacit\'e d\'efinissables 
autorisant les «~passages \`a la limite \`a droite~»,  assure l'existence 
d'un unique rel\`evement d\'efinissable et continu du chemin
$$t\mapsto h_D(t,\phi(x)),\;\;\;0\leq t\leq \tau$$
en un chemin
$$t\mapsto h'_{\phi,D}(t,x), \;\;\;0\leq t\leq \tau$$
tel que 
$h'_{\phi,D}(0,x)=x$.   

On obtient ainsi une transformation naturelle continue et 
$k$-définissable $$h'_{\phi,D}: [0\;;1]\times X\to \widehat X.$$
En fait, $h'_{\phi,D}$ jouit d'une propriété de continuité {\em renforcée}, que nous ne détaillerons pas ici, 
et qui garantit son prolongement en une homotopie $k$-définissable
~$$h'_{\phi,D}: [0\;;1]\times \widehat X\to \widehat X.$$ 
Par construction, 
$h'_{\phi,D}$ relève $h_D$ ; son unicité découle de sa construction et de la densité de
$X$ dans $\widehat X$. Que $h'_{\phi,D}$ soit un polytope de dimension $\leq 1$ 
 résulte de la $k$-définissabilité de $\widehat X, \phat k$ et $\widehat f$, 
 du fait que $\widehat f$ 
est à fibres finies, et du fait que $C_D$ est un polytope de dimension $\leq 1$. 

\subsection{La récurrence : préliminaires généraux}

On note $n$ la dimension de $X$. Le théorème à démontrer est trivial si $n=0$ ; 
on suppose que $n>0$ et que le théorème est vrai en dimension $<n$. 

Par des méthodes élémentaires, 
on construit une variété projective équidimensionnelle
contenant $X$, de dimension $n$, et à laquelle l'action de $\mathsf G$
s'étend ; on peut donc supposer que $X$ est projective et
équidimensionnelle. On peut aussi, incluant 
dans la famille~$\mathscr F$
les valuations des fonctions utilisées dans la description de $U$, 
se ramener au cas où~\mbox{$U=X$~:} en effet, le théorème appliqué
dans le cas où~$U=X$ fournira une homotopie $I\times \widehat X\to \widehat X$ qui
préservera~$\widehat U$ (puisqu'elle préservera les fonctions\footnote{Les éléments de $\mathscr F$
sont {\em stricto sensu} des transformations naturelles ; nous nous permettrons
de les qualifier de fonctions.} appartenant à $\mathscr F$), 
donc induira une homotopie $I\times \widehat U\to \widehat U$ ayant les
propriétés voulues  (notons que son image~$S\cap \widehat U$ 
s'identifiera à un sous-foncteur $(L,G_0)$-définissable de $P$,
puisque $\widehat U$ est relativement~$(k,G_0)$-définissable dans $\widehat X$). 

Soit $f\in \mathscr F$. Sa définition fait intervenir un certain nombre de paramètres
appartenant à $G_0$. En autorisant ceux-ci à varier, on obtient une transformation
naturelle $k$-définissable $X\to \underline{{\rm Hom}}(\Gamma_0^m, \Gamma_0)$ 
pour un certain $m$, qu'il suffit de préserver le long des trajectoires de l'homotopie
pour que $f$ soit préservée. Or une telle transformation naturelle peut être identifiée
(en considérant les paramètres intervenant dans la description d'un
élément de  $\underline{{\rm Hom}}(\Gamma_0^m, \Gamma_0)$) à une transformation 
naturelle~$k$-définissable de~$X$ vers~$\Gamma_0^p$ pour un certain $p$ ; en la 
composant
avec les différentes projections, on obtient une famille finie de fonctions
$k$-définissables de $X$ vers $\Gamma_0$ dont la préservation entraîne celle de $f$. Par
ce procédé, on élimine les paramètres appartenant à $G_0$ ; autrement dit, on peut
supposer que $(k,G_0)=k$ (ou encore que~$G_0=\abs k$). La notion de définissabilité sur $k$ ne change 
pas si l'on remplace $k$
par son hensélisé, puis par sa clôture parfaite. On peut donc le supposer parfait
et hensélien. 

Toute fonction $f\in \mathscr F$
est {\em in fine}
décrite au moyen de la valuation de certains polynômes
homogènes (une fois $X$ plongé dans un espace projectif),
ce qui permet de supposer que les éléments de $\mathscr F$ sont 
des fonctions {\em continues}. 
On peut par ailleurs saturer $\mathscr F$
de façon à le rendre globalement $\mathsf G$-invariant puis, en remplaçant chaque~$\mathsf G$-orbite
 de~$\mathscr F$ par la liste des fonctions
qui la constituent {\em rangées dans l'ordre croissant}, 
que~$\mathscr F$ est constitué de fonctions 
individuellement~$\mathsf G$-invariantes. 
\subsection{La récurrence : préparation géométrique}

On peut supposer
$X$ réduite, donc génériquement lisse (le corps $k$ est parfait). 
Par des méthodes standard de géométrie algébrique,  {\em utilisant
de façon fondamentale le fait que $X$ est projective}, 
on montre l'existence d'un diagramme commutatif
$$\diagram X'\times_{{\mathbb P}^{n-1}_k}U\dto_\phi\rto&X'\ddto\rto &X\\
{\mathbb P}^1_k\times_kU\dto&&\\U\rto&{\mathbb P}^{n-1}_k&\enddiagram$$
et d'un diviseur~$D_0$ sur $X'$ tels que :

$\bullet$ la fl\`eche $X'\to X$ est un \'eclatement au-dessus d'un sous-sch\'ema 
ferm\'e de dimension nulle de $X$ ;

$\bullet$ le diviseur~$D_0$ est fini sur~${\mathbb P}^{n-1}_k$ ; 

$\bullet$ l'action de $\mathsf G$ se rel\`eve \`a $X'$, et stabilise $D_0$ ;

$\bullet$ le sch\'ema $U$ est un ouvert non vide de ${\mathbb P}^{n-1}_k$, et l'image r\'eciproque de ${\mathbb P}^{n-1}_k\setminus U$ sur $X'$ est un diviseur $\Delta$ de $X'$ ;

$\bullet$ la fl\`eche $X'\to {\mathbb P}^{n-1}_k$ est \'equivariante, et sa fibre g\'en\'erique est de dimension $1$ ;

$\bullet$ la fl\`eche $\phi$ est \'equivariante et finie ;

$\bullet$ le diviseur $D_0$ contient les diviseurs exceptionnels de l'\'eclatement $X'\to X$, ainsi que $f^{-1}(0)\cap Y$ pour toute composante irr\'eductible $Y$ de $X'$ et toute $f$ appartenant \`a l'image inverse de $\mathscr F$ sur $X'$ qui ne s'annule pas identiquement sur~$Y$ ; 

$\bullet$ il existe un morphisme étale et~$\mathsf G$-équivariant~$\pi : (X'\setminus D_0)\to {\mathbb A}^n_k$ (en particulier,
la variété~$X'\setminus D_0$ est lisse). 

%
%
%
%
%
%

\medskip
Il suffit alors de montrer le théorème pour $X'$, en rajoutant à la famille $\mathscr F$, ou plus précisément à 
son image réciproque sur $X'$, un certain nombre de fonctions continues
décrivant la réunion $Z$ des 
diviseurs exceptionnels de l'éclatement 
$X'\to X$ ; 
on modifie
$\mathscr F$ comme expliqué plus haut pour qu'elle soit constituée de fonctions
individuellement $\mathsf G$-invariantes. 
L'homotopie~$h'$ construite sur $\widehat {X'}$ stabilisera alors $\widehat Z$. Chacune des
composantes connexes de $\widehat Z$ s'envoyant sur un point (simple)
de $\widehat X$, on pourra descendre $h'$ et conclure. 
On peut donc supposer que $X'=X$.

\subsection{L'homotopie fibre à fibre}

On choisit une fonction coordonnée $u$ 
sur ${\mathbb P}^1_k \times_k U$. 

Soit $F$ une extension de $k$ 
et soit $x\in U(F)$ ; on notera les fibres en $x$ des divers
objets en jeu par un $x$ en indice. Par construction, la
flèche~$\phi_x : X_x\to {\mathbb P}^1_F$ est finie.
 On sait donc, d'après ce qui a été fait au~\ref{releve}, 
que pour tout diviseur~$D_x$ sur~$ {\mathbb P}^1_F$
contenant
l'image de~$D_{0,x}$
et suffisamment gros, l'homotopie $h_{D_x}$ admet un unique relevé $h'_{\phi,D_x}$ 
à~$\widehat{X_x}$. On peut même faire en sorte que ce relevé préserve les fonctions appartenant à $f$ : 
en effet, chacune d'elle est continue à valeurs
dans~$\Gamma_{0,F}$, et
on peut montrer qu'une telle fonction 
est localement constante en dehors d'un graphe fini 
contenu dans $\widehat{X_x}$ ; il suffit
alors de prendre $D_x$ assez gros pour que l'image réciproque de son enveloppe convexe 
contienne tous les «~graphes de variation~» des fonctions appartenant à $f$. L'unicité
de $h'_{\phi,D_x}$ 
et le fait que chaque fonction $f$ soit invariante par $\mathsf G$ garantissent
l'équivariance
de $h'_{\phi,D_x}$. 

Un raisonnement fondé sur la compacité permet de mettre les diviseurs $D_x$ en famille. 
Plus précisément, il existe, 
quitte à restreindre~$U$, 
un diviseur $D$ sur ${\mathbb P}^1_k\times_k U$,
contenant l'image de~$D_0\cap (X\times_{{\mathbb P}^{n-1}_k}U)$ 
et qui possède les propriété suivantes : {\em le morphisme $D\to U$ est fini, 
et pour toute extension $F$ 
de $k$ et tout $F$-point $x$ de $U$, le diviseur $D_x$ est tel que $h_{D_x}$ admette un unique relevé
$h'_{\phi,D_x}$ à $\widehat{X_x}$, préservant les fonctions appartenant à $\mathscr F$ et commutant
à l'action de $\mathsf G$.} 

Notons $\widehat{X/{\mathbb P}^{n-1}_k}$ le sous-foncteur (relativement $k$-définissable) de $\widehat X$ qui envoie
un modèle $F$ sur l'ensemble des points de $\widehat X(F)$ situés au-dessus d'un point {\em simple} de~$\phat k(F)$. 
Soit $h_c$ la transformation naturelle $k$-définissable
$$[0\;;1]\times X\to \widehat{X/{\mathbb P}^{n-1}_k}$$ définie comme suit. Soit $F\in \mathsf M$, soit $y\in X(F)$, soit
$x$ son image sur ${\mathbb P}^{n-1}_k(F)$ et soit $t\in \abs {F\zero}$. Si $y\in \Delta(F)$, 
on pose $h_c(t,y)=y$ ; sinon, on pose 
$$h_c(t,y)=h'_{\phi, D_x}(t,y)\in \widehat {X_x}(F)\subset \widehat{X/{\mathbb P}^{n-1}_k}(F).$$

La transformation naturelle $k$-d\'efinissable $h_c$ a été définie de façon beaucoup trop brutale pour être continue. 
Elle l'est toutefois
lorsqu'on la restreint à 
\mbox{$[0\;;1]\times ((X\!\setminus\!\Delta)\cup D_0)$.} Elle jouit même sur ce domaine d'une
propriété de continuité renforcée (que nous avons déjà eu l'occasion d'évoquer à la fin 
du~\ref{releve}) qui assure qu'elle s'étend en une homotopie 
$$h_c :  [0\;;1]\times \widehat{(X\setminus \Delta)\cup D_0}\to  \widehat{(X\setminus \Delta)\cup D_0}$$ qui préserve ce qui doit l'être. 
L'image $\Upsilon$ de $h_c$ est un {\em polytope relatif} sur $\widehat{{\mathbb P}^{n-1}_k}$
de dimension relative $\leq 1$.

\begin{rema} Au-dessus de $\widehat{{\mathbb P}^{n-1}_k}-\widehat U$ on dispose par
construction d'un isomorphisme entre $\Upsilon$ et $\widehat D_0$ ; le polytope relatif 
~$\Upsilon$ est donc à fibres finies au-dessus de~$\widehat{{\mathbb P}^{n-1}_k}-\widehat U$. 
Ailleurs, ses fibres sont en général de dimension $1$.

\end{rema}

\subsection{L'homotopie de la base}

Le but est maintenant de construire une homotopie $J\times \Upsilon\to \Upsilon$
dont l'image soit un polytope $S$
de dimension $\leq n$,
qui préserve les fonctions $f\in \mathscr F$
et soit ${\mathsf G}$-équivariante. L'idée consiste à exhiber une homotopie
de $\widehat{{\mathbb P}^{n-1}_k}$ qui se relève de manière convenable 
à $\Upsilon$. Or ce problème relativement polytopal est en fait contrôlé
algébriquement par un morphisme fini, comme le montre la proposition
suivante (qui vaudrait pour tout polytope
relatif, la forme précise de~$\Upsilon$ importe peu). 

\begin{prop} Il existe
un revêtement fini quasi-galoisien $T\to {\mathbb P}^{n-1}_k$, et une famille finie
$\mathscr G$ de fonctions $k$-définissables de $T$ vers $\Gamma_0$ telle
que pour toute homotopie $$h: I\times \widehat{{\mathbb P}^{n-1}_k}\to \widehat{{\mathbb P}^{n-1}_k},$$
les assertions suivantes soient équivalentes : 

a) l'homotopie $h$ admet un relèvement à $\Upsilon$, préservant les fonctions appartenant à~$\mathscr F$
et $\mathsf G$-équivariant ; 

b) l'homotopie $h$ admet un relèvement à $\widehat T$ préservant les fonctions appartenant à~$\mathscr G$. 
\end{prop}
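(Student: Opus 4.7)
Le plan est de choisir le revêtement $T\to {\mathbb P}^{n-1}_k$ de façon à {\em trivialiser} les données algébriques qui définissent le polytope relatif~$\Upsilon$, puis à encoder par une famille finie $\mathscr G$ de fonctions~$k$-définissables~$T\to \Gamma_0$ toute l'information \emph{continue} (fonctions de $\mathscr F$, positions des sommets du squelette fibre-à-fibre, points de rupture) que l'on veut préserver. L'équivalence a)~$\Longleftrightarrow$~b) dit alors, moralement, que l'on a enfermé dans le couple $(T,\mathscr G)$ toute la combinatoire de $(\Upsilon,\mathscr F)$.

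D'abord, je choisirais $T\to {\mathbb P}^{n-1}_k$ comme une clôture quasi-galoisienne commune du diviseur $D\to U\subset {\mathbb P}^{n-1}_k$, du diviseur $D_0\to {\mathbb P}^{n-1}_k$ et de l'action de $\mathsf G$, de sorte que les images réciproques de $D$ et de $D_0$ sur~$T$ se décomposent en sections disjointes sur un ouvert cofinis en codimension nulle. Grâce à cela, la combinatoire de la fibre~$\Upsilon_x$ (convexe de~$D_x\cup\{\eta_{0,1}\}$ dans $\widehat{{\mathbb P}^1_{F,x}}$, rehaussée ensuite à~$\widehat{X_x}$ {\em via} l'unique relevé $h'_{\phi,D_x}$) devient \emph{constante} le long de $T$. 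Je prendrais alors pour $\mathscr G$ la famille constituée~: des valuations des équations locales définissant $D$ et~$D_0$ en chacune de leurs sections~; des fonctions encodant les rayons et les pentes des «~graphes de variation~» de chaque $f\in \mathscr F$ sur les fibres de $\phi$ (ces graphes étant, on le sait, des arbres finis contenus dans $\widehat{X_x}$)~; et du discriminant de~$\phi$. L'invariance de ces quantités le long d'une homotopie de~$\widehat T$ entraîne automatiquement l'invariance du polytope ambiant $C_{D_x}$ et de la stratification imposée par $\mathscr F$ dans chaque fibre.

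Pour l'implication \mbox{b)~$\Longrightarrow$~a)}, partant d'un relèvement $\tilde h$ de~$h$ à $\widehat T$ préservant $\mathscr G$, je construirais fibre à fibre un relèvement de $\tilde h$ à $\Upsilon_T:=\Upsilon\times_{\widehat{{\mathbb P}^{n-1}_k}}\widehat T$ en déplaçant de manière rigide chaque point de~$\Upsilon_{T,t}$ selon la recette dictée par les sections de $D$ et les données de $\mathscr G$ qui en fixent la combinatoire~; la continuité renforcée évoquée au paragraphe~\ref{releve} assure la continuité globale du résultat. On descend alors ce relèvement à $\Upsilon$ en utilisant le caractère quasi-galoisien de $T\to{\mathbb P}^{n-1}_k$ : l'unicité du relèvement fibre à fibre (lemme~\ref{decroigen} appliqué à $\phi_x$ et choix de $D_x$ contenant les points de ramification extérieure) garantit que la construction est invariante sous le groupe de Galois, et l'équivariance sous $\mathsf G$ provient du fait que $\mathsf G$ a été incorporé au départ dans la construction de~$T$ et que les fonctions de $\mathscr G$ ont été rendues $\mathsf G$-invariantes. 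L'implication réciproque \mbox{a)~$\Longrightarrow$~b)} est plus formelle~: on tire en arrière un relèvement équivariant le long de $\widehat T\to \widehat{{\mathbb P}^{n-1}_k}$ et l'on vérifie que la préservation de~$\mathscr F$ sur $\Upsilon$ se traduit, par définition même de $\mathscr G$, par celle de $\mathscr G$ sur $\widehat T$.

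Le point délicat est l'étape de descente dans \mbox{b)~$\Longrightarrow$~a)}. D'une part, il faut s'assurer que le relèvement construit sur $\Upsilon_T$ ne dépend pas des choix auxiliaires de sections faites sur $T$, donc qu'il se descend bien en un relèvement \emph{univoque} à $\Upsilon$~; d'autre part, la continuité le long de $\widehat{{\mathbb P}^{n-1}_k}$ tout entier (y compris aux points de branchement de $T\to {\mathbb P}^{n-1}_k$ et aux points de $\widehat{{\mathbb P}^{n-1}_k}\setminus\widehat U$, où la structure de $\Upsilon$ dégénère en le foncteur fini $\widehat{D_0}$) doit être vérifiée par un argument de compacité définissable et de densité des types simples analogue à ceux déjà employés au paragraphe~\ref{releve}. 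Les arguments de continuité renforcée, combinés à la propriété d'unicité des relevés $h'_{\phi,D_x}$, sont ce qui rend cette descente possible.
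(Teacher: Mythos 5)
Le texte de Ducros, étant un exposé de séminaire Bourbaki, n'offre aucune démonstration de cette proposition : elle y est énoncée comme une boîte noire, avec pour seule indication qu'un polytope relatif est «~contrôlé algébriquement par un morphisme fini~» et que le résultat vaut pour tout polytope relatif, indépendamment de la forme précise de~$\Upsilon$. Votre esquisse développe cette indication dans une direction raisonnable : le choix de~$T$ comme revêtement quasi-galoisien trivialisant les données algébriques (sections de $D$ et de $D_0$, action de $\mathsf{G}$), l'encodage dans $\mathscr{G}$ des données continues (valuations d'équations locales, rayons et pentes des graphes de variation des $f\in\mathscr{F}$, discriminant de~$\phi$), et la construction d'un relevé sur $\Upsilon_T:=\Upsilon\times_{\widehat{{\mathbb P}^{n-1}_k}}\widehat T$ suivie d'une descente sont dans l'esprit voulu, et les renvois à la continuité renforcée, à la compacité définissable et à l'unicité des relevés du~\S\ref{releve} (lemme~\ref{decroigen}) sont légitimes pour une esquisse.

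En revanche, votre traitement de l'implication \mbox{a)~$\Rightarrow$~b)} laisse un véritable trou. Vous écrivez qu'on «~tire en arrière un relèvement équivariant le long de $\widehat T\to\widehat{{\mathbb P}^{n-1}_k}$~» ; mais un relèvement de $h$ à $\Upsilon$ est une transformation de~$\Upsilon$ au-dessus de~$h$, et $\Upsilon$ et $\widehat T$ sont deux fibrations de natures différentes sur $\widehat{{\mathbb P}^{n-1}_k}$ : tirer en arrière ce relèvement le long de $\widehat T\to\widehat{{\mathbb P}^{n-1}_k}$ produit une transformation de $\Upsilon\times_{\widehat{{\mathbb P}^{n-1}_k}}\widehat T$, pas de $\widehat T$ lui-même. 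Pour conclure dans cette direction il faudrait, soit exhiber une partie isodéfinissable de~$\Upsilon$ (typiquement le lieu des «~sommets~» des fibres polytopales) qui s'identifie au-dessus d'un ouvert dense à $\widehat T$ et qui est stabilisée par tout relèvement préservant~$\mathscr{F}$ et $\mathsf{G}$-équivariant, soit montrer directement que la préservation de $\mathscr{F}$ le long d'un tel relèvement impose une permutation continue et définissable des feuillets de~$\widehat T$ au-dessus de chaque trajectoire de $h$. C'est ce lien — le squelette combinatoire des fibres de $\Upsilon$ se reflète dans les feuillets de $\widehat T$ — qui constitue le cœur de l'équivalence, et c'est précisément ce qui n'est pas argumenté dans votre esquisse, où cette direction est présentée comme «~plus formelle~» alors qu'elle est en réalité la plus délicate.
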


{\it En vertu de l'hypothèse de récurrence}, il existe une homotopie $h'$ sur $\widehat T$ préservant les fonctions 
appartenant à $\mathscr G$, commutant à l'action de $\mathsf H:={\rm Gal}(T/{\mathbb P}^{n-1}_k)$, et 
dont l'image est un polytope de dimension $\leq n-1$. On note
$J$ le segment généralisé où vit son paramètre temporel. 

Comme $(T/\mathsf H)\to {\mathbb P}^{n-1}_k$ est radiciel,
il induit un homéomorphisme entre les espaces chapeautés sous-jacents. 
L'homotopie~$h'$ descend dès lors
en 
une homotopie $$h : J\times  \widehat{{\mathbb P}^{n-1}_k}\to \widehat{{\mathbb P}^{n-1}_k}$$
dont l'image est un polytope de dimension $\leq n-1$.
Par construction, $h$ satisfait l'assertion b) de la proposition ci-dessus, 
et partant l'assertion a). 
On obtient ainsi une homotopie 
$h_b : J\times \Upsilon \to \Upsilon$. Comme $h_b$ relève $h$, 
l'image de $h_b$ est relativement polytopale \`a fibres de dimension $\leq 1$ sur un polytope de dimension $\leq n-1$, et est de ce fait un polytope de dimension $\leq n$.

\subsection{L'homotopie d'inflation}
La concaténation $h_b\diamond h_c$ de $h_b$ et $h_c$ (on commence par $h_c$, puis on applique $h_b$) 
définit une homotopie $$([0\;;1]\diamond J)\times 
 \widehat{(X\setminus \Delta)\cup D_0}\to  \widehat{(X\setminus \Delta)\cup D_0}$$ (on note aussi $\diamond$ la concaténation des segments
 généralisés), dont l'image $S$ est un polytope. 
 
 Pour obtenir une homotopie définie sur $\widehat X$ tout entier, on a recours à une homotopie dite d'{\em inflation} $h_{\rm inf} : J'\times \widehat X\to \widehat X$
 qui possède la propriété suivante : pour tout $F\in \mathsf M$, pour tout $t\in J'(F)$ et tout $x\in \widehat X(F)$, on a $h(t,x)=x$ si $x\in \widehat{D_0}(F)$ et $h(t,x)\notin \widehat{\Delta}(F)$ 
 si~$x\notin \widehat{D_0}(F)$ et si $t$ n'est pas l'origine de $J'$. On exige en outre que $h_{\rm inf}$ préserve les fonctions appartenant à $\mathscr F$ et soit $\mathsf G$-équivariante. 
 
 \medskip
 Disons quelques mots sur la façon dont cette homotopie $h_{\rm inf}$ est construite.  
Par choix du diviseur~$D_0$, il existe un morphisme étale et~$\mathsf G$-équivariant 
$\pi : X\setminus D_0\to {\mathbb A}^n_k$. Comme~$\pi$
est étale, il induit pour tout~$F\in \mathsf M$
et tout~$x\in X(F)\setminus D_0(F)$ un homéomorphisme d'un voisinage de~$x$ dans~$\widehat X(F)\setminus \widehat D_0(F)$
sur un ouvert de~$\widehat{{\mathbb A}^n_k}(F)$. On procède maintenant comme suit. 

\medskip
$\bullet$ On construit une homotopie~$h_0 : J'\times {\mathbb A}^n_k\to \widehat{{\mathbb A}^n_k}$ consistant peu ou prou, comme en dimension $1$,
à «~faire croître le rayon des boules». À l'exception de son origine, tous les points d'une trajectoire de~$h_0$
sont Zariski-génériques : si~$F\in \mathsf M$, si~$x\in {\mathbb A}^n (F)$ et si~$Z$ est un fermé de Zariski strict de~${\mathbb A}^n_F$
alors~$h_0(t,x)\notin \widehat Z(F)$ si~$t$ n'est pas l'origine de~$J'$. 

$\bullet$ La propriété topologique du morphisme~$\pi$ que nous avons mentionnée ci-dessus permet de construire
«un relevé partiel de~$h_0$ à~$X\setminus D_0$ avec temps d'arrêt», c'est-à-dire une homotopie~$h_1 : J'\times (X\setminus D_0) \to \widehat{ (X\setminus D_0)}$
possédant la propriété suivante : pour tout~$F\in \mathsf M$ et tout~$x\in X(F)\setminus D_0(F)$, il existe un «temps d'arrêt»~$\tau(x)$ différent
de l'origine de~$J'$ tel
que~$\pi(h_1(t,x))=h_0(t,\pi(x))$ pour~$t\leq \tau(x)$ et~$h_1(t,x)=h_1(\tau(x),x)$ pour~$t\geq \tau(x)$. 
À l'exception de son origine, tous les points d'une trajectoire de~$h_1$
sont Zariski-génériques (en particulier, ils ne sont pas situés sur~$\widehat \Delta$). 

\medskip
De plus on peut, en diminuant éventuellement les temps d'arrêt, faire en sorte : 

- que~$h_1$ soit~$\mathsf G$-équivariante (car~$\pi$ est~$\mathsf G$-équivariant) ; 

- qu'elle préserve les fonctions appartenant à~$\mathscr F$ (car celles-ci, étant 
par construction continues et 
\`a lieu des z\'eros ouvert en dehors de $D_0$, sont constantes au voisinage de tout point {\em simple} non situé sur $D_0$) ; 

- qu'elle se prolonge par continuité en une homotopie~$h_{\rm inf}: J'\times X\to \widehat X$, qui fixe~$\widehat D_0$ point par point en tout temps (il
suffit de faire tendre le temps d'arrêt vers l'origine de~$J'$ lorsqu'on se rapproche de~$\widehat D_0$).

\medskip
L'homotopie $h_{\rm inf}$ ainsi obtenue jouit de la propriété de continuité renforcée que nous avons
plusieurs fois évoquée, qui permet de l'étendre en une homotopie, encore notée~$h_{\rm inf}$, 
qui va de~$J'\times \widehat X$ vers~$\widehat X$ et possède les propriétés requises.

\subsection{L'homotopie polytopale}

On considère la concaténation $h_b\diamond h_c\diamond h_{\rm inf}$. C'est une transformation naturelle~$k$-définissable
 de $I\times \widehat X$ vers $\widehat X$, où $I$ est un segment généralisé, qui a presque toutes les propriétés requises, à une exception près : son image $\Sigma$ 
 est bien un polytope
 (c'est par construction une partie $k$-définissable 
 du polytope image de $h_b\diamond h_c$), mais {\em $\Sigma$ n'est plus nécessairement fixée point par point au cours du temps} : l'homotopie $h_{\rm inf}$ a semé la pagaille. Pour remédier
 à ce problème, Hrushovski et Loeser introduisent une quatrième homotopie $h_{\rm pol}$, qui est essentiellement construite dans le monde polytopal, par un procédé relativement
 technique que nous ne décrirons pas ici. La concaténation $$h_{\rm pol}\diamond h_b\diamond h_c\diamond h_{\rm inf}$$ répond alors aux conditions du théorème.


\end{document}